\definecolor{darkgreen}{rgb}{0.0, 0.6, 0.13}
\newtheorem{thm}{Theorem}[section]
 \newtheorem{lem}[thm]{Lemma}
 \newtheorem{prop}[thm]{Proposition}
 \theoremstyle{definition}
 \newtheorem{df}[thm]{Definition}
 \theoremstyle{remark}
 \newtheorem{rem}[thm]{Remark}
 \numberwithin{equation}{section}
\newcommand{\Ab}{\mathbb A}
\newcommand{\Bb}{\mathbb B}
\newcommand{\Cb}{\mathbb C}
\newcommand{\Db}{\mathbb D}
\newcommand{\Eb}{\mathbb E}
\newcommand{\Mb}{\mathbb M}
\newcommand{\Pb}{\mathbb P}
\newcommand{\Qb}{\mathbb Q}
\newcommand{\Rb}{\mathbb R}
\newcommand{\Tb}{\mathbb T}
\newcommand{\Zb}{\mathbb Z}
\newcommand{\Ac}{\mathcal A}
\newcommand{\Bc}{\mathcal B}
\newcommand{\Cc}{\mathcal C}
\newcommand{\Dc}{\mathcal D}
\newcommand{\Ec}{\mathcal E}
\newcommand{\Gc}{\mathcal{G}}
\newcommand{\Hc}{\mathcal H}
\newcommand{\Ic}{\mathcal I}
\newcommand {\Jc}{\mathcal{J}}
\newcommand{\Kc}{\mathcal K}
\newcommand{\Lc}{\mathcal L}
\renewcommand{\Mc}{\mathcal M}
\newcommand{\Nc}{\mathcal N}
\newcommand{\Oc}{\mathcal O}
\newcommand{\Pc}{\mathcal P}
\newcommand{\Qc}{\mathcal Q}
\newcommand{\Rc}{\mathcal R}
\newcommand{\Tc}{\mathcal T}
\newcommand{\Uc}{\mathcal U}
\newcommand{\Vc}{\mathcal V}
\newcommand{\Xc}{\mathcal X}
\newcommand{\Bs}{\mathscr B}
\newcommand{\Cs}{\mathscr C}
\newcommand{\Ds}{\mathscr D}
\newcommand{\Es}{\mathscr E}
\newcommand{\Fs}{\mathscr F}
\newcommand{\Is}{\mathscr I}
\newcommand{\Ls}{\mathscr L}
\newcommand{\Os}{\mathscr O}
\newcommand{\Ps}{\mathscr P}
\newcommand{\Rs}{\mathscr R}
\newcommand{\Ss}{\mathscr S}
\newcommand{\Df}{\mathfrak D}
\newcommand{\Lf}{\mathfrak L}
\newcommand{\Rf}{\mathfrak R}
\newcommand{\Sf}{\mathfrak S}
\newcommand{\Xf}{\mathfrak X}
\newcommand{\Zf}{\mathfrak Z}
\newcommand{\bff}{\mathfrak b}
\newcommand{\cf}{\mathfrak c}
\newcommand{\ef}{\mathfrak e}
\newcommand{\ff}{\mathfrak f}
\newcommand{\lf}{\mathfrak l}
\newcommand{\mf}{\mathfrak m}
\newcommand{\nf}{\mathfrak n}
\newcommand{\pf}{\mathfrak p}
\newcommand{\qf}{\mathfrak q}
\newcommand{\rf}{\mathfrak r}
\newcommand{\dirac}{\boldsymbol{\delta}}
\newcommand{\vsigma}{\boldsymbol{\sigma}}
\newcommand{\vlambda}{\boldsymbol{\lambda}}
\begin{document}
\title{Propagation of chaos and higher order statistics in wave kinetic theory}
\author{Yu Deng}
\address{\textsc{Department of Mathematics, University of Southern California, Los Angeles, CA, USA}}
\email{\texttt{yudeng@usc.edu}}
\author{Zaher Hani}
\address{\textsc{Department of Mathematics, University of Michigan, Ann Arbor, MI, USA}}
\email{\texttt{zhani@umich.edu}}
\maketitle
\begin{abstract}
This manuscript continues and extends in various directions the result in \cite{DH}, which gave a full derivation of the wave kinetic equation (WKE) from the nonlinear Schr\"odinger (NLS) equation in dimensions $d\geq 3$. The wave kinetic equation describes the effective dynamics of the second moments of the Fourier modes of the NLS solution at the \emph{kinetic timescale}, and in the \emph{kinetic limit} in which the size of the system diverges to infinity and the strength of the nonlinearity vanishes to zero, according to a specified \emph{scaling law}.

Here, we investigate the behavior of the joint distribution of these Fourier modes and derive their effective limit dynamics at the kinetic timescale. In particular, we prove \emph{propagation of chaos} in the wave setting: initially independent Fourier modes retain this independence in the kinetic limit. Such statements are central to the formal derivations of all kinetic theories, dating back to the work of Boltzmann (Stosszahlansatz). We obtain this by deriving the asymptotics of the higher Fourier moments, which are given by solutions of the wave kinetic heirarchy (WKH) with factorized initial data. {As a byproduct, we also provide a rigorous justification of this hierarchy for general (not necessarily factorized) initial data.}

We treat both Gaussian and non-Gaussian initial distributions. In the Gaussian setting, we prove \emph{propagation of Gaussianity} as we show that the asymptotic distribution retains the Gaussianity of the initial data in the limit. In the non-Gaussian setting, we derive the limiting equations for the higher order moments, as well as for the density function (PDF) of the solution. Some of the results we prove were conjectured in the physics literature, others appear to be new. This gives a complete description of the statistics of the solutions in the kinetic limit.

\end{abstract}
\maketitle
\section{Introduction} 
 
Propagation of chaos is a central theme in all kinetic theories in statistical physics. Roughly speaking, it states that for a microscopic system with many interacting objects (particles or waves), two distinct objects should be statistically independent in the kinetic limit. Of course, this independence is not true before taking the limit, even if it is true at initial time, because naturally the dynamics produces correlations between the objects. Nonetheless, the fact that this independence is resurrected in the limit is a cornerstone of the whole kinetic description, in both particle and wave kinetic theories. In fact, almost all formal derivations of kinetic models, dating back to founding work of Boltzmann, \emph{assume} propagation of chaos to hold in order to get a closed kinetic equation for the lowest nontrivial marginal or moment of the solution. 

\smallskip

Mathematically speaking, propagation of chaos can be phrased in terms of the asymptotics of appropriate correlations or joint distributions of the solution. In wave kinetic theory, also called wave turbulence theory, these are given by the (second and higher order) moments of the Fourier modes of the solution to the dispersive equation that describes the microscopic system. If $u(t)$ is this solution, the second moment $\mathbb E|\widehat u(t, k)|^2$ is the central quantity whose asymptotics, in the kinetic limit, is given by the \emph{wave kinetic equation} (WKE), which acts as the wave analog of Boltzmann's equation. The formal derivations of this equation in the physics literature, dating back to the pioneering works of Peierls, Hasselman, and others \cite{Peierls,Hass1, Hass2, Nazarenko,NNB,ZLFBook}, are based on the unjustified assumption of propagation of chaos, which effectively allows to represent higher order mixed moments by products of second order ones, thus yielding a closed equation for the second moments. 

A rigorous derivation of the WKE at the kinetic timescale, starting from the nonlinear Schr\"odinger (NLS) equation with random initial data, has been given in our recent work \cite{DH}. This is the first result of its kind for any dispersive system (we will review some of the literature below). The derivation is done via a delicate analysis of the iterates of the NLS equation and their second order correlations, which are represented by ternary trees (and couples of such trees) often called Feynman diagrams. The analysis of such diagrams involves (a) identifying the leading order diagrams called \emph{regular couples}, (b) proving that all remaining diagrams lead to negligible contributions, and (c) controlling the remainder term in the iteration. This outline is rather simplistic; in reality there are other almost-leading diagrams whose contributions have to be analyzed separately. Moreover, the problem of estimating the diagrams is probabilistically critical in the sense of \cite{DNY3}, which is added to the factorial growth of the number of diagrams, to make the execution of this outline far from trivial. We will review some elements of that proof in Section \ref{sec:sum of DH} below, and also refer the reader to Section 3 of \cite{DH} for a more detailed exposition.

\smallskip
In particular, the proof in \cite{DH} does not require establishing propagation of chaos for the higher moments of the solution in order to obtain the effective equation for the second moment, {in sharp contrast with the earlier works that make use of the BBGKY and other similar hierarchies.} %(also, we rely on Gaussian hypercontractivity to obtain $L^p$ bounds on the solution and its iterates directly from their $L^2$ bounds).%
 This brings us to the main goal of this manuscript, which is to establish propagation of chaos and the corresponding (wave kinetic) hierarchy \emph{a posteriori} relying on the analysis introduced in \cite{DH}. Highly interesting results and unique features will appear, for the higher order statistics, depending on the initial distribution of the data, as we discuss both Gaussian and non-Gaussian initial distributions (for concreteness, only the Gaussian case was treated in \cite{DH}). In the former case, we will prove propagation of Gaussianity, which states that the asymptotic distribution of the modes remain Gaussian as it is initially. In the latter case, we will derive the limiting equations for the probability density function. We remark that this gives a \emph{complete description of the statistics} of the solutions in the kinetic limit, for both Gaussian and non-Gaussian initial distributions. 
\subsection{The kinetic setup}\label{setup} To state our results more precisely, let us first recall the wave kinetic setup starting with the microscopic system given by the cubic nonlinear Schr\"odinger equation. In dimension $d\geq 3$, we set this equation on a large torus of size $L$. The torus may be rational or irrational, which can always be rescaled to the square torus $\Tb_L^d=[0,L]^d$ but with the twisted Laplacian
\[\Delta_\beta=(2\pi)^{-1}(\beta^1\partial_1^2+\cdots+\beta^d\partial_d^2),\]
where $\beta=(\beta^1,\cdots,\beta^d)\in(\Rb^+)^d$ determines the aspect ratios of the torus. 
Consider the cubic NLS equation
\begin{equation}\label{nls}\tag{NLS}
\left\{
\begin{aligned}&(i\partial_t-\Delta_\beta)u+\lambda^2|u|^2u=0,\\
&u(0,x)=u_{\mathrm{in}}(x),
\end{aligned}
\right.
\end{equation} with random initial data $u(0)=u_{\mathrm{in}}$, and
\begin{equation}\label{data}\tag{DAT}u_{\mathrm{in}}(x)=\frac{1}{L^d}\sum_{k\in\Zb_L^d}\widehat{u_{\mathrm{in}}}(k)e^{2\pi i k\cdot x},\quad \widehat{u_{\mathrm{in}}}(k)=\sqrt{n_{\mathrm{in}}(k)}\eta_k(\omega).\end{equation} Here $\Zb_L^d=(L^{-1}\Zb)^d$, $n_{\mathrm{in}}$ is a nonnegative Schwartz function on $\Rb^d$, and $\eta_k(\omega)$ are i.i.d. random variables satisfying 
$$
\mathbb E \eta_k=0, \qquad \mathbb E|\eta_k|^2=1.
$$
This distribution of initial data will be called Gaussian if the law of each $\eta_k$ is a standard complex Gaussian, and called non-Gaussian otherwise.  
Define
\[\alpha=\lambda^2L^{-d},\quad T_{\mathrm{kin}}=\frac{1}{2\alpha^2}=\frac{1}{2}\cdot\frac{L^{2d}}{\lambda^4}.\] 

The parameter $\alpha$ stands for the strength of the nonlinearity\footnote{With overwhelming probability for large $L$, it can be shown that the size of the nonlinearity (say in $L^2$ norm) is comparable to $\alpha$. This follows from the probabilistic analysis performed in \cite{DH}, but can also be seen by simple heuristic considerations (cf. the introduction of \cite{DH}). We also note that it is common in the physics literature to use a different parametrization of the Fourier series in \eqref{data} by replacing the $L^{-d}$ factor in (\ref{data}) with $L^{-d/2}$, in which case $\alpha$ would be defined as $\lambda^2$ and $T_{\mathrm{kin}}=1/2\lambda^4$.} and $T_{\mathrm{kin}}$ is the kinetic timescale at which the NLS dynamics is approximated by that of the WKE. The \emph{kinetic limit} is taken by letting $L\to \infty$ (large box limit) and $\alpha\to 0$ (weak nonlinearity limit), according to some scaling law that specifies the relative rate of those two limits. 

The general form of a scaling law is $\alpha=L^{-\gamma}$ where $0\leq \gamma\leq \infty$, with the understanding that if $\gamma=0$ then the $\alpha\to 0$ limit is taken after the $L\to \infty$ limit, and vice versa for $\gamma=\infty$. As explained in the introduction of \cite{DH}, not all scaling laws are admissible for the kinetic theory, and the admissibility range can depend on the shape of the torus (i.e. the diophantine properties of $\beta$). Indeed, without any diophantine conditions on $\beta$, the admissible range of $\gamma$ is $0\leq \gamma\leq 1$, and one can show (e.g. \cite{CG2}) that if $\gamma>1$, then the kinetic description does not hold, for example when $\beta=(1, \ldots, 1)$. Imposing generic diophantine conditions on $\beta$ by removing a set of ``bad" vectors of zero Lebesgue measure, widens the admissible range of $\gamma$ to $0\leq \gamma \leq \frac{d}{2}$.

In \cite{DH}, we treated scaling laws of the form $\alpha=L^{-\gamma}$ with $\gamma\leq 1$ but sufficiently close to 1. When $\gamma<1$ no requirements on the shape of the torus are needed, {but for the endpoint} $\gamma=1$, the torus needs to have generic shape, i.e. $\beta$ should belong to the complement of a fixed Lebesgue null set $\Zf$ defined by a set of explicit Diophantine conditions (Lemma \ref{generic}). We remark that the approach in \cite{DH} can be used to cover the full range $\gamma\in(0,1)$; this will be addressed in a forthcoming work {under preparation}. In the current paper, for the sake of concreteness, we will stick to the setup in \cite{DH} and adopt the scaling law $\alpha=L^{-1}$, with the understanding that the result also applies to $\gamma$ smaller but sufficiently close to 1 and without any diophantine condition on $\beta$. As such, throughout the proof we will assume $\beta$ is generic in the above sense, $\lambda=L^{(d-1)/2}$, and $T_{\mathrm{kin}}=L^2/2$.

\medskip

For $0<\delta\ll 1$ depending on $n_{\mathrm{in}}$, define the solution $n=n(t,k)$, for $t\in[0,\delta]$ and $k\in\Rb^d$, to the wave kinetic equation
\begin{equation}\label{wke}\tag{WKE}
\left\{
\begin{aligned}&\partial_t n(t,k)=\Kc(n(t),n(t),n(t))(k),\\
&n(0,k)=n_{\mathrm{in}}(k),
\end{aligned}
\right.
\end{equation} where the nonlinearity
\begin{multline}\label{wkenon}\tag{KIN}\Kc(\phi_1,\phi_2,\phi_3)(k)=\int_{(\Rb^d)^3}\big\{\phi_1(k_1)\phi_2(k_2)\phi_3(k_3)-\phi_1(k)\phi_2(k_2)\phi_3(k_3)+\phi_1(k_1)\phi_2(k)\phi_3(k_3)\\-\phi_1(k_1)\phi_2(k_2)\phi_3(k)\big\}\times\dirac(k_1-k_2+k_3-k)\cdot \dirac(|k_1|_\beta^2-|k_2|_\beta^2+|k_3|_\beta^2-|k|_\beta^2)\,\mathrm{d}k_1\mathrm{d}k_2\mathrm{d}k_3.
\end{multline} Here $\dirac$ is the Dirac delta, and for $k=(k^1,\cdots,k^d)$ and $\ell=(\ell^1,\cdots,\ell^d)$ we denote
\[|k|_\beta^2:=\langle k,k\rangle_\beta,\quad \langle k,\ell\rangle_\beta:=\beta^1k^1\ell^1+\cdots+\beta^dk^d\ell^d.\]

\medskip
The following theorem is the main result of \cite{DH}, which describes the evolution of the variance $\Eb|\widehat{u}(t,k)|^2$ in the limit. Here and below, the expectation $\Eb$ is always taken under the assumption that (\ref{nls}) has a smooth solution on $[0,\delta\cdot T_{\mathrm{kin}}]$, which happens with overwhelming probability.
\begin{thm}[Theorem 1.1 of \cite{DH}]\label{main0} Fix $A\geq 40d$, $\beta\in(\Rb^+)^d\backslash\Zf$, and a function $n_{\mathrm{in}}\geq 0$ {such that
\[\|n_{\mathrm{in}}\|_{\mathfrak{S}^{40d}}:=\max_{|\alpha|,|\beta|\leq 40d}\|k^\alpha \partial_k^\beta n_{\mathrm{in}}\|_{L^2}\leq C_1<\infty.\]}Assume the law of each $\eta_k$ is Gaussian. Let $\delta$ be small enough depending on $(A,\beta,C_1)$, and $L$ be sufficiently large depending on $\delta$. Set $\lambda=L^{(d-1)/2}$ so $\alpha=L^{-1}$ and $T_{\mathrm{kin}}=L^2/2$. Then, the equation (\ref{nls}), with random initial data (\ref{data}), has a smooth solution up to time
\[T=\frac{\delta L^2}{2}=\delta\cdot T_{\mathrm{kin}},\] with probability $\geq 1-L^{-A}$. Moreover, we have
\begin{equation}\label{approximation0}\lim_{L\to\infty}\sup_{t\in[0,T]}\sup_{k\in\Zb_L^d}\left|\Eb\left|\widehat{u}(t,k)\right|^2-n\bigg(\frac{t}{T_{\mathrm{kin}}},k\bigg)\right|=0,
\end{equation} where $n(t,k)$ is the solution to (\ref{wke}).
\end{thm}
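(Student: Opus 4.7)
The plan is to derive \eqref{approximation0} through a Feynman-diagram expansion of the NLS Picard iterates with Wick-pairing of the random Gaussian amplitudes, following the outline sketched in the introduction. After a gauge transform removing the conserved mean $L^2$-mass, I would expand $\widehat u(t,k)$ as a Duhamel series truncated at some large order $N=N(A,\delta)$: each term is indexed by a ternary tree $\mathcal{T}$ whose leaves carry frequencies $k_i\in\Zb_L^d$, Gaussian amplitudes $\eta_{k_i}$, and whose branching nodes contribute an oscillatory weight built from $\sum\pm|k_{i_\ell}|_\beta^2$ integrated over the time simplex of depth equal to $\mathrm{depth}(\mathcal{T})$. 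The tail beyond order $N$ I would control by a bootstrap applied to the equation satisfied by the remainder, using random a priori estimates; the same argument simultaneously produces a smooth solution up to $T=\delta\cdot T_{\mathrm{kin}}$ with probability $\ge 1-L^{-A}$.

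Second, I would compute $\Eb|\widehat u(t,k)|^2$ by summing over ordered pairs $(\mathcal{T},\mathcal{T}')$ of trees (\emph{couples}) and applying Wick's theorem to factor the expectation over pairings of leaves between the two trees. Each pairing yields a lattice sum over the surviving frequencies against the product of two oscillatory weights. The combinatorial backbone is to isolate the \emph{regular couples}, i.e., those whose pairing is compatible with a nested sequence of binary branchings: for each fixed number $m$ of such branchings, the regular sum should converge as $L\to\infty$ to the $m$-th Picard iterate of \eqref{wke} at rescaled time $t/T_{\mathrm{kin}}$, with the collision kernel $\Kc$ in \eqref{wkenon} arising from the joint weak limit of normalized counting measures on the momentum and resonance constraints (Riemann-sum approximation to the two $\dirac$ factors). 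Resumming in $m$ gives $n(t/T_{\mathrm{kin}},k)$.

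The main technical obstacle is the uniform-in-$L$ estimate on the total contribution of all non-regular couples (and the subleading parts of the regular ones), whose number grows factorially in $N$ and which is probabilistically critical in the sense of \cite{DNY3}. This reduces to sharp bounds on counting-type lattice sums of the form
\[\sum_{k_1-k_2+k_3=k}\chi\!\bigl(|k_1|_\beta^2-|k_2|_\beta^2+|k_3|_\beta^2-|k|_\beta^2\bigr)\]
with sharp cutoff at the critical scale $T_{\mathrm{kin}}^{-1}=2L^{-2}$; this is exactly where the genericity assumption $\beta\notin\Zf$ enters, via diophantine bounds ruling out accidental resonance clustering. My plan is to organize the diagrams into \emph{molecules} (maximal sub-couples with a given resonance pattern) and perform an inductive amortized estimate per molecule, showing that every non-regular couple gains at least one additional power $L^{-c}$ beyond the naive $L^{0}$ heuristic, uniformly in the tree topology. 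Closing this combinatorial estimate against the $(N!)^{O(1)}$ count of trees and pairings, and then passing $L\to\infty$ (with $N$ chosen slowly growing in $\log L$), yields \eqref{approximation0} uniformly in $(t,k)\in[0,T]\times\Zb_L^d$.
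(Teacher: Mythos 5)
Your plan reproduces the architecture of the proof in \cite{DH}, which is exactly what this paper invokes for Theorem \ref{main0}: gauge transform removing the conserved mass, truncated Picard/tree expansion, remainder control by a fixed-point argument with random a priori bounds, Wick pairing into couples, identification of the regular couples whose resummation gives the Taylor/Picard iterates of \eqref{wke} via Riemann-sum approximation of the two delta constraints, and reduction of the remaining diagrams to lattice counting estimates exploiting the genericity of $\beta$. Up to terminology (in \cite{DH} a ``molecule'' is the graph whose atoms are the interaction vertices and whose bonds record parent--child and leaf-pairing relations, not a ``maximal sub-couple with a given resonance pattern''; and regular couples are built by concatenating $(1,1)$-mini couples and mini trees rather than by ``nested binary branchings''), this is the same route.

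The genuine gap is in your last step: the claim that \emph{every} non-regular couple gains a power $L^{-c}$ over the naive heuristic, ``uniformly in the tree topology,'' is false as stated, and no amortized per-molecule estimate will deliver it. Couples containing long \emph{irregular chains} (Definition \ref{irrechain}) give rise to molecules dominated by type I chains; for these the counting algorithm produces no good steps ($r=0$ in the notation of Proposition \ref{gain}) and hence no power gain, while the type I chains cost an extra $\delta^{-q/2}$ with $q$ comparable to the scale, so the individual bound does not even remain $O(\delta^{cm})$, let alone gain $L^{-\nu}$. The proof in \cite{DH} handles these terms by grouping couples into congruence classes obtained by flipping the structure along each irregular chain and exhibiting a cancellation in the sum of the $2^q$ congruent terms (the garden analogue is carried out in Section \ref{irre} here); only the post-cancellation skeleton has few type I chains, after which the molecule counting closes against the factorial number of diagrams. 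Without this cancellation mechanism (or an equivalent device) your scheme cannot be closed, so this ingredient must be added before the argument is complete.
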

{\begin{rem} Theorem \ref{main0} is stated in \cite{DH} for Schwartz $n_{\mathrm{in}}$. {A closer look at the proof shows that} it remains true as long as $n_{\mathrm{in}}\in\Sf^{40d}$, and $\delta$ should only depend on the $\Sf^{40d}$ norm of $n_{\mathrm{in}}$; see the remarks after Theorem 1.1 in \cite{DH}. The same comment also applies to all the main results of the current paper.
\end{rem}}

\subsection{Propagation of chaos: The Gaussian case}

As mentioned above, the proof of Theorem \ref{main0} does not require obtaining asymptotics on the higher Fourier moments. 
Such information is provided in our first main result, which can be viewed as an extension of Theorem \ref{main0}.
\begin{thm}[Propagation of chaos and Gaussianity]\label{main} Under the same assumptions as Theorem \ref{main0} above, fix a positive integer $r$ and nonnegative integers $p_1,\cdots,p_r$ and $q_1,\cdots,q_r$. Then, if at least one $p_j\neq q_j\,(1\leq j\leq r)$, we have
\begin{equation}\label{approximation1}\lim_{L\to\infty}\sup_{t\in[0,T]}\sup_{\substack{(k_1,\cdots,k_r)\in(\Zb_L^{d})^r\\ k_i\neq k_j\,(\forall i\neq j)}}\bigg|\Eb\bigg(\prod_{j=1}^r\big(\widehat{u}(t,k_j)\big)^{p_j}\big(\overline{\widehat{u}(t,k_j)}\big)^{q_j}\bigg)\bigg|=0.
\end{equation} Here, as in Theorem \ref{main0}, the expectation is taken only when (\ref{nls}) has a smooth solution on $[0,T]$ where $T=\delta\cdot T_{\mathrm{kin}}$ (which has probability $\geq 1-L^{-A}$). If $p_j=q_j$ for each $1\leq j\leq r$, then we have
\begin{equation}\label{approximation2}\lim_{L\to\infty}\sup_{t\in[0,T]}\sup_{\substack{(k_1,\cdots,k_r)\in(\Zb_L^{d})^r\\ k_i\neq k_j\,(\forall i\neq j)}}\bigg|\Eb\bigg(\prod_{j=1}^r\left|\widehat{u}(t,k_j)\right|^{2p_j}\bigg)-\prod_{j=1}^r (p_j)!\cdot n\bigg(\frac{t}{T_{\mathrm{kin}}},k_j\bigg)^{p_j}\bigg|=0.
\end{equation}
\end{thm}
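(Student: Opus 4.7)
The plan is to extend the Feynman diagram machinery of \cite{DH} from the couple of trees that represents the second moment to the multi-tree diagrams that represent arbitrary mixed moments. First I would iterate Duhamel's formula for \eqref{nls} in the same way as in \cite{DH}, writing each factor $\widehat{u}(t,k_j)$ as a finite sum over ternary trees $\mathcal{T}$ of multilinear expressions $\mathcal{J}_\mathcal{T}$ evaluated on the initial data values $\widehat{u_{\mathrm{in}}}$ at the leaves, plus a remainder term controlled by the local well-posedness estimate on $[0,\delta T_{\mathrm{kin}}]$ established there. The product $\prod_j \bigl(\widehat{u}(t,k_j)\bigr)^{p_j}\bigl(\overline{\widehat{u}(t,k_j)}\bigr)^{q_j}$ then becomes a sum over tuples of $N=\sum_j(p_j+q_j)$ trees, carrying $P_+=\sum_j p_j$ ``forward'' trees (each attached to some frequency $k_j$) and $P_-=\sum_j q_j$ ``backward'' trees.

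Next I would take expectation and invoke Wick's theorem (which is where Gaussianity is used) to replace the random initial data by a sum over all perfect pairings of the leaves, each pairing matching one forward leaf to one backward leaf of equal frequency. Following \cite{DH}, each pairing together with its underlying $N$-tuple of trees defines a \emph{multi-couple}. The multi-couple decomposes into connected components according to which trees share at least one leaf-pairing; a component consisting of exactly one forward and one backward tree is a couple in the sense of \cite{DH}. The claim to prove is that the leading contribution in the kinetic limit arises only from multi-couples for which (a) every component is a single couple, (b) each such couple pairs a forward tree at frequency $k_i$ with a backward tree at the same frequency $k_j=k_i$, and (c) each such couple is \emph{regular} in the sense of \cite{DH}. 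For such a ``totally factorized'' regular multi-couple the contribution splits as a product of single-couple contributions, each converging to $n\bigl(t/T_{\mathrm{kin}},k_j\bigr)$ by Theorem \ref{main0}. Counting the number of such configurations: when $p_j=q_j$ for all $j$ the number of forward-backward matchings at each frequency $k_j$ is $p_j!$, yielding the factor $\prod_j(p_j)!\,n^{p_j}$ in \eqref{approximation2}; when $p_j\neq q_j$ for some $j$ no such matching exists, so the leading term vanishes and \eqref{approximation1} follows.

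The rest of the argument consists in proving that all remaining diagrams are negligible. There are three kinds to control: (i) multi-couples still of ``disjoint couples'' type but with at least one irregular component, (ii) multi-couples of disjoint-couples type whose components pair mismatched root frequencies, and (iii) genuine multi-couples with at least one component involving three or more trees. Type (i) is handled by applying the single-couple bounds of \cite{DH} component-wise. Type (ii) is handled by the same decorrelation gain, because unmatched root frequencies force an extra non-resonant time oscillation that kills the leading order. Type (iii) is the main obstacle: a component involving $m\geq 3$ trees produces a larger hypergraph of frequency constraints than a couple, and the probabilistically critical counting of \cite{DH} must be re-run on this enlarged diagram. The key input should be that every extra tree in a component yields an extra gluing condition at the pairing leaves, which improves the counting bound by a power of $L^{-1}$ and compensates the additional summation. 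The technical heart of the proof therefore lies in adapting the molecular/regular-tree analysis of \cite{DH} to this richer combinatorial setting while keeping track of the factorial growth in $N$; this is where the effort will be concentrated, and where I expect the probabilistic criticality highlighted in \cite{DH} to reappear and dictate the sharp $\ell^2$-based estimates on diagram sums.
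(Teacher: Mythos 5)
Your plan is essentially the paper's proof: Duhamel/tree expansion, Isserlis (Wick) pairing of the leaves into what the paper calls \emph{gardens}, extraction of the leading term from the configurations that split into disjoint regular couples with matched root frequencies (giving the $\prod_j (p_j)!$ count and, via the second-moment asymptotics of \cite{DH}, the product $\prod_j n^{p_j}$), and reduction of everything else to a molecule counting estimate for the genuinely connected multi-tree components. Two corrections to the mechanisms you describe: your type (ii) terms vanish \emph{identically} --- summing the momentum constraints over a couple cancels all paired leaves and forces the two root frequencies to coincide, so no oscillation gain is involved; and the quantitative source of the gain for your type (iii) is a parity argument showing that a connected component of a mixed garden must span at least \emph{four} trees (each tree has an odd number of leaves, so an odd number of trees cannot be internally paired, and exactly two fully paired trees would form a couple), whence the associated molecule has at most $R/2$ components and circuit rank $\chi=E-V+F\le m-R/2$, producing the decisive factor $L^{-(d-1)R/2}$ in the counting bound. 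One further point you gloss over: controlling the remainder and the exceptional set by the $1-L^{-A}$ bound of \cite{DH} does not close, since on the exceptional set the $2R$-fold product is only bounded by $L^{2dR}$ with $A$ and $\delta$ fixed before $R$; the paper must first upgrade the exceptional probability to $e^{-(\log L)^3}$.
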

A key feature of Theorem \ref{main} is that, up to error terms that vanish as $L\to\infty$, we have
\begin{equation}\label{chaos}\Eb\bigg(\prod_{j=1}^r|\widehat{u}(t,k_j)|^{2p_j}\bigg)\approx\prod_{j=1}^r\Eb|\widehat{u}(t,k_j)|^{2p_j},\quad\mathrm{and\ all\ other\ moments}\approx 0.\end{equation} This means that, for fixed $t$, the random variables $\widehat{u}(t,k)$ for different $k$ become independent in the limit (at least in terms of the marginal distributions of any finitely many of them), which justifies rigorously the \emph{propagation of chaos} assumption in the literature, as described in the beginning of this paper.

Note that, these coefficients cannot be independent without taking limits, because correlations will always be produced by the nonlinear interactions in the NLS equation. Nonetheless, this independence reappears in the kinetic limit as $L\to \infty$ and $\alpha\to 0$, for the same subtle and deep reason that makes the kinetic approximation in \eqref{approximation0} hold. Namely, the only non-vanishing interactions contributing to the expectations in \eqref{approximation0}--\eqref{approximation2} are those obtained by concatenating blocks of basic interactions called $(1,1)$-mini couples and mini trees (see Figures \ref{fig:minicpl}--\ref{fig:steps}), thus forming what we call \emph{regular couples} (for second moments) or \emph{regular multi-couples} (for higher order moments, see Section \ref{ideas}). Such interactions can only be built if $p_j=q_j$ in the notation of the Theorem \ref{main}; moreover, in the higher order case, the associated structure actually decouples into second order structures, hence (\ref{chaos}) naturally occurs. The same reasoning also holds in the non-Gaussian case below (Section \ref{nongauss}), for which \eqref{chaos} remains valid.

In addition, in this Gaussian setting we have 
\begin{equation}\label{gauss}\Eb|\widehat{u}(t,k)|^{2p}\approx p!\cdot n\bigg(\frac{t}{T_{\mathrm{kin}}},k\bigg)^p,
\end{equation} which means that the law of $\widehat{u}(t,k)$ in the limit is Gaussian with variance $n\big(\frac{t}{T_{\mathrm{kin}}},k\big)$ as long as the initial state at $t=0$ is Gaussian. This has been conjectured in the physics literature under the name of \emph{propagation of Gaussianity} (see also the discussion following Theorem \ref{main3}).
\subsection{The non-Gaussian case}\label{nongauss} Highly interesting results appear in the non-Gaussian case, where unlike Theorems \ref{main0} and \ref{main}, the law of $\eta_k$ may not be Gaussian. While the second moments still follow the WKE in this setting, the non-Gaussianity of the initial law starts to exhibit itself at the higher ($\geq 4$) order moments and statistics. We will assume the law of $\eta_k$ is rotationally symmetric\footnote{Though rotation symmetry seems to be always assumed in physics literature; it would be interesting to see what happens without this assumption, in particular if (\ref{evolpdf3}) remains true. Here the loss of gauge invariance may lead to additional contributions, but probably they will be error terms in the end.}, and has exponential tails. Then, we have the following modification to Theorem \ref{main}:
\begin{thm}[Evolution of moments]
\label{main2} Suppose the i.i.d. random variables $\{\eta_k(\omega)\}$ have a law that is rotation symmetric, and satisfies that \[\mu_r:=\Eb|\eta_k|^{2r}\leq (C_0r)!,\quad \mu_1=1.\] for some constant $C_0$ {(this is equivalent to $\Eb(e^{|\eta_k|^\beta})<\infty$ for small $\beta>0$)}. Then the same limits in (\ref{approximation0}) and (\ref{approximation1}) remain true. Moreover, instead of (\ref{approximation2}), we have
\begin{equation}\label{approximation2new}\lim_{L\to\infty}\sup_{t\in[0,T]}\sup_{\substack{(k_1,\cdots,k_r)\in(\Zb_L^{d})^r\\ k_i\neq k_j\,(\forall i\neq j)}}\bigg|\Eb\bigg(\prod_{j=1}^r\left|\widehat{u}(t,k_j)\right|^{2p_j}\bigg)-\prod_{j=1}^r\mu_{p_j}\bigg(\frac{t}{T_{\mathrm{kin}}},k_j\bigg)\bigg|=0.
\end{equation} Here the functions $\mu_r(t,k)$ is defined as follows: recall $n(t,k)$ is the solution to(\ref{wke}). Let $n_0(t,k)$ be the solution to the following equation
\begin{equation}\label{wke0}\tag{WKE-0}
\left\{
\begin{aligned}&\partial_t n_0(t,k)=\Kc_0(t,k),\\
&n(0,k)=n_{\mathrm{in}}(k),
\end{aligned}
\right.
\end{equation} where
\begin{multline}\label{wkenon0}\tag{KIN-0}\Kc_0(t,k)=\int_{(\Rb^d)^3}n_0(t,k)\big\{n(t,k_1)n(t,k_3)-n(t,k_2)n(t,k_3)-n(t,k_1)n(t,k_2)\big\}\\\times\dirac(k_1-k_2+k_3-k)\cdot \dirac(|k_1|_\beta^2-|k_2|_\beta^2+|k_3|_\beta^2-|k|_\beta^2)\,\mathrm{d}k_1\mathrm{d}k_2\mathrm{d}k_3.
\end{multline} Define also $n_+(t,k)=n(t,k)-n_0(t,k)$. Then we have
\begin{equation}\label{defmur}
\mu_q(t,k)=\sum_{p=0}^q\binom{q}{p}^2(q-p)!\mu_p\cdot(n_0(t,k))^p(n_+(t,k))^{q-p}.
\end{equation} Note that if $\{\eta_k\}$ is Gaussian, then $\mu_p=p!$, so (\ref{defmur}) yields that $\mu_q(t,k)=q!(n(t,k))^q$, and we recover Theorem \ref{main}. Similarly, for $q=1$ we have $\mu_1(t,k)=n(t,k)$, so Theorem \ref{main0} remains true in the non-Gaussian case.
\end{thm}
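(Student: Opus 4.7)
The plan is to re-run the probabilistic and diagrammatic framework of \cite{DH} after replacing the Gaussian Wick theorem for the initial Fourier modes $\widehat{u_{\mathrm{in}}}(k)$ by the moment expansion appropriate to rotation-symmetric non-Gaussian $\eta_k$. The ternary-tree iterates of (\ref{nls}), the organization of Feynman diagrams into mini-couples, mini-trees, and regular (multi-)couples, and the bounds on remainder terms from \cite{DH} remain unchanged. Only the evaluation of the initial-time expectation is modified: after tracing all diagrams back to leaves labeled by initial Fourier modes, each cluster of $2p$ leaves sitting at a common wave-vector contributes a factor $\mu_p$ (with $\mu_1=1$). Crucially, rotation symmetry forces every such cluster to contain equal numbers of $\widehat{u_{\mathrm{in}}}$ and $\overline{\widehat{u_{\mathrm{in}}}}$ factors, which is exactly the gauge-invariance condition used in \cite{DH} to kill non-contributing diagrams; this immediately yields (\ref{approximation1}), and with $\mu_1=1$ the proof of (\ref{approximation0}) goes through verbatim in the non-Gaussian setting.

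For (\ref{approximation2new}), I would first use the regular multi-couple decoupling of \cite{DH} to reduce to a single external mode $k=k_j$, since the contributions at distinct $k_j$'s factor up to errors that vanish as $L\to\infty$. For the $2q$ external leaves carrying $\widehat{u}(t,k)$ and $\overline{\widehat{u}(t,k)}$, I classify the leading diagrams according to how many of the regular couples composing the multi-couple are \emph{frozen} (all initial leaves at mode $k$) versus \emph{scattered} (initial leaves at generic modes paired binarily). Summing the scattered couples, the regular-couple analysis of \cite{DH} yields exactly $n_+(t,k):=n(t,k)-n_0(t,k)$ per pairing, i.e.\ the ``genuinely new'' part of $n(t,k)$ produced by the gain term of (\ref{wke}); summing the frozen couples, in which every scattering event preserves the mode $k$, produces the linear loss evolution (\ref{wke0}) whose solution attenuates $n_{\mathrm{in}}(k)$ by the cumulative WKE loss rate. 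When $p$ of the $q$ couples are frozen and $q-p$ scattered, the $2p$ frozen initial leaves merge into a single non-Gaussian cluster at mode $k$ contributing $\mu_p$, while the scattered leaves Wick-pair as in the Gaussian case. Collecting the combinatorial factors $\binom{q}{p}^2(q-p)!$ counting the choices of frozen leaves on each side and the Wick pair-partitions of the scattered ones, together with the propagator weights $n_0^p n_+^{q-p}$, yields the identity (\ref{defmur}).

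The main obstacle, beyond the heavy diagrammatic machinery inherited from \cite{DH}, is controlling higher cumulants of $\eta_k$ throughout the expansion. One must allow arbitrarily large initial clusters of co-located leaves, with weights up to $\mu_p\leq (C_0p)!$, and still dominate the count-plus-cancellation bounds of \cite{DH}, which already operate at the probabilistically critical threshold of \cite{DNY3}. The exponential-tail hypothesis $\mu_r\leq (C_0r)!$ is precisely tuned to absorb these extra factorials against the remaining estimates. A more delicate secondary point is to show that in the multi-couple setting the only surviving configurations in the $L\to\infty$ limit are the frozen/scattered decompositions above: one must rule out interference diagrams in which leaves of a single regular couple split between the frozen and scattered regimes, and also diagrams in which non-regular clusters at mode $k$ produce unexpected leading contributions. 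This requires an extension of the regular-couple classification of \cite{DH} that tracks clustered leaves through the full diagrammatic hierarchy, and is where I expect most of the new work to lie.
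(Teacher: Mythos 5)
Your overall architecture---replace Wick's theorem by non-Gaussian moment bookkeeping, show that the leading diagrams are still regular multi-couples, and split each multi-couple into two species of regular couples whose sums produce $n_0$ and $n_+$---matches the paper's strategy. But there is a concrete error in your identification of the two species. You define the ``frozen'' couples as those with \emph{all initial leaves at mode $k$}, ``in which every scattering event preserves the mode $k$.'' That class of decorations is fully resonant and contributes only a power-of-$L$-suppressed amount; it certainly does not sum to $n_0(t,k)$, whose evolution (\ref{wke0})--(\ref{wkenon0}) manifestly integrates over generic $k_1,k_2,k_3$. The correct dichotomy is between \emph{tangential} and \emph{non-tangential} regular couples: a regular couple is tangential when both of its trees are regular trees, so that the two \emph{lone leaves} (and only those) are forced to carry the root mode $k$ and hence a factor $n_{\mathrm{in}}(k)$, while all remaining leaves sit at generic modes. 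The sum over tangential couples reproduces exactly the terms in the expansion of $n(t,k)$ containing a factor $n_{\mathrm{in}}(k)$, which is why it solves the \emph{linear} equation (\ref{wke0}); and the non-Gaussian cluster forms precisely among the $2p$ lone leaves of the $p$ tangential couples (condition (ii) of Definition \ref{defregover}), not among ``all initial leaves'' of your frozen couples.

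Two further gaps. First, neither (\ref{approximation0}) nor (\ref{approximation1}) is ``verbatim'' or ``immediate'': rotation symmetry kills moments only when $\sum_j p_j\neq\sum_j q_j$. When the totals agree but some $p_j\neq q_j$, and even for the second moment itself, the expansion contains over-pairings of size $\geq 4$ among \emph{internal} leaves, and one must prove these are $O(L^{-\nu})$. This is the main new quantitative input (Proposition \ref{nongaussprop}): each independent equality $k_\lf=k_{\lf'}$ not already implied by the multi-couple structure raises the codimension of the decoration set and buys a power of $L$, which must then beat the growth $|\lambda(\Oc)|\leq C^n n^{Cq}$ of the cumulant-type coefficients---this is exactly where $\mu_r\leq(C_0r)!$ and the restriction $R\lesssim\log L/(\log\log L)^2$ enter. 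You flag this as an obstacle but supply no mechanism. Second, assigning $\mu_p$ directly to a cluster of co-located leaves does not mesh with the per-diagram counting machinery, since ``co-location'' is a property of the decoration rather than of a combinatorial structure; one needs the substitute for Isserlis' theorem (Lemma \ref{isserlisnew}), i.e.\ an expansion over \emph{over-pairings} with coefficients $\lambda(\Oc)$ defined by the recursion (\ref{recur}), and the moment $\mu_p$ is only recovered at the end by resumming $\sum_{\Oc'}\lambda(\Oc')\xi_{(2b_1,\cdots,2b_r),\Oc'}=\prod_i\mu_{b_i}$ together with the $(b_i)!$ pairings of the tangential trees. Your factor $\binom{q}{p}^2(q-p)!$ is correct, but the derivation as sketched would miscount without this bookkeeping.
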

Note that in Theorem \ref{main2} we still have (\ref{chaos}), thus propagation of chaos remains true in the non-Gaussian case. In addition, instead of (\ref{gauss}) we have $\Eb|\widehat{u}(t,k)|^{2p}\approx\mu_p\big(\frac{t}{T_{\mathrm{kin}}},k\big)$ where $\mu_p(t,k)$ is defined as in (\ref{defmur}). As far as we know, these expressions for higher order moments are new.

We remark that Theorems \ref{main} and \ref{main2} actually hold for moments whose degree (given by $\sum_{j=1}^r(p_j+q_j)$ in the notation of \eqref{approximation1}) may diverge as $L\to\infty$. Indeed, we will see in the proof that this degree can be taken as big as $\log L$ (for Theorem \ref{main}) or $\frac{\log L}{(\log \log L)^2}$ (for Theorem \ref{main2}).

\smallskip
{Under slightly stronger assumptions, Theorem \ref{main2} allows us to describe the evolution of the law of individual Fourier modes in terms of the density function, which then provides a full description of the statistics of the NLS solution in the limit.} This is summarized in our next theorem below.
\begin{thm}[Evolution of density]
\label{main3} In Theorem \ref{main2}, assume further that $\mu_r\leq C^r(2r)!$ for some constant $C$ {(this is equivalent to $\Eb(e^{\beta|\eta_k|})<\infty$ for small $\beta>0$)}. Recall the solution $n=n(t,k)$ to (\ref{wke}), and define
\begin{equation}\label{evolpdf1}\sigma_k(t)=\int_{(\Rb^d)^3}n(t,k_1)n(t,k_2)n(t,k_3)\dirac(k_1-k_2+k_3-k)\dirac(|k_1|_\beta^2-|k_2|_\beta^2+|k_3|_\beta^2-|k|_\beta^2)\,\mathrm{d}k_1\mathrm{d}k_2\mathrm{d}k_3,
\end{equation}
\begin{multline}\label{evolpdf2}\gamma_k(t)=\int_{(\Rb^d)^3}\big\{n(t,k_1)n(t,k_3)-n(t,k_2)n(t,k_3)-n(t,k_1)n(t,k_2)\big\}\\\times\dirac(k_1-k_2+k_3-k)\dirac(|k_1|_\beta^2-|k_2|_\beta^2+|k_3|_\beta^2-|k|_\beta^2)\,\mathrm{d}k_1\mathrm{d}k_2\mathrm{d}k_3.
\end{multline} Let the density function of each $\eta_k(\omega)$ be $\rho_{*}=\rho_{*}(v)$, where $v\in\Cb$ is also viewed as an $\Rb^2$ vector; assume $\rho_*$ is a radial function. Let $\rho_k=\rho_k(t,v)$ be the solution to the following linear equation
\begin{equation}\label{evolpdf3}
\left\{
\begin{aligned}\partial_t\rho_k&=\frac{\sigma_k(t)}{4}\Delta\rho_k-\frac{\gamma_k(t)}{2}\nabla\cdot(v\rho_k),\\
\rho_k(0)&=\frac{1}{n_{\mathrm{in}}(k)}\rho_*\bigg(\frac{v}{\sqrt{n_{\mathrm{in}}(k)}}\bigg).
\end{aligned}
\right.
\end{equation} Clearly each $\rho_k$ is also radial. Fix $t\in[0,\delta]$, a positive integer $r$ and distinct vectors $k_j\in\Rb^d\,(1\leq j\leq r)$. Let $k_j^{(L)}\in\Zb_L^d\,(1\leq j\leq r)$ be such that $k_j^{(L)}\to k_j\,(1\leq j\leq r)$ as $L\to\infty$, then the random variables \begin{equation}\label{evolpdf4}\big(\widehat{u}\big(t\cdot T_{\mathrm{kin}},k_1^{(L)}\big),\widehat{u}\big(t\cdot T_{\mathrm{kin}},k_2^{(L)}\big),\cdots,\widehat{u}\big(t\cdot T_{\mathrm{kin}},k_r^{(L)}\big)\big)\end{equation} converge in law, as $L\to\infty$, to the random variable with density function
\begin{equation}\label{evolpdf5}\rho_{k_1}(t,v_1)\cdot \rho_{k_2}(t,v_2)\cdots\rho_{k_r}(t,v_r).
\end{equation}
\end{thm}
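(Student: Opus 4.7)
The plan is to deduce Theorem \ref{main3} from Theorem \ref{main2} via the method of moments. The first and most substantive step is to match the combinatorial formula (\ref{defmur}) with the radial moments of the density $\rho_k(t,v)$: setting $m_q(t,k) := \int_{\Rb^2} |v|^{2q} \rho_k(t,v)\,dv$, a direct integration by parts in (\ref{evolpdf3}), using $\Delta|v|^{2q}=4q^2|v|^{2q-2}$ and $\nabla(|v|^{2q})\cdot v = 2q|v|^{2q}$, gives
\[\partial_t m_q = q^2 \sigma_k(t)\, m_{q-1} + q\,\gamma_k(t)\, m_q, \qquad m_q(0,k) = n_{\mathrm{in}}(k)^q\, \mu_q.\]
On the other hand, from (\ref{wke}) and (\ref{wke0}) one reads off $\partial_t n_0 = \gamma_k n_0$ and $\partial_t n_+ = \sigma_k + \gamma_k n_+$; differentiating (\ref{defmur}) and invoking the binomial identity $(q-p)\binom{q}{p}^2(q-p)! = q^2\binom{q-1}{p}^2(q-1-p)!$ yields the same ODE and initial condition for $\mu_q(t,k)$ (only the $p=q$ term survives at $t=0$ since $n_+(0,k)=0$). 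By uniqueness of this triangular linear system, $m_q(t,k) = \mu_q(t,k)$ for all admissible $q,t,k$.

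Next, the target random vector $Z^{(\infty)}$ with joint density $\prod_{j=1}^{r}\rho_{k_j}(t,v_j)$ has independent, radially symmetric marginals; rotation invariance forces $\Eb[Z_j^p\overline{Z_j}^{\,q}]=0$ unless $p=q$, and in that case $\Eb|Z_j|^{2p} = \mu_p(t,k_j)$ by the first step, so all its joint moments factorize. Meanwhile, Theorem \ref{main2} (specifically (\ref{approximation1}) together with (\ref{approximation2new}), and continuity of $\mu_p$ in $k$ to pass from $k_j^{(L)}$ to $k_j$) implies that the joint moments of $Z^{(L)} := (\widehat u(t\,T_{\mathrm{kin}}, k_j^{(L)}))_{j=1}^{r}$ converge as $L\to\infty$ to exactly the same values. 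To upgrade moment convergence to convergence in law, we use the strengthened hypothesis $\mu_r \leq C^r(2r)!$, equivalent to $\Eb\,e^{\beta|\eta_k|}<\infty$ for some $\beta>0$. The PDE (\ref{evolpdf3}) is the forward Kolmogorov equation of the linear SDE $dV_t = \tfrac{\gamma_k(t)}{2}V_t\,dt + \sqrt{\sigma_k(t)/2}\,dW_t$, whose solution decomposes as $V_t = \Phi(t)V_0 + G_t$ with $G_t$ Gaussian and independent of $V_0$; hence each $\rho_{k_j}(t,\cdot)$ inherits an exponential moment and is in particular determined by its moments (Carleman's condition holds). Tightness of $\{Z^{(L)}\}$ follows at once from the uniform second-moment bound guaranteed by Theorem \ref{main2}, so every subsequential weak limit must have the same joint moments as $Z^{(\infty)}$, and therefore equals it.

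The conceptual heart of the argument lies in the moment identification of the first paragraph: the coefficient $\sigma_k/4$ in the diffusion is calibrated precisely so that $\Delta|v|^{2q}=4q^2|v|^{2q-2}$ reproduces the factor $q^2$ appearing when one differentiates (\ref{defmur}) in $t$, while $\gamma_k/2$ in the drift matches the prefactor $q$ in front of $\gamma_k\mu_q$; this is what forces the Fokker-Planck evolution (\ref{evolpdf3}) to reproduce exactly the combinatorial formula coming from the hierarchy. The remaining analytic ingredients---exponential-moment propagation under an Ornstein--Uhlenbeck-type dynamics, tightness, and moment-determinacy---are standard given the strengthened hypothesis; the one mild subtlety is justifying the integration by parts in the first step, which is automatic from the Gaussian-type spatial decay of $\rho_{k_j}(t,\cdot)$ provided by the SDE representation.
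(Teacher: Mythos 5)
Your proposal is correct and follows essentially the same route as the paper: both arguments reduce Theorem \ref{main3} to the method of moments (moment convergence from Theorem \ref{main2}, plus moment-determinacy under the hypothesis $\mu_r\leq C^r(2r)!$, which the paper packages as Lemma \ref{convlaw}), with the substantive step being the identification of the radial moments of the Fokker--Planck solution with the quantities $\mu_q(t,k)$ from (\ref{defmur}). The only difference is organizational: the paper defines $\rho_k$ by its moments and verifies (\ref{evolpdf3}) through a transport equation for the generating function $L_k(t,\xi)=\int e^{i\xi|v|^2}\rho_k\,\mathrm{d}v$, whereas you start from the PDE solution and match the moment recursion $\partial_t m_q=q^2\sigma_k m_{q-1}+q\gamma_k m_q$ against the one satisfied by (\ref{defmur}) --- an equivalent computation carried out in the opposite direction.
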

{The factorization structure in (\ref{evolpdf5}) is a consequence of propagation of chaos, which has been established in Theorem \ref{main2}; thus the main feature of Theorem \ref{main3} is the evolution of the individual density (\ref{evolpdf3})}. It appears that this equation has only been discovered fairly recently in the physics literature (see \cite{JN, CLN}, and Section 6.6 of \cite{Nazarenko}).

Note that in the Gaussian case (Theorem \ref{main}) we have $\rho_*(v)=\pi^{-1}e^{-|v|^2}$. Then the solution to (\ref{evolpdf3}) equals $\rho_k(t,v)=(\pi n(t,k))^{-1}e^{-|v|^2/n(t,k)}$, so by (\ref{evolpdf5}), the limit distribution is given by independent Gaussians with variance $n(t,k)$, which provides another manifestation of the propagation of Gaussianity. Other solutions to (\ref{evolpdf3}) can be obtained and analyzed using the method of characteristics in Fourier space, see \cite{CJKN}.

%{\color{blue} We remark that, although the value of the order $R:=\sum_{j=1}^r(p_j+q_j)$ in Theorem \ref{main2} and the value of $r$ in Theorem \ref{main3}
\subsection{The wave kinetic hierarchy}\label{sec:wkh0}
By taking $p_j=1$ in (\ref{approximation2}) or (\ref{approximation2new}) we obtain the limits \begin{equation}\label{mixedmoment}n_r(t,k_1,\cdots,k_r):=\lim_{L\to\infty}\mathbb E \bigg( \prod_{j=1}^r\left|\widehat{u}(t,k_j)\right|^{2}\bigg).\end{equation} These limit quantities are conjectured to solve an infinite hierarchy of equations called the \emph{wave kinetic hierarchy} (WKH), which is a linear system for symmetric functions $n_r=n_r(t,k_1,\cdots,k_r)$, and has the form
\begin{equation}\label{WKH}\tag{WKH}
\begin{split}
\partial_t n_r(t, k_1, \cdots, k_r)&=\sum_{j=1}^r\int_{(\Rb^d)^3}\dirac(\ell_1-\ell_2+\ell_3-k_j)\cdot \dirac(|\ell_1|_\beta^2-|\ell_2|_\beta^2+|\ell_3|_\beta^2-|k_j|_\beta^2)\,\mathrm{d}\ell_1\mathrm{d}\ell_2\mathrm{d}\ell_3\\
&\!\!\!\!\!\!\!\!\!\!\!\!\!\!\!\!\!\!\!\!\!\!\!\!\!\!\!\!\!\!\times \bigg[ n_{r+2}(t, k_1, \cdots, k_{j-1}, \ell_1, \ell_2, \ell_3, k_{j+1},\cdots, k_r)+n_{r+2}(t, k_1, \ldots, k_{j-1}, \ell_1, k_j, \ell_3, k_{j+1},\cdots, k_r)\\
& \!\!\!\!\!\!\!\!\!\!\!\!\!\!\!\!\!\!\!\!\!\!\!\!\!\!\!\!\!\!-n_{r+2}(t, k_1, \cdots, k_{j-1}, k_j, \ell_2, \ell_3, k_{j+1},\cdots, k_r)-n_{r+2}(t, k_1, \cdots, k_{j-1}, \ell_1, \ell_2, k_j ,k_{j+1},\cdots, k_r)\bigg].
\end{split}
\end{equation} This hierarchy is the analog of Boltzmann and Gross-Pitaevski hierarchies, and is formally derived in recent works such as Chibarro \emph{et al.} \cite{CDJR1,CDJR2}, Eyink-Shi \cite{EShi} and Newell-Nazarenko-Biven \cite{NNB}, though it also follows from much earlier works including the foundational work of Peierls, see \cite{Peierls, BP, Nazarenko}.

The key property of (\ref{WKH}) is \emph{factorizability}: factorized initial data of form $(n_r)_{\mathrm{in}}(k_1, \cdots, k_r)=\prod_{j=1}^r n_{{\mathrm{in}}} (k_j)$ leads to factorized solutions of form $n_r(t, k_1, \ldots, k_r)=\prod_{j=1}^r n(t,k_j)$ where $n(t,k)$ solves (\ref{wke}) with initial data $n_{\mathrm{in}}$. This follows from direct calculations together with a suitable uniqueness theorem, which is recently proved by Rosenzweig and Staffilani in \cite{RosStaff}.

In the above sense, we can view (\ref{WKH}) as a generalization of (\ref{wke}) that allows for dependent Fourier modes. Indeed, suppose the initial data $u_{\mathrm{in}}$ of (\ref{nls}) is given by (\ref{data}) with $\widehat{u_{\mathrm{in}}}(k)$ being independent for different $k$, then Theorem \ref{main2} implies that the limit (\ref{mixedmoment}) will be a factorized solution to (\ref{WKH}) with factorized initial data, which is in fact the tensor product of the solution to (\ref{wke}). However, if $u_{\mathrm{in}}$ does not have independent Fourier modes, then the initial data $(n_r)_{\mathrm{in}}(k_1,\cdots,k_r)$, i.e. (\ref{mixedmoment}) at time $0$, will not have factorized form, in which case (\ref{mixedmoment}) at time $t$ is conjectured to be a more general solution to (\ref{WKH}).

Such scenario may arise, as discussed in Section 1.3 of \cite{RosStaff}, if one considers a hybrid, or {``twice randomized data" problem of (\ref{nls}) as follows: Instead of taking $n_{\mathrm{in}}$ deterministic in \eqref{data}, we choose it randomly according to a probability measure $\zeta$ defined on the space of all nonnegative functions $n_{\mathrm{in}}$, in such a way that new random function $n_{\mathrm{in}}$ is \emph{independent} of the pre-fixed i.i.d. random variables $\{\eta_k\}$. In the case when $\eta_k$ are random phases ($\eta_k(\omega)=e^{i\theta_k(\omega)}$ with $\theta_k$ uniformly distributed on the circle), this process of randomization is referred to as ``Random Phase and Amplitude" assumption in the wave turbulence theory literature, where in this general setup different amplitudes are not necessarily independent. 

In other words, we are choosing a random initial data whose law of distribution (as a probability measure) is given by a suitable average of those specific probability measures which are laws of distribution of random data of form (\ref{data}), i.e. having independent Fourier coefficients. This averaging is achieved by first generating a random nonnegative function $n_{\mathrm{in}}$ according to the probability measure $\zeta$ on the space of all nonnegative functions, and then selecting the random initial data as (\ref{data}) with some pre-fixed i.i.d. random variables $\{\eta_k\}$. Since independent Fourier modes in (\ref{data}) correspond to factorized solutions to (\ref{WKH}), we know, using also the linearity of (\ref{WKH}), that the above process will result in a solution to (\ref{WKH})  which is an average of certain factorized solutions. These are referred to as ``super-statistical solutions" in Eyink-Shi \cite{EShi} and may provide a possible explanation of intermittency in wave turbulence.}

Just like (\ref{wke}), the rigorous derivation of (\ref{WKH}) has been an outstanding open problem. In fact these two problems are closely related; as mentioned in the beginning of this paper, there are many earlier works on similar problems that first derive the corresponding hierarchies and then restrict to factorized solutions to obtain the kinetic equations. In the wave turbulence context, such an approach is theoretically possible but has not yet been successful. Instead, we are following the \emph{exactly opposite} route: we first derive the kinetic equation (\ref{wke}) in \cite{DH}, then apply the same techniques to derive the hierarchy (\ref{WKH}) \emph{a posterori}, in the current paper. So our last main result is the rigorous derivation of (\ref{WKH}) for general non-factorized initial data, which we state as follows.
\begin{thm}[Derivation of (\ref{WKH})]
\label{main4} Fix a positive number $\mathfrak{X}>0$ and a sequence of i.i.d. random variables $\{\eta_k\}$ as in Section \ref{setup} that satisfy the requirements of Theorem \ref{main2}. Suppose $(n_r)_{\mathrm{in}}=(n_r)_{\mathrm{in}}(k_1,\cdots,k_r)$ are nonnegative symmetric functions of $k_j\in\Rb^d\,(1\leq j\leq r)$, such that \begin{equation}\label{bounded}
\|(n_r)_{\mathrm{in}}\|_{\Sf^{40d;r}}:=\sup_{|\alpha_j|,|\beta_j|\leq 40d}\big\|k_1^{\alpha_1}\cdots k_r^{\alpha_r}\partial_{k_1}^{\beta_1}\cdots\partial_{k_r}^{\beta_r}(n_r)_{\mathrm{in}}(k_1,\cdots,k_r)\big\|_{L^2}\leq C_1^r
\end{equation} for some large constant $C_1$ (note $C_1\gtrsim \Xf$). We say $(n_r)_{\mathrm{in}}$ is \emph{admissible}, if for any $r\geq 2$ we have
\begin{equation}\label{admissible}\int_{\Rb^d}(n_r)_{\mathrm{in}}(k_1,\cdots,k_r)\,\mathrm{d}k_r=\Xf\cdot (n_{r-1})_{\mathrm{in}}(k_1,\cdots,k_{r-1}),\quad \int_{\Rb^d}(n_1)_{\mathrm{in}}(k_1)\,\mathrm{d}k_1=\Xf.
\end{equation} 

Consider a probability measure $\zeta$ on the set $\Ac$ of nonnegative functions $m=m(k)$ on $\Rb^d$, which is defined by \begin{equation}\Ac:=\label{measurezeta}\bigg\{m\geq 0:\Rb^d\to \Rb,\quad \|m\|_{\Sf^{40d}}\leq C_1,\quad \int_{\Rb^d}m(k)\,\mathrm{d}k=\Xf\bigg\}.\end{equation} For this $\zeta$, consider the hybrid initial data $u_{\mathrm{in}}$ which is given by (\ref{data}), except that $n_{\mathrm{in}}$ should be replaced by $m$, which is another random variable with values in $\Ac$, such that $m$ is independent with all the $\eta_k$ and the law of $m$ is given by $\zeta$. We say $(n_r)_{\mathrm{in}}$ is \emph{hybrid}, if there exists a $\zeta$ such that for the above choice of $u_{\mathrm{in}}$, it holds that
\begin{equation}\label{initcorr}\Eb\bigg(\prod_{j=1}^r|\widehat{u_{\mathrm{in}}}(k_j)|^2\bigg)=(n_r)_{\mathrm{in}}(k_1,\cdots,k_r)
\end{equation} for any $L$ and any distinct $k_j\in\Zb_L^d\,(1\leq j\leq r)$.

Let $T=\delta\cdot T_{\mathrm{kin}}$ where $\delta$ is as in Theorem \ref{main0} (except $C_1$ is now defined by (\ref{bounded})); the other parameters are as in Theorem \ref{main0}. Then we have the followings.

\begin{enumerate}
\item The sequence $(n_r)_{\mathrm{in}}$ is hybrid if and only if it is admissible; in this case the measure $\zeta$ is unique.
\item Assume $(n_r)_{\mathrm{in}}$ is admissible. Then with the hybrid initial data defined above, the equation (\ref{nls}) has a smooth solution up to time $T$ with probability $\geq 1-L^{-A}$. Moreover, for any fixed $r$ we have
\begin{equation}\label{approximationwkh}\lim_{L\to\infty}\sup_{t\in[0,T]}\sup_{\substack{(k_1,\cdots,k_r)\in(\Zb_L^{d})^r\\ k_i\neq k_j\,(\forall i\neq j)}}\bigg|\Eb\bigg(\prod_{j=1}^r\left|\widehat{u}(t,k_j)\right|^{2}\bigg)-n_r\bigg(\frac{t}{T_{\mathrm{kin}}},k_1,\cdots,k_r\bigg)\bigg|=0,
\end{equation} where $n_r(t,k_1,\cdots,k_r)$ is the unique solution to (\ref{WKH}) constructed in \cite{RosStaff} with initial data $(n_r)_{\mathrm{in}}$. For any $0\leq t\leq \delta$, this solution $(n_r)(t)$ is admissible in the sense of (\ref{admissible}) for the same $\Xf$.
\end{enumerate}
\end{thm}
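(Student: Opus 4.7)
The plan is to reduce Theorem~\ref{main4} to Theorem~\ref{main2} through a conditioning-and-averaging argument: once Part~(1) produces the mixing measure $\zeta$, the hybrid data becomes, conditionally on $m\sim\zeta$, an instance of the setup of Theorem~\ref{main2} with $n_{\mathrm{in}}=m$, and Part~(2) follows by averaging the $L\to\infty$ limit over $\zeta$.

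For Part~(1), the ``hybrid $\Rightarrow$ admissible'' direction is a one-line calculation: independence of $m$ from $\{\eta_k\}$ together with $\Eb|\eta_k|^2=1$ gives $(n_r)_{\mathrm{in}}(k_1,\ldots,k_r)=\Eb_\zeta\bigl[\prod_{j=1}^r m(k_j)\bigr]$ for distinct $k_j$, and integrating in $k_r$ uses $\int m\,\mathrm{d}k=\Xf$ ($\zeta$-a.s.\ from the definition of $\Ac$) to yield (\ref{admissible}). For the converse, I would rescale to densities $p=m/\Xf$: by (\ref{admissible}) and the assumed symmetry, the family $\{(n_r)_{\mathrm{in}}/\Xf^r\}_{r\ge 1}$ is a consistent, exchangeable family of symmetric probability densities on $(\Rb^d)^r$. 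Kolmogorov's extension theorem produces an exchangeable probability measure on $(\Rb^d)^\Nb$, and de Finetti's representation theorem supplies a unique mixing measure $\widetilde\zeta$ on probability densities with $r$-point marginal $\Eb_{\widetilde\zeta}\prod_j p(k_j)$. Pushing forward under $p\mapsto\Xf p=:m$ produces $\zeta$, and its support in $\Ac$ is extracted from (\ref{bounded}) by a moment-to-essential-supremum argument applied on the diagonal: $(n_r)_{\mathrm{in}}(k,\ldots,k)=\Eb_\zeta[m(k)^r]$, and analogous formulae for weighted derivatives, are pointwise controlled (via Sobolev embedding) by the $\Sf^{40d;r}$-bound, so $(\Eb_\zeta m(k)^r)^{1/r}$ and its derivative analogues are uniformly bounded in $r$, forcing $\|m\|_{\Sf^{40d}}\lesssim C_1$ $\zeta$-a.s. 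Uniqueness of $\zeta$ follows from uniqueness at each step.

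For Part~(2), condition on $m\in\Ac$: the data is (\ref{data}) with $n_{\mathrm{in}}=m$, and by the uniform bound $\|m\|_{\Sf^{40d}}\le C_1$ the constants in Theorems~\ref{main0}/\ref{main2}---the threshold $\delta$ and the exceptional probability $L^{-A}$---are uniform in $m\in\Ac$; averaging in $\zeta$ preserves the probability bound. Applying Theorem~\ref{main2} conditionally on $m$ with $p_j=q_j=1$ (so $\mu_1\equiv 1$ and $\mu_{p_j}(t,k_j)=n(t,k_j;m)$, where $n(\cdot,\cdot;m)$ is the WKE solution starting at $m$) gives, uniformly in $m\in\Ac$,
\[
\sup_{t\in[0,T]}\sup_{k_i\ne k_j}\Bigl|\Eb_\eta\Bigl[\prod_{j=1}^r|\widehat u(t,k_j)|^2\,\big|\,m\Bigr]-\prod_{j=1}^r n\bigl(\tfrac{t}{T_{\mathrm{kin}}},k_j;m\bigr)\Bigr|\longrightarrow 0.
\]
The right-hand product is dominated uniformly by $\|m\|_{L^\infty}^r\lesssim C_1^r$, so dominated convergence lets me commute the $L\to\infty$ limit with $\Eb_\zeta$; writing $N_r(t,k_1,\ldots,k_r):=\Eb_\zeta\prod_j n(t,k_j;m)$ gives the limit on the left of (\ref{approximationwkh}). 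Linearity of (\ref{WKH}) together with the factorizability statement used in \cite{RosStaff} (if $n(t,\cdot;m)$ solves (\ref{wke}) then $\prod_j n(t,k_j;m)$ solves (\ref{WKH})) shows $N_r$ solves (\ref{WKH}); by Part~(1) its initial data is $(n_r)_{\mathrm{in}}$, and the uniqueness theorem of \cite{RosStaff} then identifies $N_r$ with $n_r$. Admissibility of $n_r(t)$ for $t\in[0,\delta]$ follows from mass conservation for (\ref{wke}): $\int n(t,k;m)\,\mathrm{d}k=\int m(k)\,\mathrm{d}k=\Xf$ $\zeta$-a.s., which reduces to the same integration argument as in Part~(1).

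The main technical obstacle is demonstrating that the convergence in Theorem~\ref{main2} is genuinely \emph{uniform} over $m\in\Ac$. The proof of Theorem~\ref{main2} rests on the Feynman-diagram machinery of \cite{DH}, and all quantitative constants---bounds on regular couples and regular multi-couples, contributions of non-regular diagrams, estimates on the iteration remainder, and the probability of the exceptional event---must be shown to depend on $n_{\mathrm{in}}$ only through $\|n_{\mathrm{in}}\|_{\Sf^{40d}}$. This uniformity is natural given the structure of the estimates in \cite{DH} (which are polynomial in the Schwartz norms of the data), but verifying it for every ingredient (especially the probabilistic random-data bounds) and tracking it through the hybrid setup is where the actual work lies; once this uniformity is secured, the reduction described above is essentially formal.
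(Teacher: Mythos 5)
Your overall architecture coincides with the paper's: Part (1) is proved via Kolmogorov extension plus Hewitt--Savage/de Finetti, and Part (2) by conditioning on $m$, applying Theorem \ref{main2} with $p_j=q_j=1$ uniformly over $m\in\Ac$, averaging in $\zeta$, and identifying the averaged factorized solution with $n_r$ through the linearity of (\ref{WKH}) and the uniqueness theorem of \cite{RosStaff}; the admissibility of $n_r(t)$ via mass conservation of (\ref{wke}) is also exactly the paper's argument, as is your (correct) identification of the uniformity of the $O(L^{-\nu})$ remainder over $m\in\Ac$ as the key technical point. The one step where you genuinely diverge --- and where there is a gap --- is the proof that the mixing measure $\zeta$ is supported on $\Ac$.

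Your ``moment-to-essential-supremum argument on the diagonal'' has two problems. First, it is circular: Hewitt--Savage produces a mixing measure on the space of \emph{probability measures} on $\Rb^d$, so the quantities $\Eb_\zeta[m(k)^r]$ and $\Eb_\zeta[(k^\alpha\partial^\beta m(k))^r]$ are not defined until one already knows that $\zeta$-a.e.\ $m$ has a smooth density --- which is precisely what this step must establish. Second, even granting a density, restricting $k_1^{\alpha}\cdots k_r^{\alpha}\partial_{k_1}^{\beta}\cdots\partial_{k_r}^{\beta}(n_r)_{\mathrm{in}}$ to the diagonal through Sobolev embedding on $(\Rb^d)^r$ consumes more than $d/2$ derivatives per variable and introduces multiplicative constants, so at best you conclude $\|m\|_{\Sf^{40d-O(d)}}\leq C\,C_1$ rather than $\|m\|_{\Sf^{40d}}\leq C_1$. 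That is not enough: membership in $\Ac$ as defined in (\ref{measurezeta}), with the exact exponent $40d$ and the exact constant $C_1$, is what the ``hybrid'' assertion of Part (1) requires, and it is also what lets you invoke Theorems \ref{main0} and \ref{main2} with the same $\delta$ in Part (2). The paper's Lemma \ref{hewsav} (following Rosenzweig--Staffilani, going back to Spohn) avoids both issues by staying in duality: test the representation against $(\partial_k^\beta k^\alpha\varphi)^{\otimes r}$ for even $r$, bound the result by $(C_1\|\varphi\|_{L^2})^r$ using (\ref{bounded}), apply Chebyshev and let $r\to\infty$ to get $|(m,\partial_k^\beta k^\alpha\varphi)|\leq C_1\|\varphi\|_{L^2}$ $\zeta$-a.s.\ simultaneously over a countable dense family of $\varphi$, and conclude by Riesz representation that $\|k^\alpha\partial_k^\beta m\|_{L^2}\leq C_1$ with no loss. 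You should replace your diagonal argument with this duality argument; the remainder of your reduction then goes through as in the paper.
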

We make two remarks regarding Theorem \ref{main4}. First, the $\Sf^{40d;r}$ norms defined in (\ref{bounded}) are much stronger than the $\Lf_{s,\epsilon}^\infty$ norms defined in \cite{RosStaff}, because of the strong $\Sf^{40d}$ norm used in Theorem \ref{main0}. It may be possible to relax this regularity assumption to match \cite{RosStaff}, but this requires refining the proof of Theorem \ref{main0} (and Theorems \ref{main}--\ref{main3}), which we are not doing here.

Second, the admissibility requirement (\ref{admissible}) seems natural in view of the conclusion (1): anything that actually arises from these hybrid initial data must be admissible. Non-admissible solutions to (\ref{WKH}) do exist, but they are probably not physically meaningful as pointed out in \cite{RosStaff}.
\subsection{Background literature} The proof of Theorems \ref{main}--\ref{main4} are based on the framework introduced in \cite{DH} to prove Theorem \ref{main0}. The latter work comes as a culmination of an extensive research effort over the past years to provide a rigorous justification of the wave kinetic equation starting from the nonlinear dispersive PDEs as first principle \cite{LukSpohn,BGHS2, Faou, DK1, DK2, DKMV, DH0, CG1,CG2}. This is Hilbert's sixth problem for \emph{waves}; its \emph{particle} analog is the rigorous derivation of the Boltzmann equation from Newtonian mechanics (see \cite{Grad, Lanford, GSRT, BGSRS} and references therein). We refer the reader to the introduction of \cite{DH} for a discussion of the developments leading up to it. 

{We should remark on the progress that has happened since the submission of \cite{DH}. First, we mention the work of Staffilani and Tran \cite{ST}. In this work, the authors consider a high ($\geq 14$) dimensional discrete KdV-type equation, with a Stratonovich-type stochastic multiplicative noise, which has the effect of regularly randomizing the phases of the Fourier modes. In the presence of this noise, the authors derive the associated kinetic equation at the kinetic timescale $T_{\mathrm{kin}}$ and in the scaling law $\alpha=L^{-0}$.} %The noise provides an additional randomizing mechanism to the equation, which is absent in the random data case considered here and in \cite{DH}. 
The authors also have a conditional result in the absence of the noise, which assumes that some a priori estimates hold for the solution, and they verify that these conditions are met for some more restrictive sets of initial data.%, {\color{red}which converges to the equilibrium in the limit} (i.e. to a stationary solution of the WKE), indeed satisfies such a priori estimates. 

Another work in this direction is due to Ampatzoglou-Collot-Germain \cite{ACG} which considers the problem of deriving the WKE in an inhomogeneous setting. The authors derive this equation from a quadratic NLS-type equation for short (asymptotically vanishing) timescales, which, similar to \cite{DH0}, is a subcritical version of the critical setting considered here and in \cite{DH}.

Note that the works \cite{BGHS2,CG1,CG2,DH0,DH,DK1, DK2, DKMV,LukSpohn} concern cubic nonlinearities or $4$-wave interactions, while the works \cite{ACG,Faou,ST} concern quadratic nonlinearities or $3$-wave interactions. Both models represent a lot of important physical scenarios. Although the cubic case is considered in the current paper and in \cite{DH}, we believe that the quadratic case can be treated in the same way without much difference in strategy (as exhibited by \cite{ACG}).
\subsection{Idea of the proof}\label{ideas} Before discussing the main ideas, we first review the proof of Theorem \ref{main0} in \cite{DH}. The basic strategy is to perform a high order expansion of the NLS solution in Fourier space as
\begin{equation}\label{expandnls}
\widehat{u}(t,k)=\sum_{n=0}^N \Jc_n(t, k) +\Rc_N(t,k), \qquad k \in \Zb^d_L.
\end{equation}
Here, $N$ is the order of the expansion which diverges appropriately with the size $L$ of the domain, $\Jc_n$ is the $n$-th Picard iterate, and $\Rc_N$ is the remainder. The iterates $\Jc_n$ can be written as the sum of $\Jc_{\Tc}$, where $\Tc$ runs over all ternary trees that have $n$ branches; these are often called Feynman diagrams. To derive (\ref{wke}) in \cite{DH}, one has to compute the asymptotics of the second moments $\mathbb E|\widehat u(t, k)|^2$ which leads to the analysis of the correlations $\mathbb E (\Jc_{\Tc_1} \overline{\Jc_{\Tc_2}})$ for trees $\Tc_1$ and $\Tc_2$ of at most $N$ branches. These expressions naturally lead to the notion of \emph{couples} which consist of \emph{two} ternary trees whose leaves are paired to each other. The key observation is that the leading couples in the expansion take a very special form, which we call \emph{regular couples}, namely they are obtained by appropriately concatenating $(1,1)$-mini couples and mini trees (see Figures \ref{fig:minicpl}--\ref{fig:steps}). The proof in \cite{DH}, as described before, then reduces to (a) establishing the precise asymptotics of the regular couples, which is made possible by their precise, albeit highly complex, structure, (b) showing the the remaining couples are of lower order, which constitutes the heart of the proof, and (c) showing the remainder $\Rc_N$ is also of lower order. 

Now, in Theorem \ref{main}, we are interested in the higher order moments of the solutions, where the order $R$ can be arbitrarily large (or even grow to infinity with $L$). If we perform the same expansion (\ref{expandnls}), then we need to consider expressions of the form \[\mathbb E \big(\Jc_{\Tc_1}(t, k_1)^{\pm} \ldots \Jc_{\Tc_R}(t, k_R)^\pm\big)\] where, as usual, a minus superscript denotes complex conjugation. This leads to the key new concept in the current paper, which we call \emph{gardens}\footnote{This name is partly inspired by the song \emph{Spiritual Garden} of Yukari Tamura (2005).}, that are formed by $R$ trees whose leaves are paired to each other.

In the Gaussian setting of Theorem \ref{main}, gardens are the only new structures that emerge. Since $R$ can be arbitrarily large and may even grow to infinity with $L$, the analysis of gardens of $R$ trees will be a lot more complicated than that of couples of two trees. However, the methodology introduced in \cite{DH}, originally designed to treat couples, is in fact so robust that it can be extended to gardens---even for very large $R$---with some additional twists. Indeed, the leading contributions here come from those gardens that are formed by putting together $R/2$ couples (we call them \emph{multi-couples}), which can be analyzed using the results of \cite{DH}. In particular, as shown in \cite{DH}, only the \emph{regular} multi-couples, where each of the $R/2$ couples is regular, provide the top order contributions; these can be explicitly calculated as in \cite{DH} to match the desired right-hand side expressions, and the rest is of lower order.

As for the gardens that are not multi-couples, we apply the procedure of \cite{DH} (which are defined for couples but can be easily generalized to gardens) and conclude that they are of lower order (Proposition \ref{mainprop}). A few technical differences occur here (such as in combinatorics, cf. Proposition \ref{reconstruction} and Proposition 9.6 of \cite{DH}), but the most important one, which is also the reason why these terms are of lower order, comes from the structure of the molecules (see Section \ref{improvecount}) associated with such gardens. This is stated in Proposition \ref{basemol} (for comparison, we have $\chi=m$ instead of $\chi\leq m-R/2$ for multi-couples), which can be used to establish a power gain in the counting estimates (Proposition \ref{gain}, note the $m-R/2$ in the exponent), and subsequently the lower order bounds.

In the non-Gaussian setting (Theorem \ref{main2}), we need to introduce even more general structures. In fact, gardens appear from dividing the leaves of the $R$ trees as above into two-leaf pairs. In the Gaussian case, due to Isserlis' theorem, only expressions associated with gardens need to be considered; in the non-Gaussian case, we have a substitute of Isserlis' theorem (Lemma \ref{isserlisnew}), which is reminiscent of the cumulant expansions of the moments of random variables, but with the important quantitative estimates included. This leads to the notion of \emph{over-gardens} which are basically the same as gardens but allow pairings of more than two leaves. Again, in this setting, we identify the leading over-gardens (called regular ones) and prove that the complementary set is of lower order. It is here that the non-Gaussianity starts to exhibit itself, as regular overgardens contribute to the leading terms in addition to regular gardens, which explains the difference between \eqref{approximation2} and \eqref{approximation2new}.

In all the proofs above, as well as in \cite{DH}, the leading structures (regular couples, multi-couples and over-gardens) are still highly complex objects, whose number grows exponentially (rather than factorially) in their size. However, their redeeming feature is that one can write down \emph{exact} expressions for them in the kinetic limit which allows to match their contribution, order by order, with the solutions of the kinetic equations that appear in \eqref{wke}, \eqref{wke0}, or \eqref{WKH}.

Finally, Theorem \ref{main3} is a direct consequence of \eqref{approximation2} and \eqref{defmur}, and uniqueness of the moment problem in this setting (i.e. the moments uniquely define the law), see Lemma \ref{convlaw}, and Theorem \ref{main4} basically follows from averaging the results of Theorem \ref{main} in different scenarios, and applying the Hewitt-Savage theorem (see Lemma \ref{hewsav}) to represent arbitrary densities by tensor products.

\smallskip
We remark that the proof in this paper relies heavily on the notions and framework introduced in \cite{DH}. On the other hand, despite a few places where we briefly go over the results and proofs of \cite{DH}, the majority of this paper is devoted to the \emph{new} components needed in the higher order setting. In particular, the gardens we introduce are fundamental objects with important new features (such as Proposition \ref{basemol}), which will play significant roles in future studies of wave turbulence.
\subsection{Organization of the paper} The paper is organized as follows: In Section \ref{prelim} we review the setup and present some reductions to the problem. In Section \ref{sec:sum of DH}, we review the argument in \cite{DH} and the needed results from there. In Section \ref{sec:gardens}, we introduce the notion of gardens, their elementary combinatorial properties, and state the needed estimates to prove Theorem \ref{main}. These estimates are then proved in Sections \ref{irre}--\ref{sec:conclusion}. In Section \ref{sec:nongaussian} we deal with the non-Gaussian case and prove Theorems \ref{main2} and \ref{main3}, and in Section \ref{sec:wkh} we prove Theorem \ref{main4}.
\subsection{Acknowledgements} Yu Deng is supported in part by NSF grant DMS-1900251 and Sloan Fellowship. Zaher Hani is supported in part by NSF grant DMS-1654692 and a Simons Collaboration Grant on Wave Turbulence. The authors thank Sergey Nazarenko and Herbert Spohn for enlightening conversations and pointing out some references. Part of this work was done while the authors were visiting ICERM (Brown University), which they wish to thank for its hospitality. The first author thanks Matthew Rosenzweig for helpful discussions related to Theorem \ref{main4}.

\section{Preliminary reductions} \label{prelim}
\subsection{Reduction of (\ref{nls})} As in \cite{DH} we make the following reductions. Suppose $u$ is a solution to (\ref{nls}), define $a=a_k(t)$ such that
\begin{equation}\label{change}a_k(t)=e^{-\delta\pi iL^2|k|_\beta^2t}\cdot e^{-2i\lambda^2M\delta T_{\mathrm{kin}}t}\cdot\widehat{u}(\delta T_{\mathrm{kin}}\cdot t,k),\end{equation} where $M$ is the conserved mass of $u$, then it solves the equation
\begin{equation}\label{eqnak}
\left\{
\begin{aligned}\partial_ta_k&=\Cc_+(a,\overline{a},a)_k(t),\\
a_k(0)&=(a_k)_{\mathrm{in}}=\sqrt{n_{\mathrm{in}}(k)}\eta_k(\omega),
\end{aligned}
\right.
\end{equation} with the nonlinearity
\begin{equation}\label{akeqn2}\Cc_\zeta(f,g,h)(t):=\frac{\delta}{2L^{d-1}}\cdot (i\zeta)\sum_{k_1-k_2+k_3=k}\epsilon_{k_1k_2k_3}e^{\zeta\delta\pi iL^2\Omega(k_1,k_2,k_3,k)t}f_{k_1}(t)g_{k_2}(t)h_{k_3}(t)
\end{equation} for $\zeta\in\{\pm\}$. Here in (\ref{akeqn2}) and below, the summation is taken over $(k_1,k_2,k_3)\in(\Zb_L^d)^3$, and \begin{equation}\label{defcoef0}\epsilon_{k_1k_2k_3}=
\left\{
\begin{aligned}&1,&&\mathrm{if\ }k_2\not\in\{k_1,k_3\};\\
-&1,&&\mathrm{if\ }k_1=k_2=k_3;\\
&0,&&\mathrm{otherwise},
\end{aligned}
\right.\end{equation} and the resonance factor
\begin{equation}\label{res}
\Omega=\Omega(k_1,k_2,k_3,k):=|k_1|_\beta^2-|k_2|_\beta^2+|k_3|_\beta^2-|k|_\beta^2=2\langle k_1-k,k-k_3\rangle_\beta.\end{equation} Note that $\epsilon_{k_1k_2k_3}$ is always supported in the non-degenerate set \begin{equation}\label{defset}\Sf:=\big\{(k_1,k_2,k_3):\mathrm{\ either\ }k_2\not\in\{k_1,k_3\},\mathrm{\ or\ }k_1=k_2=k_3\big\}.\end{equation} Below we will focus on the system (\ref{eqnak})--(\ref{akeqn2}), with the relevant terms defined in (\ref{defcoef0})--(\ref{defset}), for time $t\in[0,1]$.
\subsection{Reduction of Theorem \ref{main}} By plugging in (\ref{change}) we can reduce Theorem \ref{main} to proving the following bounds
\begin{equation}\label{bound1}\bigg|\Eb\bigg(\prod_{j=1}^r\big(a_{k_j}(t)\big)^{p_j}\big(\overline{a_{k_j}(t)}\big)^{q_j}\bigg)\bigg|\lesssim_R L^{-\nu}
\end{equation} if $p_j\neq q_j$ for some $1\leq j\leq r$, and
\begin{equation}\label{bound2}\bigg|\Eb\bigg(\prod_{j=1}^r|a_{k_j}(t)|^{2p_j}\bigg)-\prod_{j=1}^r(p_j)!n(\delta t, k_j)^{p_j}\bigg|\lesssim_R L^{-\nu},
\end{equation} uniformly in $t\in[0,1]$ and in $k_1,\cdots,k_r\in\Zb_L^d$ satisfying $k_i\neq k_j$, with $\nu>0$ being an absolute constant and the implicit constants depending on $R$, where $R:=(p_1+\cdots+p_r+q_1+\cdots+q_r)/2$.

Note that if $a_k(t)$ solves (\ref{eqnak}) then $e^{i\theta}a_k(t)$ solves the same equation, with the initial data obeying the same law. From this it is easy to deduce that
\[\Eb\bigg(\prod_{j=1}^r\big(a_{k_j}(t)\big)^{p_j}\big(\overline{a_{k_j}(t)}\big)^{q_j}\bigg)=0,\qquad\textrm{if}\ \ p_1+\cdots+p_r\neq q_1+\cdots+q_r.\]

Below we will always assume $p_1+\cdots+p_r= q_1+\cdots+q_r=R$. As we consider the limit $L\to\infty$ with $R$ fixed, we may assume $R\leq \log L$. We shall introduce a simpler notation as follows. For $1\leq j\leq r$, take $p_j$ copies of the variable $k_j$ with sign $+$ and $q_j$ copies of the variable $k_j$ with sign $-$, and rename them as $(k_1^*,\cdots,k_{2R}^*)$ with associated signs $\zeta_j\,(1\leq j\leq 2R)$. For simplicity we will write $k_j$ instead of $k_j^*$ below. Then (\ref{bound1}) and (\ref{bound2}) result from the following unified and more precise estimate, namely
\begin{equation}\label{unibound}\bigg|\Eb\bigg(\prod_{j=1}^{2R}a_{k_j}^{\zeta_j}(t)\bigg)-\sum_\Pc\prod_{\{j,j'\}\in\Pc}\mathbf{1}_{k_j=k_{j'}}\prod_{j}^{(+)}n(\delta t,k_j)\bigg|\lesssim R!\cdot M_{\mathrm{kin}}^R\cdot L^{-\nu}.
\end{equation} Here we denote $z^+=z$ and $z^-=\overline{z}$, and the sum is taken over all partitions $\Pc$ of $\{1,\cdots,2R\}$ into two-element subsets $\{j,j'\}$ such that $\zeta_{j'}=-\zeta_j$. The first product is taken over all $\{j,j'\}\in\Pc$, and the second product is taken over all $1\leq j\leq 2R$ such that $\zeta_j=+$. Finally $M_{\mathrm{kin}}$ is defined as
\begin{equation}\label{defmkin}M_{\mathrm{kin}}:=1+\sup_{t\in[0,1],k\in\Rb^d}|n(\delta t,k)|,
\end{equation} and the implicit constant in (\ref{unibound}) depends only on $(d,\beta,n_{\mathrm{in}})$ but not on $R$.

\medskip
The goal for the rest of the paper is then to prove (\ref{unibound}).
\subsection{Parameters and notations} Most of our parameters and notations are taken from \cite{DH}. First, we fix $\beta\in(\Rb^+)^d\backslash\Zf$, where $\Zf$ is defined by the following lemma.
\begin{lem}[Lemma A.1 of \cite{DH}]\label{generic} There exists a Lebesgue null set $\Zf\subset(\Rb^+)^d$ such that the followings hold for any $\beta=(\beta^1,\cdots,\beta^d)\in(\Rb^+)^d\backslash\Zf$.
\begin{enumerate}
\item For any integers $(K^1,K^2)\neq (0,0)$, we have
\begin{equation}\label{generic1}|\beta^1K^1+\beta^2K^2|\gtrsim(1+|K^1|+|K^2|)^{-1}\log^{-4}(2+|K^1|+|K^2|);
\end{equation}
\item The numbers $\beta^1,\cdots,\beta^d$ are algebraically independent over $\Qb$, and for any $R$ we have
\begin{equation}\label{generic2}
\#\left\{(X,Y,Z)\in(\Zb^d)^3:|X|,|Y|,|Z|\leq R,\,X\neq 0,\,\max(|\langle X,Y\rangle_\beta|,|\langle X,Z\rangle_\beta|)\leq 1\right\}\lesssim R^{3d-4+\frac{1}{6}}.
\end{equation}
\end{enumerate}
\end{lem}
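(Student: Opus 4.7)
The plan is to construct $\Zf$ as a countable union of null sets in $(\Rb^+)^d$: one treating the Diophantine inequality (\ref{generic1}), one treating algebraic dependence, and one treating the counting bound (\ref{generic2}). Restricting to a fixed compact box $B\subset(\Rb^+)^d$ at each step is harmless since $(\Rb^+)^d$ is exhausted by countably many such boxes. The arguments for (\ref{generic1}) and algebraic independence are classical; the counting estimate is the genuinely delicate part.

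For (\ref{generic1}), for each integer $c\geq 1$ I would set
\[
F_c\;:=\;\bigcup_{(K^1,K^2)\in\Zb^2\setminus\{0\}}\Big\{\beta\in B:\,|\beta^1K^1+\beta^2K^2|\leq c^{-1}(1+|K^1|+|K^2|)^{-1}\log^{-4}(2+|K^1|+|K^2|)\Big\}.
\]
Each slab has $d$-dimensional measure in $B$ bounded by a constant times $c^{-1}(1+N)^{-1}\log^{-4}(2+N)/\max(|K^1|,|K^2|)$ with $N=|K^1|+|K^2|$. Summing over pairs at a fixed $N$ using $\sum_{k=1}^{N-1}1/\max(k,N-k)=O(1)$, and then summing $\sum_N N^{-1}\log^{-4}(2+N)<\infty$, yields $|F_c|\lesssim c^{-1}$; hence $\bigcap_c F_c$ is null and is exactly the set of $\beta$ for which (\ref{generic1}) admits no valid implicit constant. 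For the algebraic independence portion, the countable union $\bigcup_P\{\beta:P(\beta)=0\}$ over nonzero $P\in\Qb[x_1,\ldots,x_d]$ is a countable union of proper subvarieties and hence a null set.

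For the counting bound in (\ref{generic2}), the strategy is a first-moment estimate combined with a dyadic Borel--Cantelli argument. Writing $N_R(\beta)$ for the cardinality appearing in (\ref{generic2}), I would bound
\[
\int_B N_R(\beta)\,d\beta\;=\;\sum_{(X,Y,Z)}\big|\{\beta\in B:\,\max(|\langle X,Y\rangle_\beta|,|\langle X,Z\rangle_\beta|)\leq 1\}\big|,
\]
where the sum ranges over the triples specified in (\ref{generic2}). For fixed $X\neq 0$, the two constraints are linear forms in $\beta$ with coefficient vectors $u=(X^iY^i)_i$ and $v=(X^iZ^i)_i$, and the measure of admissible $\beta$ is controlled by $C\min(1/|u\wedge v|,\,1/|u|+1/|v|)$ in the nondegenerate regime (with obvious modifications when $u$ or $v$ vanishes on the support of $X$). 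A dyadic decomposition $|X|\sim 2^a$, $|Y|\sim 2^b$, $|Z|\sim 2^c$ together with a case analysis on the pattern of vanishing and parallelism of $u,v$ reduces matters to elementary lattice-point counts on $\Zb^d$, producing the leading $R^{3d-4}$ term; the additional $R^{1/6}$ absorbs contributions from near-degenerate configurations. A Chebyshev / Borel--Cantelli argument along $R=2^n$ then promotes the expected bound into the almost-sure bound (\ref{generic2}) valid for all $R$ after a harmless adjustment of the implicit constant.

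The hard step will be the near-degenerate case: when $u\parallel v$, so that $Y^i/Z^i$ is the same on the support of $X$, both constraints collapse to essentially one, and the measure gain in the first-moment estimate is only one codimension rather than two. The lost codimension must be recovered by lattice-point counting on the algebraic variety cut out by the parallelism relations $X^iY^jZ^k=X^jY^iZ^k$, where the Diophantine input of part (1) enters to preclude accidental cancellations among rational combinations of $\beta^1,\ldots,\beta^d$. The stratification by the rank of the $3\times d$ matrix $(X,Y,Z)$ and the combinatorial count on each stratum must be executed sharply enough to keep the total loss below $R^{1/6}$, which I expect to be the most technically involved portion of the proof.
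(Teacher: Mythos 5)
The paper does not actually prove this lemma; it is quoted verbatim from \cite{DH} (Appendix A), and the proof there follows essentially the route you outline: a Khinchin-type Borel--Cantelli argument for (\ref{generic1}), a countable union of proper subvarieties for algebraic independence, and for (\ref{generic2}) a first-moment bound on $\int_B N_R(\beta)\,\mathrm{d}\beta$ upgraded by Chebyshev and Borel--Cantelli along dyadic scales $R=2^n$, the exponent $\tfrac16$ being exactly the room needed to absorb the Chebyshev loss plus the small-power losses from degenerate strata. One correction to your closing remarks: part (1) neither enters nor can enter the proof of (2) in this framework, since after integrating over $\beta$ the near-degenerate strata (where $u=(X^iY^i)_i$ and $v=(X^iZ^i)_i$ are parallel, short, or zero) are handled purely by counting lattice triples on the relevant determinantal varieties --- the parallelism relations are $X^iX^j(Y^iZ^j-Y^jZ^i)=0$, not the ones you wrote --- combined with the remaining one-codimension measure gain $\min(1,|u|^{-1})$. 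It is precisely here that $d\geq 3$ is needed: for instance the stratum $u=v=0$ contributes $\sim R^{2d-1}$ triples each of full measure, and $2d-1\leq 3d-4+\tfrac16$ only for $d\geq 3$; with that understood, your stratification-and-counting plan closes the argument.
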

\begin{proof} See \cite{DH}, Lemma A.1.
\end{proof}
Throughout this paper, we will use $C$ to denote any large constant that depends only on the dimension $d$, and use $C^+$ to denote any large constant that depends on $(d,\beta,n_{\mathrm{in}})$; these may differ from line to line, and note in particular that they do \emph{not} depend on the value of $R$ in (\ref{unibound}). The notations $X\lesssim Y$ and $X=O(Y)$ will mean $X\leq C^+Y$ unless otherwise stated.

Recall that $A\geq 40d$ and $\delta$, which is small enough depending on $A$ and $C^+$, are fixed as in Theorem \ref{main}. We also fix $\nu=(100d)^{-1}\ll 1$ and define $N=\lfloor (\log L)^4\rfloor$. Note that the value of $N$ is different from the one in \cite{DH}. As  above we assume $R\leq \log L$. For later purposes we may need slightly larger values (like $2R$), but all our proofs work equally fine as long as $R\leq 2\log L$, which will be satisfied throughout the paper. Note that we do not assume any inequality between $\delta$ and $R$.

We adopt the shorthand notation $k[A]=(k_j)_{j\in A}$ and similarly for other vectors, and also define $\mathrm{d}\alpha[A]=\prod_{j\in A}\mathrm{d}\alpha_j$. We also use multi-indices $\rho$ with the usual notations. Define the time Fourier transform (the meaning of $\widehat{\cdot}$ later may depend on the context)
\[\widehat{u}(\lambda)=\int_\Rb u(t) e^{-2\pi i\lambda t}\,\mathrm{d}t,\quad u(t)=\int_\Rb \widehat{u}(\lambda)e^{2\pi i\lambda t}\,\mathrm{d}\lambda.\] Define the $X^\kappa$ norm for functions $F=F(t,k)$ or $G=G(t,s,k)$ by \[\|F\|_{X^\kappa}=\int_\Rb\langle\lambda\rangle^{\frac{1}{9}}\sup_k\langle k\rangle^{\kappa}|\widehat{F}(\lambda,k)|\,\mathrm{d}\lambda,\quad \|G\|_{X^\kappa}=\int_{\Rb^2}(\langle\lambda\rangle+\langle\mu\rangle)^{\frac{1}{9}}\sup_k\langle k\rangle^{\kappa}|\widehat{G}(\lambda,\mu,k)|\,\mathrm{d}\lambda\mathrm{d}\mu,\] where $\widehat{\cdot}$ denotes the Fourier transform in $t$ or $(t,s)$. In the case when $F$ or $G$ does not depend on $k$, this norm will not depend on $\kappa$ and will be denote by $X$. Define the localized version $X_{\mathrm{loc}}^\kappa$ (and similarly $X_{\mathrm{loc}}$) as 
\[\|F\|_{X_{\mathrm{loc}}^\kappa}=\inf\big\{\|\widetilde{F}\|_{X^\kappa}:\widetilde{F}=F\mathrm{\ for\ }0\leq t\leq 1\big\};\quad \|G\|_{X_{\mathrm{loc}}^\kappa}=\inf\big\{\|\widetilde{G}\|_{X^\kappa}:\widetilde{G}=G\mathrm{\ for\ }0\leq t,s\leq 1\big\}.\] If we will only use the value of $G$ in some set (for example $\{t>s\}$ in Proposition \ref{propreg}), then in the above definition we may only require $\widetilde{G}=G$ in this set. Define the $Z$ norm for function $a=a_k(t)$,
\begin{equation}\label{defznorm}\|a\|_Z^2=\sup_{0\leq t\leq 1}L^{-d}\sum_{k\in\Zb_L^d}\langle k\rangle^{10d}|a_k(t)|^2.
\end{equation}
\section{A brief summary of \cite{DH}}\label{sec:sum of DH} The results of this section are proved in \cite{DH}. Here we state the relevant propositions and definitions that will be needed in the proof below.
\subsection{Trees, couples, and decorations} We first recall the definitions of trees, couples, and decorations, which are drawn directly from \cite{DH}.
\begin{df}[Definition 2.1 in \cite{DH}]\label{deftree} A \emph{ternary tree} $\Tc$ (we will simply say a \emph{tree} below) is a rooted tree where each non-leaf (or \emph{branching}) node has exactly three children nodes, which we shall distinguish as the \emph{left}, \emph{mid} and \emph{right} ones. We say $\Tc$ is \emph{trivial} (and write $\Tc=\bullet$) if it consists only of the root, in which case this root is also viewed as a leaf.

We denote generic nodes by $\nf$, generic leaves by $\lf$, the root by $\rf$, the set of leaves by $\Lc$ and the set of branching nodes by $\Nc$. The \emph{scale} of a tree $\Tc$ is defined by $n(\Tc)=|\Nc|$, so if $n(\Tc)=n$ then $|\Lc|=2n+1$ and $|\Tc|=3n+1$.

A tree $\Tc$ may have sign $+$ or $-$. If its sign is fixed then we decide the signs of its nodes as follows: the root $\rf$ has the same sign as $\Tc$, and for any branching node $\nf\in\Nc$, the signs of the three children nodes of $\nf$ from left to right are $(\zeta,-\zeta,\zeta)$ if $\nf$ has sign $\zeta\in\{\pm\}$. Once the sign of $\Tc$ is fixed, we will denote the sign of $\nf\in\Tc$ by $\zeta_\nf$. Define the conjugate $\overline{\Tc}$ of a tree $\Tc$ to be the same tree but with opposite sign.
\end{df}
\begin{df}[Definition 2.2 in \cite{DH}]\label{defcpl}A \emph{couple} $\Qc$ is an unordered pair $(\Tc^+,\Tc^-)$ of two trees $\Tc^\pm$ with signs $+$ and $-$ respectively, together with a partition $\Ps$ of the set $\Lc^+\cup\Lc^-$ into $(n+1)$ pairwise disjoint two-element subsets, where $\Lc^\pm$ is the set of leaves for $\Tc^\pm$, and $n=n^++n^-$ where $n^\pm$ is the scale of $\Tc^\pm$. This $n$ is also called the \emph{scale} of $\Qc$, denoted by $n(\Qc)$. The subsets $\{\lf,\lf'\}\in\Ps$ are referred to as \emph{pairs}, and we require that $\zeta_{\lf'}=-\zeta_\lf$, i.e. the signs of paired leaves must be opposite. If both $\Tc^\pm$ are trivial, we call $\Qc$ the \emph{trivial couple} (and write $\Qc=\times$).

For a couple $\Qc=(\Tc^+,\Tc^-,\Ps)$ we denote the set of branching nodes by $\Nc^*=\Nc^+\cup\Nc^-$, and the set of leave by $\Lc^*=\Lc^+\cup\Lc^-$; for simplicity we will abuse notation and write $\Qc=\Tc^+\cup\Tc^-$. We also define a \emph{paired tree} to be a tree where \emph{some} leaves are paired to each other, according to the same pairing rule for couples. We say a paired tree is \emph{saturated} if there is only one unpaired leaf (called the \emph{lone leaf}). In this case the tree forms a couple with the trivial tree $\bullet$.
\end{df}
\begin{df}[Definition 2.3 in \cite{DH}]\label{defdec} A \emph{decoration} $\Ds$ of a tree $\Tc$ is a set of vectors $(k_\nf)_{\nf\in\Tc}$, such that $k_\nf\in\Zb_L^d$ for each node $\nf$, and that \[k_\nf=k_{\nf_1}-k_{\nf_2}+k_{\nf_3},\quad \mathrm{or\ equivalently}\quad \zeta_\nf k_\nf=\zeta_{\nf_1}k_{\nf_1}+\zeta_{\nf_2}k_{\nf_2}+\zeta_{\nf_3}k_{\nf_3},\] for each branching node $\nf\in\Nc$, where $\zeta_\nf$ is the sign of $\nf$ as in Definition \ref{deftree}, and $\nf_1,\nf_2,\nf_3$ are the three children nodes of $\nf$ from left to right. Clearly a decoration $\Ds$ is uniquely determined by the values of $(k_\lf)_{\lf\in\Lc}$. For $k\in\Zb_L^d$, we say $\Ds$ is a $k$-decoration if $k_\rf=k$ for the root $\rf$.

Given a decoration $\Ds$, we define the coefficient
\begin{equation}\label{defcoef}\epsilon_\Ds:=\prod_{\nf\in\Nc}\epsilon_{k_{\nf_1}k_{\nf_2}k_{\nf_3}}\end{equation} where $\epsilon_{k_1k_2k_3}$ is as in (\ref{defcoef0}). Note that in the support of $\epsilon_\Ds$ we have that $(k_{\nf_1},k_{\nf_2},k_{\nf_3})\in\Sf$ for each $\nf\in\Nc$. We also define the resonance factor $\Omega_\nf$ for each $\nf\in\Nc$ by
\begin{equation}\label{defres}\Omega_\nf=\Omega(k_{\nf_1},k_{\nf_2},k_{\nf_3},k_\nf)=|k_{\nf_1}|_\beta^2-|k_{\nf_2}|_\beta^2+|k_{\nf_3}|_\beta^2-|k_\nf|_\beta^2.\end{equation}

A decoration $\Es$ of a couple $\Qc=(\Tc^+,\Tc^-,\Ps)$, is a set of vectors $(k_\nf)_{\nf\in\Qc}$, such that $\Ds^\pm:=(k_\nf)_{\nf\in\Tc^\pm}$ is a decoration of $\Tc^\pm$, and moreover $k_\lf=k_{\lf'}$ for each pair $\{\lf,\lf'\}\in\Ps$. We define $\epsilon_\Es:=\epsilon_{\Ds^+}\epsilon_{\Ds^-}$, and define the resonance factors $\Omega_\nf$ for $\nf\in\Nc^*$ as in (\ref{defres}). Note that we must have $k_{\rf^+}=k_{\rf^-}$ where $\rf^\pm$ is the root of $\Tc^\pm$; again we say $\Es$ is a $k$-decoration if $k_{\rf^+}=k_{\rf^-}=k$. Finally, we can define decorations $\Ds$ of paired trees, as well as $\epsilon_\Ds$ and $\Omega_\nf$ etc., similar to the above.
\end{df}
\begin{df}[Definition 4.2 in \cite{DH}]\label{defreg} Define a \emph{regular couple} to be a couple formed from the trivial couple $\times$ by repeatedly applying one of the steps $\Ab$ and $\Bb$, where in step $\Ab$ one replaces a pair of leaves with a $(1,1)$-mini couple, and in step $\Bb$ one replaces a node with a mini tree. Here a $(1,1)$-mini couple is a couple formed by two trees each of scale $1$ such that no siblings are paired, and a mini tree is a saturated paired tree of scale $2$ such that no siblings are paired. See Figures \ref{fig:minicpl}--\ref{fig:steps}.
We also define a \emph{regular tree} to be a saturated paired tree $\Tc$, such that $\Tc$ forms a \emph{regular} couple with the trivial tree. This is equivalent to the definition in Remark 4.15 of \cite{DH}, namely that $\Tc$ can be obtained from a regular chain  by replacing each leaf pair with a regular couple. Here a regular chain (see Definition 4.6 of \cite{DH}) is defined to be the result of repeatedly applying step $\Bb$ at a branching node or the lone leaf starting from the trivial tree $\bullet$. Note that the scale of a regular couple or a regular tree is always even.
  \begin{figure}[h!]
  \includegraphics[scale=.45]{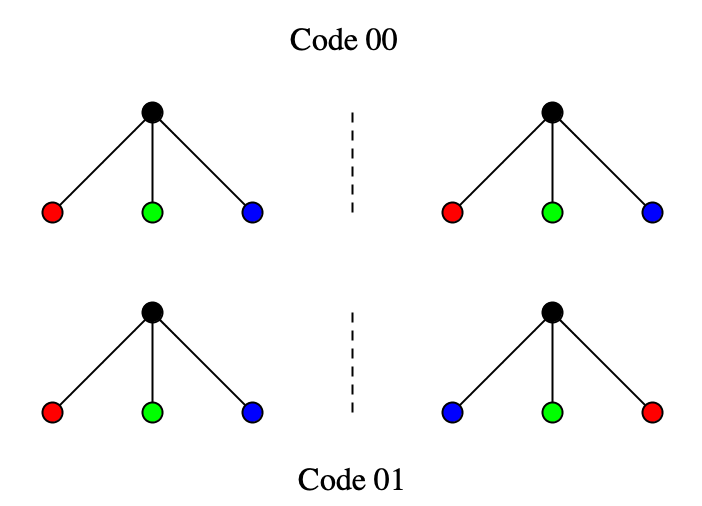}
  \caption{A $(1,1)$-mini couple. Here and below two leaves of same color are paired There are two possibilities indicated by codes $00$ and $01$ as in \cite{DH}.}
  \label{fig:minicpl}
\end{figure} 
  \begin{figure}[h!]
  \includegraphics[scale=.45]{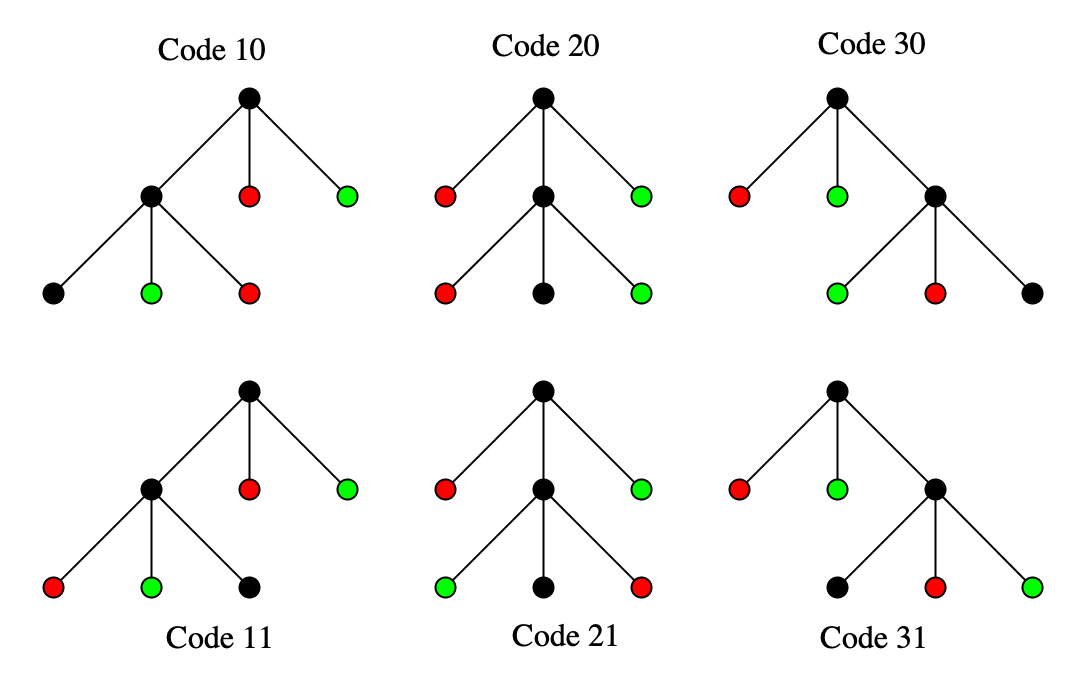}
  \caption{A mini tree. There are six possibilities indicated by codes $10\sim 31$ as in \cite{DH}.}
  \label{fig:minitree}
\end{figure} 
  \begin{figure}[h!]
  \includegraphics[scale=.45]{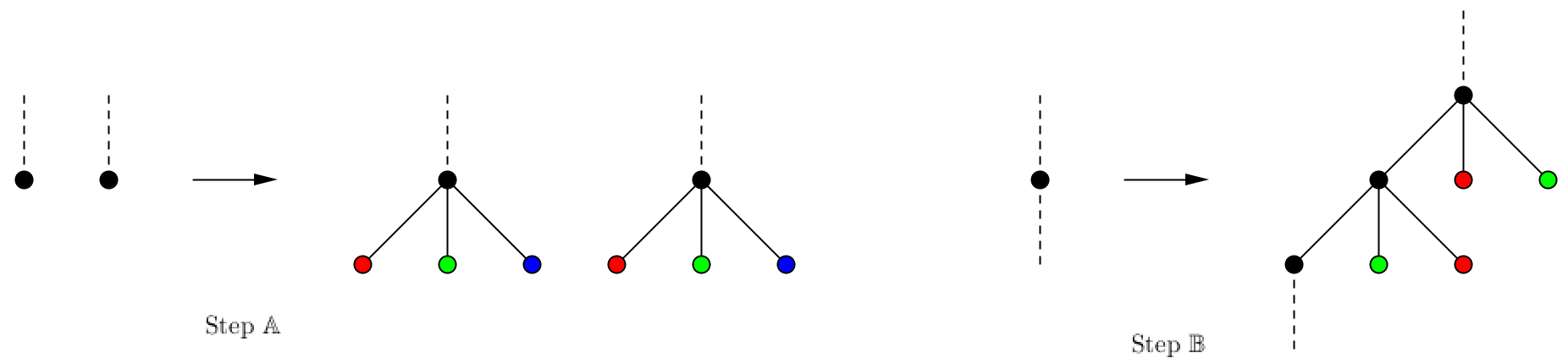}
  \caption{Steps $\Ab$ and $\Bb$ as in Definition \ref{defreg}.}
  \label{fig:steps}
\end{figure} 
\end{df}
\begin{prop}\label{regcount} The number of regular couples and regular trees of scale $n$ is at most $C^n$.
\end{prop}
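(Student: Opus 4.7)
The plan is to establish the exponential bound $b_n, a_n \leq C^n$ by a canonical encoding argument: every regular couple of scale $n = 2m$ will be determined by $m$ pieces of block-choice data of bounded complexity, and such data are enumerated by at most $K^m \leq C^n$.

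The first step is to exploit the block structure built into Definition \ref{defreg}. A regular couple of scale $n$ is obtained from the trivial couple $\times$ by exactly $m = n/2$ applications of step $\Ab$ or $\Bb$, each of which inserts one elementary \emph{block}: a $(1,1)$-mini couple (step $\Ab$) or a mini tree (step $\Bb$). Each such block has bounded combinatorial complexity: a mini couple has at most $3 + 2 + 6 = 11$ potential attachment slots (leaf pairs, branching nodes, leaves of the block), a mini tree has $2 + 2 + 5 = 9$, and there are $2 + 6 = 8$ distinct block types in total (Figures \ref{fig:minicpl}--\ref{fig:minitree}). I would argue that for any regular couple $\Qc$, its set of blocks together with the containment relation (``block $B_1$ was inserted at a slot of block $B_2$'') is intrinsic to $\Qc$, in the sense that different valid construction sequences yielding the same $\Qc$ give rise to the same block poset. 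To make this precise for the subtle case of a pair of leaves $\{\lf,\lf'\}$ that has been modified by intermediate $\Bb$-insertions on one of its leaves, one traces the pair back to the earliest block in which it (or its predecessor) appeared, and takes that block to be the parent slot of any subsequent $\Ab$-insertion there.

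The second step is to encode each $\Qc$ as a triple $(T(\Qc), \tau, \sigma)$ where $T(\Qc)$ is a rooted plane forest on $m$ vertices (one per block, with at most two roots corresponding to the at-most-two top-level blocks on $\Tc^+$ and $\Tc^-$), $\tau$ labels each node by its block type (at most $8$ values), and $\sigma$ labels each non-root node by its attachment slot in the parent block (at most $11$ values). Because each block has at most $11$ slots and each slot can host at most one direct child block (the slot is ``consumed'' by the insertion), the forest $T(\Qc)$ has out-degree $\leq 11$. Given $(T(\Qc), \tau, \sigma)$, one recovers $\Qc$ by starting from $\times$, inserting the root blocks at the top, and recursively inserting each descendant block at the slot specified by $\sigma$ in the order dictated by a BFS of $T(\Qc)$; hence the encoding is injective.

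The third step is the count. The number of rooted plane forests on $m$ vertices with out-degree bounded by $11$ is at most $12^m$ (encode by a DFS word of length $O(m)$ over an alphabet of size $12$, via the standard bijection with Dyck-like sequences). Multiplying by the $8^m$ choices for $\tau$ and the $11^m$ choices for $\sigma$, the total count is at most $(12 \cdot 8 \cdot 11)^m = C^{n/2} \leq C^n$ for a suitable constant. The same framework handles regular trees: a regular tree is a saturated paired tree forming a regular couple with $\bullet$, and its block decomposition is built from $\bullet$ by the same $\Ab/\Bb$ steps, so the identical encoding argument gives $a_n \leq C^n$.

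The main obstacle will be rigorously verifying that the block poset $T(\Qc)$ is intrinsic, especially when pair-slots are ``shared'' between blocks after chains of $\Bb$-insertions on one of their leaves; as indicated above, this is resolved by the trace-back convention, and then bookkeeping shows that every non-root block has a well-defined parent and a well-defined parent slot, validating the injectivity of the encoding.
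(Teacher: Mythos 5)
Your encoding argument is sound in outline, but note first that the paper does not prove this proposition at all: it cites \cite{DH}, Corollary 4.9, where the bound is extracted from the canonical structure theorem for regular couples (every regular couple is a regular chain with regular couples substituted at its leaf pairs, leading to a recursion for the counting sequence with exponentially bounded solution). Your route --- enumerating insertion histories directly --- is a legitimate alternative, and in fact the structure theorem is essentially the ``canonical block poset'' you are trying to build by hand.

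Two remarks on your write-up. First, the ``main obstacle'' you identify (canonicity of the block poset) is not actually needed: for an \emph{upper} bound you only need the decoding map from codes to regular couples to be surjective, i.e.\ that every regular couple is the decoding of \emph{at least one} admissible triple $(T,\tau,\sigma)$. Any construction sequence supplies such a triple, so you may skip the uniqueness discussion entirely. Second, the one claim that genuinely needs care is ``each slot hosts at most one direct child block.'' This is true for step $\Ab$ (the leaf pair is consumed) but false for step $\Bb$ under the naive convention: after inserting a mini tree at a node $\nf$, the node $\nf$ survives (re-attached at the lone leaf), and step $\Bb$ may be applied at $\nf$ again and again --- this is exactly how regular chains arise (Definition \ref{defreg}), and it would make the out-degree at that slot unbounded. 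The fix is the convention that when a block is inserted at a node already sitting below an earlier-inserted mini tree, its parent in $T(\Qc)$ is declared to be that mini tree, attached at its lone-leaf slot; then chains become paths in the forest, out-degrees stay bounded, and your $C^{n/2}$ count goes through. With these two adjustments the proof is complete and self-contained, which is arguably more informative than the paper's bare citation.
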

\begin{proof} See \cite{DH}, Corollary 4.9.
\end{proof}
\begin{lem}\label{auxlem} Let $\Tc$ be a tree of scale $n$. For any node $\nf\in\Tc$ define $\mu_\nf$ to be the number of leaves in the subtree rooted at $\nf$. Then, for any $\nf\in\Nc$, consider the values of $\mu_\mf$ where $\mf$ is a child of $\nf$, and let the \emph{second maximum} of these values be $\mu_\nf^\circ$. Then we have
\begin{equation}\label{auxineq}\prod_{\nf\in\Nc}\mu_\nf^\circ\leq \frac{3^n}{2n+1}.\end{equation}
\end{lem}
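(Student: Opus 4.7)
The plan is to prove the bound by strong induction on the scale $n$, exploiting the recursive structure of a ternary tree. The base case $n=0$ is trivial since both sides of the inequality equal $1$.

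For the inductive step, I would fix a tree $\Tc$ of scale $n\geq 1$ with root $\rf$, and let $\Tc_1,\Tc_2,\Tc_3$ denote the three subtrees rooted at the children of $\rf$. Writing $n_i$ for the scale of $\Tc_i$ and $\ell_i:=2n_i+1$ for its number of leaves, we have $n_1+n_2+n_3=n-1$ and $\ell_1+\ell_2+\ell_3=2n+1$. Without loss of generality, relabel so that $\ell_1\geq \ell_2\geq \ell_3$; then by definition the root contributes the factor $\mu_\rf^\circ=\ell_2$ to the product. Applying the induction hypothesis to each $\Tc_i$, one obtains
\[
\prod_{\nf\in\Nc(\Tc)}\mu_\nf^\circ \;=\; \ell_2\cdot\prod_{i=1}^3\prod_{\nf\in\Nc(\Tc_i)}\mu_\nf^\circ \;\leq\; \ell_2\cdot\frac{3^{n_1}}{\ell_1}\cdot\frac{3^{n_2}}{\ell_2}\cdot\frac{3^{n_3}}{\ell_3} \;=\; \frac{3^{n-1}}{\ell_1\ell_3}.
\]

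It therefore suffices to verify the purely elementary inequality $\ell_1+\ell_2+\ell_3\leq 3\ell_1\ell_3$. Since $\ell_2\leq \ell_1$, we have $\ell_1+\ell_2+\ell_3\leq 2\ell_1+\ell_3$, and
\[
3\ell_1\ell_3-(2\ell_1+\ell_3)\;=\;\ell_1(3\ell_3-2)-\ell_3 \;\geq\; \ell_1-\ell_3 \;\geq\; 0,
\]
where we used $\ell_3\geq 1$ (so $3\ell_3-2\geq 1$) and $\ell_1\geq \ell_3$. This closes the induction and yields the stated bound.

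There is no real obstacle here: the only subtle point is choosing the right quantity to induct on, namely the \emph{weighted} bound $(2n+1)\prod_\nf \mu_\nf^\circ\leq 3^n$, since the factor $2n+1=\ell_1+\ell_2+\ell_3$ is exactly what allows the induction to telescope cleanly through the three children. The identification $\mu_\rf^\circ=\ell_2$ (the \emph{second} maximum, not the maximum) is precisely what makes the factor $\ell_2$ cancel with the $1/\ell_2$ coming from the largest subtree's inductive bound, which is the key combinatorial mechanism driving the estimate.
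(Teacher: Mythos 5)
Your proof is correct: the strong induction on the scale, with the second-maximum factor $\ell_2$ at the root cancelling against the $1/\ell_2$ from the largest subtree and the remaining inequality $\ell_1+\ell_2+\ell_3\leq 3\ell_1\ell_3$ verified elementarily, closes cleanly (including the base case $n=0$ and the degenerate cases where some children are leaves, i.e.\ $\ell_i=1$). The paper itself only cites \cite{DH}, Lemma 6.6, whose argument is essentially this same induction, so your proposal matches the intended proof.
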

\begin{proof}
See \cite{DH}, Lemma 6.6.
\end{proof}
\subsection{Expansion ansatz and regular couples} The following results are taken from \cite{DH}.
\begin{prop}\label{propexp} For any tree $\Tc$, define
\begin{equation}\label{defjt}(\Jc_\Tc)_k(t)=\bigg(\frac{\delta}{2L^{d-1}}\bigg)^n\widetilde{\zeta}(\Tc)\sum_{\Ds}\epsilon_\Ds\int_\Dc\prod_{\nf\in\Nc}e^{\pi i\zeta_\nf\delta L^2\Omega_\nf t_\nf}\,\mathrm{d}t_\nf\prod_{\lf\in\Lc}\sqrt{n_{\mathrm{in}}(k_\lf)}\cdot\eta_{k_\lf}^{\zeta_\lf}(\omega).
\end{equation} Here in (\ref{defjt}), $n$ is the scale of $\Tc$, $\widetilde{\zeta}(\Tc)=\prod_{\nf\in\Nc}(i\zeta_\nf)$, $\Ds$ runs over all $k$-decorations of $\Tc$, and $\Dc$ is the domain 
\begin{equation}\label{defdomain}\Dc=\left\{t[\Nc]:0<t_{\nf'}<t_\nf<t,\mathrm{\ whenever\ \nf'\ is\ a\ child\ of\ \nf}\right\}.
\end{equation}

We may expand $a_k(t)$ as 
\begin{equation}\label{expand}a_k(t)=\sum_{n=0}^N(\Jc_n)_k(t)+b_k(t);\quad(\Jc_n)_k(t)=\sum_{n(\Tc^+)=n}(\Jc_{\Tc^+})_k(t),
\end{equation} where the second sum is taken over all trees $\Tc^+$ of sigh $+$ such that $n(\Tc^+)=n$. 

The remainder $b$ satisfies the equation
\begin{equation}\label{eqnbk}b=\Rc+\Ls b+\Bs(b,b)+\Cs(b,b,b),
\end{equation} where the terms on the right hand side are defined by
\begin{multline}\label{eqnbk1.5}\Rc=\sum_{(0)}\Ic\Cc_+(u,\overline{v},w),\,\,\,\Ls b=\sum_{(1)}\Ic\Cc_+(u,\overline{v},w),\\\Bs(b,b)=\sum_{(2)}\Ic\Cc_+(u,\overline{v},w),\,\,\, \Cs(b,b,b)=\Ic\Cc_+(b,\overline{b},b).\end{multline} In (\ref{eqnbk1.5}) the summations are taken over $(u,v,w)$, each of which being either $b$ or $\Jc_n$ for some $0\leq n\leq N$; moreover in the summation $\sum_{(j)}$ for $0\leq j\leq 2$, exactly $j$ inputs in $(u,v,w)$ equals $b$, and in the summation $\sum_{(0)}$ we require that the sum of the three $n$'s in the $\Jc_n$'s is at least $N$.

Lastly, uniformly in $t\in[0,1]$ and $k\in\Zb_L^d$, we have that 
\begin{equation}\label{twopoint}\bigg|\sum_{0\leq n_1,n_2\leq N}\Eb\big((\Jc_{n_1})_k(t)\overline{(\Jc_{n_2})_k(t)}\big)-n(\delta t,k)\bigg|\lesssim L^{-\nu}.
\end{equation}
\end{prop}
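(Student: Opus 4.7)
The plan is to establish Proposition \ref{propexp} in three stages: (i) derive the closed-form expression \eqref{defjt} for $\Jc_\Tc$ by iterating the Duhamel formula for \eqref{eqnak}; (ii) obtain the remainder equation \eqref{eqnbk} by substituting the expansion ansatz into \eqref{eqnak} and matching terms; (iii) invoke the main analysis of \cite{DH} to obtain the two-point asymptotic \eqref{twopoint}.

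For (i), I would iterate the Duhamel form $a_k(t) = (a_k)_{\mathrm{in}} + \int_0^t \Cc_+(a,\overline{a},a)_k(s)\,\mathrm{d}s$, substituting the right-hand side recursively into the three slots of $\Cc_+$. Each application of $\Cc_\zeta$ at a node $\nf$ with sign $\zeta_\nf$ produces the prefactor $\delta/(2L^{d-1})\cdot i\zeta_\nf$ from \eqref{akeqn2}, the oscillatory weight $e^{\pi i\zeta_\nf\delta L^2\Omega_\nf t_\nf}$, and the coefficient $\epsilon_{k_{\nf_1}k_{\nf_2}k_{\nf_3}}$ from \eqref{defcoef0}; multiplying these across the $n$ branching nodes yields the global weight $(\delta/(2L^{d-1}))^n\,\widetilde{\zeta}(\Tc)\cdot\epsilon_\Ds$ of \eqref{defjt}. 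The nested time-integrals assemble into the ordered simplex $\Dc$ of \eqref{defdomain}, and the terminal leaves inherit the initial data $\sqrt{n_{\mathrm{in}}(k_\lf)}\,\eta_{k_\lf}^{\zeta_\lf}$. The child-sign rule of Definition \ref{deftree} is forced because the middle argument of $\Cc_+$ is $\overline{a}$, flipping the sign once on descent through the second slot while preserving it on the first and third.

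For (ii), I would differentiate the truncated ansatz $a = \sum_{n\le N}\Jc_n + b$ and substitute into $\partial_t a = \Cc_+(a,\overline{a},a)$. A direct computation from \eqref{defjt} yields the recursion $\partial_t \Jc_n = \sum_{n_1+n_2+n_3=n-1}\Cc_+(\Jc_{n_1},\overline{\Jc_{n_2}},\Jc_{n_3})$ for $n\ge 1$, together with $(\Jc_0)_k(t)\equiv (a_k)_{\mathrm{in}}$. Expanding trilinearly in the three slots and applying this recursion, the purely-$\Jc$ terms with scales summing to $\leq N-1$ cancel exactly against $\partial_t\sum_{n\le N}\Jc_n$, and the remainder splits naturally as: purely-$\Jc$ terms with total scale $\geq N$, giving $\Rc$; terms with exactly one $b$-factor, giving $\Ls b$; terms with two $b$-factors, giving $\Bs(b,b)$; and the all-$b$ term $\Cs(b,b,b)$. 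Time-integrating from $0$, and using $b(0)=0$ (since $\Jc_n(0)=0$ for $n\ge 1$ because the simplex $\Dc$ degenerates at $t=0$), produces the fixed-point identity \eqref{eqnbk} with the pieces described in \eqref{eqnbk1.5}.

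The genuinely hard step is (iii), which is precisely the content of the main theorem of \cite{DH}. Under the Gaussian assumption, Isserlis' theorem reduces $\Eb(\Jc_{n_1}\overline{\Jc_{n_2}})$ to a sum over couples $\Qc=(\Tc^+,\Tc^-,\Ps)$ built by pairing the leaves, each couple contributing an oscillatory lattice sum/integral indexed by its decorations (Definition \ref{defdec}). The deep content of \cite{DH} is to identify the regular couples of Definition \ref{defreg} as carrying the $O(1)$ leading contribution, whose sum over all scales (bounded combinatorially by Proposition \ref{regcount}) reproduces the WKE value $n(\delta t,k)$ up to $O(L^{-\nu})$, and to execute an intricate counting/cancellation argument bounding the contribution of every non-regular couple and of the truncation tail beyond scale $N$ by $L^{-\nu}$. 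Thus \eqref{twopoint} is simply Theorem \ref{main0} rephrased at the Picard-expansion level. This is the only substantive step; (i) and (ii) are formal manipulations, and the whole obstacle is packaged into \cite{DH}.
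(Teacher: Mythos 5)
Your proposal is correct and follows essentially the same route as the paper, which simply cites \cite{DH} (Sections 2.2 and 5.1 for the Duhamel/tree expansion and the remainder equation, and Section 12 for \eqref{twopoint}); your steps (i)--(ii) are exactly the formal manipulations carried out there, and you correctly identify \eqref{twopoint} as a restatement of Theorem \ref{main0} at the level of the Picard iterates. The only detail you omit, which the paper flags, is that the present choice $N=\lfloor(\log L)^4\rfloor$ differs from the $N=\lfloor\log L\rfloor$ of \cite{DH}, but the proof is unaffected as long as $N\ll L^{\delta^2}$.
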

\begin{proof} The expansion (\ref{expand}) is introduced in Sections 2.2.1 and 2.2.2 in \cite{DH}, and (\ref{defjt}) follows by combining the formulas in Section 5.1 of \cite{DH}. The equation (\ref{eqnbk}) for $b$ is deduced in Section 2.2.2 of \cite{DH}. Finally (\ref{twopoint}) is a qualitative version of Theorem \ref{main0}, which is proved in Section 12 of \cite{DH}. Note that here we are choosing $N=\lfloor (\log L)^4\rfloor$ instead of $N=\lfloor \log L\rfloor$, but the proof is not affected as long as (say) $N\ll L^{\delta^2}$.
\end{proof}
\begin{prop}\label{propreg} For any couple $\Qc$, define
\begin{equation}\label{defkq}\Kc_\Qc(t,s,k)=\bigg(\frac{\delta}{2L^{d-1}}\bigg)^n\zeta^*(\Qc)\sum_{\Es}\epsilon_\Es\int_\Ec\prod_{\nf\in\Nc^*}e^{\pi i\zeta_\nf\delta L^2\Omega_\nf t_\nf}\,\mathrm{d}t_\nf\prod_{\lf\in\Lc^*}^{(+)}n_{\mathrm{in}}(k_\lf).
\end{equation} Here in (\ref{defkq}), $n$ is the scale of $\Qc$, $\zeta^*(\Qc)=\prod_{\nf\in\Nc^*}(i\zeta_\nf)$, $\Es$ runs over all $k$-decorations of $\Qc$, the last product is taken over all $\lf\in\Lc^*$ with sign $+$, and $\Ec$ is the domain 
\begin{equation}\label{defdomain2}\Ec=\left\{t[\Nc^*]:0<t_{\nf'}<t_\nf,\mathrm{\ when\ \nf'\ is\ a\ child\ of\ \nf};\ t_\nf<t\mathrm{\ for\ \nf\in\Nc^+},t_\nf<s\mathrm{\ for\ \nf\in\Nc^-}\right\}.
\end{equation} Now suppose $\Qc$ is a \emph{regular} couple with scale $2n$ where $n\leq (\log L)^{50}$, then there exist a function $(\Kc_\Qc)_{\mathrm{app}}(t,s,k)$, which is the sum of at most $2^n$ terms, such that each term has the form $\delta^n\cdot \Jc\Ac(t,s)\cdot\Mc(k)$ (with possibly different $\Jc\Ac$ and $\Mc$ for different terms), and that
\begin{equation}\label{regbound1}\|\Jc\Ac\|_{X_{\mathrm{loc}}}\leq (C^+)^n,\quad \sup_{|\rho|\leq 40d}|\partial^\rho\Mc(k)|\leq (C^+)^n\langle k\rangle^{-40d}\quad \mathrm{for\ each\ term};
\end{equation}
\begin{equation}\label{regbound2}\|\Kc_\Qc(t,s,k)-(\Kc_\Qc)_{\mathrm{app}}(t,s,k)\|_{X_{\mathrm{loc}}^{40d}}\leq (C^+\delta)^nL^{-2\nu}.
\end{equation}

Similarly, for any regular tree $\Tc$ with lone leaf $\lf_*$, define \begin{equation}\label{defkqnew}\Kc_\Tc^*(t,s,k)=\bigg(\frac{\delta}{2L^{d-1}}\bigg)^n\widetilde{\zeta}(\Tc)\sum_{\Ds}\int_\Dc\prod_{\nf\in\Nc}e^{\pi i\zeta_\nf\delta L^2\Omega_\nf t_\nf}\,\mathrm{d}t_\nf\prod_{\lf\in\Lc\backslash\{\lf_*\}}^{(+)}n_{\mathrm{in}}(k_\lf).
\end{equation} Here in (\ref{defkqnew}), $n$ is the scale of $\Tc$, $\widetilde{\zeta}(\Tc)=\prod_{\nf\in\Nc}(i\zeta_\nf)$, $\Ds$ runs over all $k$-decorations of $\Qc$, the last product is taken over all $\lf\in\Lc\backslash\{\lf_*\}$ with sign $+$, and $\Dc$ is the domain 
\begin{equation}\label{defdomainnew}\Dc=\left\{t[\Nc]:0<t_{\nf'}<t_\nf<t,\mathrm{\ when\ \nf'\ is\ a\ child\ of\ \nf};\ t_{(\lf_*)^p}>s\right\}
\end{equation} where $(\lf_*)^p$ is the parent of $\lf_*$. Suppose $\Tc$ has scale $2n$ where $n\leq (\log L)^{20}$, then there exist a function $(\Kc_\Tc^*)_{\mathrm{app}}(t,s,k)$, which is the sum of at most $2^n$ terms, such that each term has the form $\delta^n\cdot \Jc\Ac^*(t,s)\cdot\Mc^*(k)$ (with possibly different $\Jc\Ac^*$ and $\Mc^*$ for different terms), and that
\begin{equation}\label{regbound3}\|\Jc\Ac^*\|_{X_{\mathrm{loc}}}\leq (C^+)^n,\quad \sup_{|\rho|\leq 40d}|\partial^\rho\Mc^*(k)|\leq (C^+)^n\quad\mathrm{for\ each\ term};
\end{equation}
\begin{equation}\label{regbound4}\|\Kc_\Tc^*(t,s,k)-(\Kc_\Tc^*)_{\mathrm{app}}(t,s,k)\|_{X_{\mathrm{loc}}^{0}}\leq (C^+\delta)^nL^{-2\nu}.
\end{equation}
\end{prop}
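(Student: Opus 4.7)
\medskip
\noindent\textbf{Proof proposal.} The plan is to prove both parts by induction on the scale $n$, exploiting the recursive construction of regular couples and regular trees given in Definition \ref{defreg}. A regular couple $\Qc$ of scale $2n$ is obtained from a regular couple $\Qc'$ of scale $2n-2$ by one application of step $\Ab$ (grafting a $(1,1)$-mini couple at a paired leaf) or step $\Bb$ (replacing a branching node with a mini tree). The formula \eqref{defkq} for $\Kc_\Qc$ therefore factorizes: the extra two branching nodes introduced in the last step contribute two extra time variables $t_{\nf_1},t_{\nf_2}$ and an extra momentum summation over the new leaves, while the remainder of the integrand, viewed after ``freezing'' the output of the mini-structure, reproduces $\Kc_{\Qc'}$ with its root variables set to the internal variables of the mini-structure. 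Concretely, I would write $\Kc_\Qc(t,s,k)$ as a multilinear kernel acting on $\Kc_{\Qc'}$ via an operator $\Ms$ corresponding to the mini-couple/mini-tree just added, and then analyze $\Ms$ in isolation.

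\medskip
For the base case $n=0$, $\Kc_{\times}(t,s,k)=n_{\mathrm{in}}(k)$, which already has the form $\delta^0\cdot 1\cdot n_{\mathrm{in}}(k)$, satisfying \eqref{regbound1} trivially. For the inductive step I would compute the explicit ``kinetic limit'' of a single $(1,1)$-mini couple or mini tree operator $\Ms$. The key point is that in the regime $\lambda=L^{(d-1)/2}$, $T_{\mathrm{kin}}=L^2/2$, the time integral of $e^{\pi i\zeta\delta L^2\Omega t}$ over $0<t<1$ combined with a Riemann sum $L^{-d}\sum_{k\in\Zb_L^d}$ converges, as $L\to\infty$, to an integral against the measure $\dirac(\Omega)\,\mathrm{d}k$, producing exactly a kinetic collision structure as in \eqref{wkenon}. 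The two ``residues'' associated with solving $\Omega=0$ (codes $00/01$ for mini couples and $10$--$31$ for mini trees, consolidated appropriately) produce the bound of at most $2^n$ terms after iterating. Each individual application of $\Ms$ contributes a factor $\delta$ (from the time normalization) multiplied by a bounded time factor $\jc\ac(t,s)$ with $\|\jc\ac\|_{X_{\mathrm{loc}}}\leq C^+$ and a Schwartz spatial factor $\mc(k)$ with $\sup_{|\rho|\leq 40d}|\partial^\rho\mc(k)|\leq C^+\langle k\rangle^{-40d}$; these come from derivatives of $n_{\mathrm{in}}$ and from smooth cutoffs in frequency arising from the delta-function residues. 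Multiplying $n$ such factors and absorbing into $\Jc\Ac$ and $\Mc$ yields \eqref{regbound1}.

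\medskip
For the error bound \eqref{regbound2} I would estimate the discrepancy at each single step: replacing the Riemann sum by its continuous-integral limit incurs an error of size $L^{-\theta}$ for some $\theta$ slightly larger than $2\nu$ (from the usual Poisson summation / stationary phase analysis in the $X^{40d}$ norm, using genericity of $\beta$ via Lemma \ref{generic} to rule out near-resonances). Likewise, replacing the indicator of the time simplex in \eqref{defdomain2} by independent integrals up to $t,s$ is accurate up to $L^{-\theta}$ in $X_{\mathrm{loc}}$. By writing the difference telescopically, $\Kc_\Qc-(\Kc_\Qc)_{\mathrm{app}}=\sum_{j=1}^n\Ms_n\cdots \Ms_{j+1}(\Ms_j-\Ms_j^{\mathrm{app}})(\Kc_{\Qc_{j-1}})_{\mathrm{app}}$, and using the multiplicative property of $X$-type norms under convolution in $t$ and multiplication in $k$, each term is bounded by $(C^+\delta)^n\cdot L^{-\theta}$; summing in $j$ absorbs the factor $n\leq (\log L)^{50}$ into a redefinition of $\nu$ (using $\theta>2\nu$). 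This gives \eqref{regbound2}. The argument for regular trees is identical once we note that a regular tree is a regular chain with regular-couple insertions at the leaves (Remark 4.15 of \cite{DH}), so the same inductive skeleton applies, producing \eqref{regbound3} and \eqref{regbound4}.

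\medskip
The principal obstacle is the error propagation across many inductive steps: each individual step is only accurate to $L^{-\theta}$, while the number of steps $n$ can be as large as $(\log L)^{50}$, and the combinatorial multiplicity of the intermediate terms grows like $2^n$. Keeping the final error at $(C^+\delta)^nL^{-2\nu}$ forces one to (i) gain a fixed power $L^{-\theta}$ with $\theta$ strictly greater than $2\nu$ at each reduction, which requires the genericity assumption on $\beta$ to quantitatively control the Riemann sum--integral error, and (ii) verify that multiplication by $\Ms_j$ in the $X_{\mathrm{loc}}$ norm costs only $C^+\delta$ rather than something larger. Both of these ingredients are precisely what the $X$ norm was designed to deliver in \cite{DH}, and the remaining verification is bookkeeping along the tree structure.
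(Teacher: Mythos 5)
The paper does not actually prove this proposition: its entire proof is the citation ``This follows from Propositions 6.7 and 6.10 of [DH]'' (plus the remark that enlarging the scale bound to $(\log L)^{50}$ is harmless). Your architecture --- peel off one mini-couple or mini-tree at a time using the recursive construction in Definition \ref{defreg}, compute the kinetic limit of each block, and telescope the single-block errors --- is indeed the broad strategy of Section 6 of [DH], so at the structural level you are on the right track.

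Two of your steps, however, are wrong or unsupported as stated, and they are exactly where the substance of the [DH] proof lies. First, you cannot replace the nested time simplex $\Ec$ by independent integrals ``up to $L^{-\theta}$'': dropping the ordering constraints $t_{\nf'}<t_\nf$ changes the answer by an $O(1)$ relative amount on the near-resonant set that carries the main contribution (already for scale $2$ one gets $t^2/2$ versus $t^2$). In [DH] the simplex is handled \emph{exactly}, by Fourier transforming in the time variables and tracking the resulting convolution structure --- this is precisely why the $X$ norms are weighted $L^1$ norms of the time Fourier transform, and the tensor-product form $\delta^n\Jc\Ac(t,s)\Mc(k)$ of the leading term emerges only because the $\dirac(\Omega)$ limit kills the oscillatory coupling between the time and momentum integrations; the non-decoupled pieces must be shown to go into the $L^{-2\nu}$ error, not assumed away. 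Second, the conversion of the lattice sums into integrals against $\dirac(\Omega)\,\mathrm{d}k$ with a quantitative $L^{-\theta}$ error is not a ``Poisson summation / stationary phase'' fact: at the endpoint scaling $\alpha=L^{-1}$ the phase $\delta L^2\Omega$ ranges over a discrete set of spacing comparable to $\delta$, and the required equidistribution is a number-theoretic lattice-point count resting on the Diophantine condition (\ref{generic2}); together with the $C^+\delta$ operator bound for each block (which encodes that the resonant set has the correct codimension), this is the analytic core of Propositions 6.7 and 6.10 of [DH] and cannot be relegated to bookkeeping. A smaller point: for a \emph{fixed} regular couple the codes $00/01$ and $10$--$31$ are already determined by $\Qc$, so the $2^n$ terms in $(\Kc_\Qc)_{\mathrm{app}}$ do not come from summing over codes but from the binary splitting of each resolved time integral into its two boundary contributions.
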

\begin{proof} This follows from Propositions 6.7 and 6.10 of \cite{DH}. Whether the upper bound for $n$ is $(\log L)^6$ or $(\log L)^{50}$ does not affect the proof (again, as long as $n\ll L^{\delta^2}$).
\end{proof}
\section{Gardens} \label{sec:gardens}
\subsection{Structure of gardens} The key concept of this paper is a generalization of couples, which we call \emph{gardens}.
\begin{df}\label{defgarden} Given a sequence $(\zeta_1,\cdots,\zeta_{2R})$, where $\zeta_j\in\{\pm\}$ and exactly $R$ of them are $+$, we define a \emph{garden} $\Gc$ of signature $(\zeta_1,\cdots,\zeta_{2R})$, to be an ordered collection of trees $(\Tc_1,\cdots,\Tc_{2R})$, such that $\Tc_j$ has sign $\zeta_j$ for $1\leq j\leq 2R$, together with a partition $\Ps$ of the set of leaves in all $\Tc_j$ into two-element subsets (again called pairings) such that the two paired leaves have opposite signs, see Figure \ref{fig:garden}. The \emph{width} of the garden is defined to be $2R$, which is always an even number. The \emph{scale} $n(\Gc)$ of a garden $\Gc$ is the sum of scales of all $\Tc_j\,(1\leq j\leq 2R)$. We denote $\Lc^*=\Lc_1\cup\cdots\cup\Lc_{2R}$ to be the set of leaves and $\Nc^*=\Nc_1\cup\cdots\cup\Nc_{2R}$ to be the set of branching nodes, where $\Lc_j$ and $\Nc_j$ are the sets of leaves and branching nodes of $\Tc_j$.

Note that a garden of width $2$ is just a couple. If the set $\{1,\cdots,2R\}$ can be partitioned into two-element subsets such that for each such subset $\{j,j'\}$, the leaves in $\Tc_{j}$ and $\Tc_{j'}$ are all paired with each other (in particular $\zeta_{j'}=-\zeta_{j}$), then we say this garden is a \emph{multi-couple}. In this case, this garden is formed by $R$ couples $(\Tc_{j},\Tc_{j'})$. If each of them is a regular couple then we say the multi-couple is \emph{regular}. A trivial garden is a garden when all $\Tc_j$ are trivial trees; note that it is always a regular multi-couple (formed by $R$ trivial couples). If in a garden $\Gc$, no two trees $\Tc_j$ and $\Tc_{j'}$ have all their leaves paired with each other, then we say the garden is \emph{mixed}.
\end{df}
\begin{df}\label{defdecgarden} Given a garden $\Gc$, a \emph{decoration} of $\Gc$, denoted by $\Is$, is a set of vectors $(k_\nf)_{\nf\in\Gc}$ where $\nf$ runs over all nodes of $\Gc$, such that $(k_\nf)_{\nf\in\Tc_j}$ is a decoration of $\Tc_j$ for each $1\leq j\leq 2R$, and $k_{\nf'}=k_{\nf}$ for each pair of leaves $\{\nf,\nf'\}$. Given vectors $(k_1,\cdots,k_{2R})$, we say a decoration is a $(k_1,\cdots,k_{2R})$-decoration, if $k_{\rf_j}=k_j$ for each $1\leq j\leq 2R$, where $\rf_j$ is the root of $\Tc_j$. For any branching node $\nf\in \Nc^*$, define $\Omega_\nf$ as in (\ref{defres}), see Figure \ref{fig:garden}. We also define $\epsilon_\Is=\prod_{j=1}^{2R}\epsilon_{\Ds_j}$, where $\epsilon_{\Ds_j}$ is defined as in (\ref{defcoef}), with $\Ds_j$ being the restriction of $\Is$ to $\Tc_j$.
\end{df}
  \begin{figure}[h!]
  \includegraphics[scale=.45]{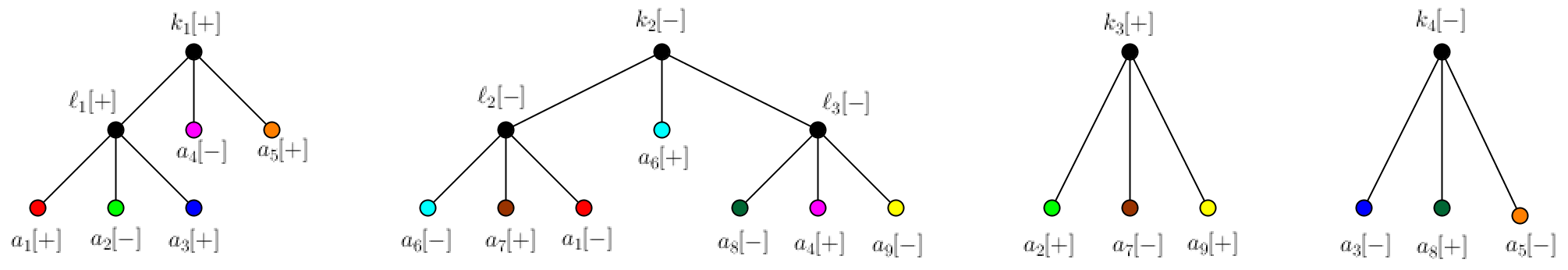}
  \caption{An example of a garden (Definition \ref{defgarden}) of width $4$ and scale $7$, together with a decoration (Definition \ref{defdecgarden}). The signs of nodes are also indicated; the signature is $(+,-,+,-)$.}
  \label{fig:garden}
\end{figure} 
\begin{df} Define the steps $\Ab$ and $\Bb$ for gardens in the same way as for couples in Definition \ref{defreg}, see Figure \ref{fig:steps}. Define a garden $\Gc$ to be \emph{prime} if it is not obtained from any other garden by performing steps $\Ab$ or $\Bb$.
\end{df}
\begin{prop}\label{defskeleton} For any garden $\Gc$ there exists a unique prime garden $\Gc_{sk}$ such that $\Gc$ is obtained from $\Gc_{sk}$ by applying steps $\Ab$ and $\Bb$. This $\Gc_{sk}$ is called the \emph{skeleton} of $\Gc$. Finally, $\Gc_{sk}$ is a trivial garden, if and only if $\Gc$ is a regular multi-couple.
\end{prop}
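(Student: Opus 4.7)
The plan is to view the forward operations $\Ab$ and $\Bb$ as generating a rewrite system on gardens, and to work with the corresponding reverse operations $\overline{\Ab}$ (contracting a $(1,1)$-mini couple inside $\Gc$ back to a single pair of paired leaves) and $\overline{\Bb}$ (contracting a mini tree back to a single node, namely its lone leaf). A garden is prime exactly when neither reverse operation applies anywhere in it. For the existence part, I would observe that each reverse step strictly decreases the scale $n(\Gc)$, which is a nonnegative integer, so iterating the available reverse steps must terminate after finitely many applications at a prime garden.

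For uniqueness, I would invoke a diamond-lemma argument: prove that the reverse rewrite system is locally confluent, meaning that whenever two distinct reverse operations are both available at $\Gc$, they can be performed in either order to produce the same garden. Combined with the termination observation above, Newman's lemma then yields a unique normal form, which we define to be $\Gc_{sk}$. Local confluence reduces to showing that any two simultaneously available reverse operations act on node-disjoint local substructures and therefore commute. The key input is the rigidity of the local shapes: a reducible mini couple is identified by an unordered pair of branching nodes $\{\nf_1,\nf_2\}$, each with three leaf children, whose six leaf children are paired only among themselves; a reducible mini tree is identified by a branching node $\nf$ with exactly one branching child $\mf$ (itself having three leaf children) and two leaf children, with the resulting five leaves carrying two internal pairs and a single lone leaf, and no sibling pairs. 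I would then check by case analysis that two distinct such identifications cannot share a branching node. For example, a mini tree root $\nf$ has a unique branching child $\mf$ whose children are all leaves, so $\mf$ cannot be a mini tree root (which would require $\mf$ to have a branching child) and cannot be a mini-couple root paired with some $\mf'$ (since that would force the three leaf children of $\mf$ to be paired across to $\mf'$, contradicting the mini tree pairing pattern at $\nf$). Nested and partially overlapping configurations are ruled out by analogous incompatibilities between the required children types and the required pairing patterns; hence distinct reverse operations act disjointly and commute.

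For the characterization in the last sentence, I would argue by induction on the number of forward steps from a trivial garden. In one direction, a regular multi-couple is by definition an ordered collection of $R$ regular couples, each constructed from the trivial couple by steps $\Ab$ and $\Bb$; concatenating these constructions exhibits the whole garden as obtained from the trivial multi-couple (which is exactly the trivial garden) by $\Ab$ and $\Bb$ operations, so $\Gc_{sk}$ is the trivial garden by uniqueness. For the converse, assume $\Gc_{sk}$ is trivial and induct on the number of forward steps from $\Gc_{sk}$ to $\Gc$. Step $\Bb$ acts inside a single tree, hence inside a single constituent couple; step $\Ab$ acts on a pair $\{\lf,\lf'\}$ which, in a multi-couple, must lie within a single constituent couple, and the six new leaves of the inserted mini couple are paired among themselves, so the entire new pairing remains within that constituent couple. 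In both cases, applying the step to a regular multi-couple yields another regular multi-couple, since the modified constituent couple is again regular by construction.

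The main technical obstacle is the combinatorial case analysis for local confluence, especially ruling out every way in which two reducible substructures could overlap. This amounts to enumerating the possible roles a single branching node (or its children) could play simultaneously in two distinct mini couples, two distinct mini trees, or one of each, and deriving a contradiction in each case from the rigidity of the children-type and pairing constraints. Once this enumeration is complete, the abstract rewrite-system formalism cleanly delivers the existence, uniqueness, and the multi-couple characterization stated in the proposition.
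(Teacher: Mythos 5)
Your proposal is correct and follows essentially the same route as the paper: both establish existence by terminating the collapsing (reverse) operations and uniqueness via the commutation of any two distinct collapses, your Newman's-lemma packaging being just a reformulation of the paper's induction on scale. Your additional case analysis for local confluence and your explicit induction for the ``trivial skeleton iff regular multi-couple'' equivalence are fine and merely spell out details the paper leaves to the reader.
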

\begin{proof} The proof is the same as Proposition 4.13 of \cite{DH}. For the convenience of the reader we present the proof here. Denote the inverse operations of $\Ab$ and $\Bb$ by $\overline{\Ab}$ and $\overline{\Bb}$, where one collapses a $(1,1)$-mini couple or a mini tree to a leaf pair or a single node. To prove existence of $\Gc_{sk}$, by definition, one just needs to repeatedly apply $\overline{\Ab}$ and $\overline{\Bb}$ until no such operation is possible.

To prove uniqueness of $\Gc_{sk}$, we just make one key observation: if $\Gc$ contains two basic objects (i.e. $(1,1)$-mini couples or mini trees), and let $\overline{\Db}_1$ and $\overline{\Db}_2$ be the inverse operations associated with them, then $\overline{\Db}_1\overline{\Db}_2=\overline{\Db}_2\overline{\Db}_1$. In fact, this just shows that collapsing one of the basic objects does not affect the other, which can be directly verified by definition.

Now we can prove the uniqueness of $\Gc_{sk}$ by induction. The base case is easy, suppose uniqueness is true for $\Gc$ of smaller scale, then for any $\Gc$ we shall look for $(1,1)$-couples and mini trees (Definition \ref{defreg}). If there is none then $\Gc$ is already prime; if there is only one, then we apply $\overline{\Ab}$ or $\overline{\Bb}$ to collapse it and apply induction hypothesis for the resulting garden. Suppose there are more than one, then we apply $\overline{\Ab}$ or $\overline{\Bb}$ to collapse any one of them and apply induction hypothesis for the resulting garden. The final result does not depend on the first $\overline{\Ab}$ or $\overline{\Bb}$ we choose, because any two such steps, which can be performed for the original $\Gc$, must commute as proved above. Therefore $\Gc_{sk}$ is unique.
\end{proof}
\begin{prop}\label{structuregarden}
Suppose $\Gc$ is a garden with skeleton $\Gc_{sk}$. Then $\Gc$ is formed from $\Gc_{sk}$ by replacing each leaf pair with a regular couple and each branching node with a regular tree, see Figure \ref{fig:gardensk}. This representation is unique.
  \begin{figure}[h!]
  \includegraphics[scale=.45]{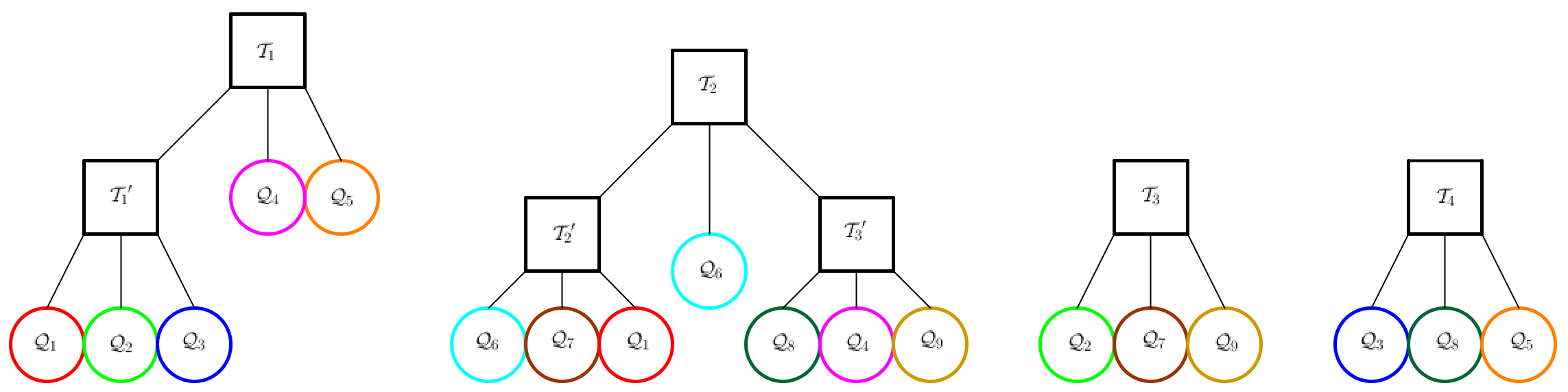}
  \caption{A garden whose skeleton is the garden in Figure \ref{fig:garden}, see Proposition \ref{structuregarden}. Here each $\Tc_j$ and $\Tc_j'$ represents a regular tree, and each $\Qc_j$ represents a regular couple.}
  \label{fig:gardensk}
\end{figure} 
\end{prop}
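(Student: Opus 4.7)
\textbf{Proof plan for Proposition \ref{structuregarden}.} The plan is to prove both existence and uniqueness of the claimed decomposition by induction on $n(\Gc)$, piggy-backing on the proof of Proposition \ref{defskeleton}. The base case is when $\Gc$ is prime, so $\Gc=\Gc_{sk}$; then each leaf pair of $\Gc_{sk}$ is the trivial couple $\times$ (a regular couple of scale $0$) and each branching node is the trivial tree $\bullet$ (a regular tree of scale $0$), so the claimed representation is automatic.

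For the inductive step (existence), suppose $\Gc\neq\Gc_{sk}$, so $\Gc$ contains at least one $(1,1)$-mini couple or mini tree. Let $\Gc'$ be the garden obtained by applying the inverse operation $\overline{\Ab}$ or $\overline{\Bb}$ to collapse one such basic object. Then $n(\Gc')<n(\Gc)$ and by the uniqueness of the skeleton, $(\Gc')_{sk}=\Gc_{sk}$. By the inductive hypothesis, $\Gc'$ is obtained from $\Gc_{sk}$ by inserting regular couples $\Qc_i$ at its leaf pairs and regular trees $\Tc_j$ at its branching nodes. The basic object we collapsed sits at a leaf pair or a branching node of $\Gc'$, and therefore lies entirely inside exactly one of these $\Qc_i$ or $\Tc_j$. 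Re-expanding the inverse step replaces that regular couple $\Qc_i$ (resp.\ regular tree $\Tc_j$) by the object obtained by applying one additional step $\Ab$ or $\Bb$ to it; by Definition \ref{defreg} this is again a regular couple (resp.\ regular tree), so $\Gc$ inherits the required decomposition.

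For uniqueness, Proposition \ref{defskeleton} already canonically determines $\Gc_{sk}$ from $\Gc$, hence its set of leaf-pair slots and branching-node slots. Given the structure $\Gc_{sk}$, each regular filling $\Qc_i$ or $\Tc_j$ can be recovered from $\Gc$ by tracking the canonical sequence of $\overline{\Ab},\overline{\Bb}$ collapses leading from $\Gc$ down to $\Gc_{sk}$ and restricting to those steps that act inside that particular slot. The commutativity of disjoint inverse steps observed in the proof of Proposition \ref{defskeleton} ensures that this recovery does not depend on the order in which we perform the collapses, so the filling at each slot is determined unambiguously. The main obstacle in making this rigorous is verifying that the basic object collapsed at each inductive step lies strictly inside a single slot of $\Gc_{sk}$ and never straddles two slots; this reduces to the local nature of the building blocks in Definition \ref{defreg} (a $(1,1)$-mini couple attaches at a single leaf pair, a mini tree at a single branching node), which is precisely the same structural observation that powered the uniqueness argument for $\Gc_{sk}$ itself.
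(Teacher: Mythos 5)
Your existence argument is essentially the paper's, run in the collapse direction rather than the expansion direction: one checks that each step $\Ab$ or $\Bb$ acts at a single leaf pair or single node, hence inside exactly one filling, and that applying $\Ab$ or $\Bb$ to a regular couple or regular tree yields again a regular couple or regular tree. That half is fine.

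The uniqueness half has a genuine gap. You propose to recover the filling at each slot by ``restricting the canonical sequence of $\overline{\Ab},\overline{\Bb}$ collapses to those steps that act inside that slot,'' but this is circular: deciding which slot of $\Gc_{sk}$ a given collapse ``acts inside'' presupposes the very partition of $\Gc$ into fillings whose uniqueness you are trying to establish. Moreover, the commutativity observed in the proof of Proposition \ref{defskeleton} is only a local-confluence statement guaranteeing that the normal form $\Gc_{sk}$ is unique; two collapse sequences from $\Gc$ to $\Gc_{sk}$ need not be reorderings of one another, since new $(1,1)$-mini couples and mini trees are created as others are collapsed, so there is no well-defined ``canonical set of collapses'' to restrict to a slot. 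And even if such a canonical decomposition could be built, uniqueness requires showing that an \emph{arbitrary} decomposition coincides with it. The paper closes this with a static, non-procedural characterization: it introduces the intermediate garden $\Gc_{int}$ (the skeleton with regular chains inserted at branching nodes but with leaf pairs not yet expanded), proves that $\Gc_{int}$ contains no nontrivial regular sub-couple, and deduces that the regular couples replacing the leaf pairs of $\Gc_{int}$ are precisely the \emph{maximal} regular sub-couples of $\Gc$; collapsing these recovers $\Gc_{int}$ unambiguously, and $\Gc_{int}$ then determines both the chains and the couples. Some structural input of this kind is needed: the ``local nature of the building blocks'' rules out a single basic object straddling two slots, but it does not by itself rule out two decompositions drawing the boundary between adjacent fillings in different places.
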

\begin{proof} The proof is basically the same as Proposition 4.14 of \cite{DH}. To prove existence, we can induct on the scale of $\Gc$. The base case $\Gc=\Gc_{sk}$ is obvious. Suppose the result is true for $\Gc$, and let $\Gc_+$ be obtained from $\Gc$ by applying $\Ab$ or $\Bb$. We know that $\Gc$ is obtained from $\Gc_{sk}$ by replacing each branching node with a regular tree $\Tc_j\,(1\leq j\leq n)$, and replacing each leaf pair by a regular couple $\Qc_j\,(1\leq j\leq m)$. Then:

(1) If one applies step $\Ab$, then this step $\Ab$ must be applied, either at a leaf pair belonging to some regular couple $\Qc_i\,(1\leq i\leq m)$, or at a leaf pair belonging to some regular tree $\Tc_i\,(1\leq i\leq n)$. In this case the other regular trees and regular couples remain the same, and the regular tree $\Tc_i$ or regular couples $\Qc_i$ is replaced by $\Ab\Tc_i$ or $\Ab\Qc_i$.

(2) If one applies step $\Bb$, then this step $\Bb$ must be applied, either at node belonging to some regular couple $\Qc_i\,(1\leq i\leq m)$, or at a node belonging to some regular tree $\Tc_i\,(1\leq i\leq n)$. In this case the other regular trees and regular couples remain the same, and the regular tree $\Tc_i$ or regular couples $\Qc_i$ is replaced by $\Bb\Tc_i$ or $\Bb\Qc_i$.

In either case, notice that a regular tree or a regular couple still remains a regular tree or a regular couple after applying step $\Ab$ or $\Bb$. This proves existence.

Now to prove uniqueness of the representation, note that by Definition \ref{defreg}, the process of forming $\Gc$ from $\Gc_{sk}$ can also be described as follows: (i) first replace each branching node of $\Gc_{sk}$ by a \emph{regular chain}, forming a garden $\Gc_{int}$; (ii) replacing each leaf pair in $\Gc_{int}$ by a regular couple to form $\Gc$. Given $\Gc_{sk}$, clearly $\Gc_{int}$ uniquely determines the regular chains in step (i), and also uniquely determines the regular couples in step (ii) replacing the leaf pairs in $\Gc_{int}$, so it suffices to show that $\Gc$ uniquely determines $\Gc_{int}$. Now we can show, via a case-by-case argument, that $\Gc_{int}$ contains no nontrivial regular sub-couple (i.e. no two subtrees rooted at two nodes in $\Gc_{int}$ form a nontrivial regular couple). Since $\Gc$ is formed from $\Gc_{int}$ by replacing each leaf pair with a regular couple, we see that $\Gc_{int}$ can be reconstructed by collapsing each \emph{maximal regular sub-couple} (under inclusion) in $\Gc$ to a leaf pair (because any regular sub-couple of $\Gc$ must be a sub-couple of one of the regular couples in $\Gc$ replacing a leaf pair in $\Gc_{int}$). Clearly this collapsing process is commutative as explained in the proof of Proposition \ref{defskeleton}, hence the resulting couple $\Gc_{int}$ is unique. This completes the proof.
\end{proof}
\begin{prop}\label{skeletonrecover} Given any $\Gc_{sk}$, the number of gardens $\Gc$ that has scale $m$, width $2R$ and skeleton $\Gc_{sk}$ is at most $C^{m+R}$.
\end{prop}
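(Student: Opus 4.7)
The plan is to apply Proposition \ref{structuregarden} directly. That result expresses every garden $\Gc$ with skeleton $\Gc_{sk}$ \emph{uniquely} as a substitution in which each branching node of $\Gc_{sk}$ is replaced by a regular tree and each leaf pair of $\Gc_{sk}$ is replaced by a regular couple. The count thus reduces to enumerating such assignments subject to the scale constraint $n(\Gc)=m$, and is controlled using the exponential bound $C^n$ from Proposition \ref{regcount} on the number of regular couples and regular trees of a given scale $n$.

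First I count positions in $\Gc_{sk}$: with $m_0:=n(\Gc_{sk})$, the skeleton has $m_0$ branching nodes and (using that the total number of leaves across its $2R$ trees equals $\sum_{j=1}^{2R}(2n_{0,j}+1)=2m_0+2R$) exactly $m_0+R$ leaf pairs, for a total of $N^*:=2m_0+R$ substitution positions. A direct inspection of the substitution procedure (the lone leaf of an inserted regular tree absorbs the three children of the original branching node and itself becomes a new branching node; each inserted regular couple contributes its branching nodes directly) shows that inserting a regular tree of scale $2n_\nf$, or a regular couple of scale $2m_i$, adds exactly $2n_\nf$, respectively $2m_i$, to $n(\Gc)$. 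Hence the scale constraint becomes
\[ S:=\tfrac{m-m_0}{2}=\sum_{\nf}n_\nf+\sum_{i}m_i,\qquad n_\nf,m_i\geq 0. \]

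Two elementary estimates then close the argument: the number of non-negative integer tuples of length $N^*$ summing to $S$ is at most $\binom{S+N^*-1}{N^*-1}\leq 2^{S+N^*}$, and for each such tuple Proposition \ref{regcount} bounds the number of choices of regular trees and couples of the prescribed scales by $\prod_\nf C^{2n_\nf}\prod_i C^{2m_i}=C^{2S}=C^{m-m_0}$. Multiplying and using $m_0\leq m$ yields a total bound of
\[ 2^{S+N^*}\cdot C^{m-m_0}=2^{(m-m_0)/2+2m_0+R}\cdot C^{m-m_0}\leq 2^{2m+R}\cdot C^m\leq (4C)^{m+R}, \]
which is the desired estimate after absorbing the factor $4$ into $C$. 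No serious obstacle arises in this plan; the only step requiring care is the scale-additivity check above, which is immediate from how steps $\Ab$ and $\Bb$ of Definition \ref{defreg} act on branching-node counts.
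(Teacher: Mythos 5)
Your proposal is correct and follows essentially the same route as the paper: both invoke the unique decomposition from Proposition \ref{structuregarden}, count the $O(m+R)$ substitution positions in $\Gc_{sk}$, and combine the exponential bound of Proposition \ref{regcount} with a sum over the possible scale assignments to obtain $C^{m+R}$. Your version merely makes the composition-counting factor $\binom{S+N^*-1}{N^*-1}\leq 2^{S+N^*}$ and the scale-additivity check explicit, which the paper leaves implicit.
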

\begin{proof} This is basically the same as Corollary 4.16 in \cite{DH}. If $\Gc$ has scale $m$ and width $2R$, then $\Gc_{sk}$ has scale at most $m$ and width at most $2R$. Given $\Gc_{sk}$, to construct $\Gc$, using Proposition \ref{structuregarden}, we just need to choose a regular tree at each branching node of $\Gc_{sk}$, and a regular couple at each leaf pair of $\Gc_{sk}$. Note that the number of branching nodes in $\Gc_{sk}$ is at most $m$, and the number of leaf pairs in $\Gc_{sk}$ is at most $m+R$. Thus the number of choices for $\Gc$ is at most
\[\sum_{n_1+\cdots+n_{m'}\leq m}C_0^{n_1}\cdots C_0^{n_{m'}}\leq C^{m+R},\] where $m'=2m+R$, and $C_0$ is an absolute constant as in Proposition \ref{regcount}.
\end{proof}
\subsection{Expressions $\Mc_\Gc$ for gardens $\Gc$}
Given a garden $\Gc=(\Tc_1,\cdots,\Tc_{2R})$ with width $2R$, signature $(\zeta_1,\cdots,\zeta_{2R})$ and scale $m$, and $k_j\in\Zb_L^d$ for each $1\leq j\leq 2R$, and time $t\in[0,1]$, define
\begin{equation}\label{defkg}\Mc_\Gc(t,k_1,\cdots,k_{2R})=\bigg(\frac{\delta}{2L^{d-1}}\bigg)^m\zeta^*(\Gc)\sum_\Is\epsilon_\Is\int_{\Ic}\prod_{\nf\in\Nc^*}e^{\pi i\zeta_\nf\cdot\delta L^2\Omega_\nf t_\nf}\,\mathrm{d}t_\nf\cdot\prod_{\lf\in\Lc^*}^{(+)}n_{\mathrm{in}}(k_\lf).
\end{equation} Here in (\ref{defkg}), $\zeta^*(\Gc)=\prod_{\nf\in\Nc^*}(i\zeta_\nf)$ and $\epsilon_\Is=\prod_{j=1}^{2R}\epsilon_{\Ds_j}$ where $\Ds_j$ is the restriction of $\Is$ to $\Tc_j$ (which is a $k_j$-decoration of $\Tc_j$), the sum is taken over all $(k_1,\cdots,k_{2R})$-decorations $\Is$, the last product is taken over all $\lf\in\Lc^*$ with sign $+$, and $\Ic$ is the domain
\begin{equation}\label{timegarden}\Ic=\left\{t[\Nc^*]:0<t_{\nf'}<t_\nf<t,\mathrm{\ whenever\ \nf'\ is\ a\ child\ of\ \nf}\right\}.
\end{equation}

By using Isserlis' theorem (Lemma A.2 in \cite{DH}) and repeating the arguments in Section 2.2.3 of \cite{DH}, we can obtain, for any tree $\Tc_j\,(1\leq j\leq 2R)$ with sign $\zeta_j$, that
\begin{equation}\label{corrgarden}\Eb\bigg(\prod_{j=1}^{2R}(\Jc_{\Tc_j})_{k_j}^{\zeta_j}(t)\bigg)=\sum_{\Ps}\Mc_\Gc(t,k_1,\cdots,k_{2R}).
\end{equation} Here the sum is taken over all possible pairings $\Ps$ that make $(\Tc_1,\cdots,\Tc_{2R})$ a garden, and $\Gc$ is the resulting garden.

We can reduce (\ref{unibound}) to the following two propositions. Here Proposition \ref{mainprop} is the key component, and Proposition \ref{mainprop2} follows from similar arguments. Note also that Proposition \ref{mainprop2} is actually an improvement of Propositions 12.1--12.2 of \cite{DH}, where the decay of exceptional probability is improved from $L^{-A}$ to $e^{-(\log L)^3}$.
\begin{prop}\label{mainprop} Fix $R$ and $(\zeta_1,\cdots,\zeta_{2R})$ and $(k_1,\cdots,k_{2R})$, and $(m_1,\cdots,m_{2R})$. Assume $R\leq 2\log L$, and $m_j\leq N\,(1\leq j\leq 2R)$, and set $m:=m_1+\cdots+m_{2R}$. Consider the sum
\begin{equation}\label{mixedsum}\Ss=\sum_\Gc\Mc_\Gc(t,k_1,\cdots,k_{2R}),
\end{equation} where the sum is taken over all \emph{mixed} gardens $\Gc=(\Tc_1,\cdots,\Tc_{2R})$ of width $2R$ and signature $(\zeta_1,\cdots,\zeta_{2R})$ such that the scale of $\Tc_j$ is $m_j$ for $1\leq j\leq 2R$, then we have
\begin{equation}\label{mixedest}|\Ss|\lesssim (C^+\delta^{1/4})^m L^{-\nu R}
\end{equation} uniformly in $t$ and in $(k_1,\cdots,k_{2R})$.
\end{prop}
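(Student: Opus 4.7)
The plan is to run the same machinery that was developed for couples in \cite{DH}, but applied to gardens, exploiting the extra algebraic structure that becomes available when the garden is \emph{mixed} (so that its skeleton is not the trivial garden).

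\textbf{Step 1: decomposition by skeleton.} By Proposition \ref{structuregarden}, every garden $\Gc$ is built from a unique skeleton $\Gc_{sk}$ by replacing each leaf pair with a regular couple $\Qc_i$ and each branching node with a regular tree $\Tc_i^{\,\sharp}$. Since $\Gc$ is mixed, Proposition \ref{defskeleton} forces $\Gc_{sk}$ to be a \emph{nontrivial} prime garden (equivalently, not a regular multi-couple). Group the sum $\Ss$ by skeleton, $\Ss = \sum_{\Gc_{sk}} \Ss_{\Gc_{sk}}$, reducing matters to bounding each $\Ss_{\Gc_{sk}}$.

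\textbf{Step 2: replace regular pieces by their asymptotics.} Expanding $\Mc_\Gc$ using (\ref{defkg}) and grouping the time integrations and decorations local to each regular couple or regular tree, the inner contributions are exactly the kernels $\Kc_{\Qc_i}$ and $\Kc_{\Tc_i^{\sharp}}^{*}$ studied in Proposition \ref{propreg}. Replace each of them by its approximation $(\Kc_{\Qc_i})_{\mathrm{app}}$ resp.\ $(\Kc_{\Tc_i^{\sharp}}^{*})_{\mathrm{app}}$, which factorize as sums of products $\delta^{n_i}\Jc\Ac(t,s)\,\Mc(k)$ with the uniform bounds (\ref{regbound1}) and (\ref{regbound3}). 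The errors (\ref{regbound2})--(\ref{regbound4}) contribute acceptable $L^{-2\nu}$ savings at each replacement, which after summation over the at most $C^{m+R}$ choices of regular couples/trees (Propositions \ref{regcount} and \ref{skeletonrecover}) remain absorbable into $(C^+\delta^{1/4})^m L^{-\nu R}$ provided the skeleton estimate already carries an $L^{-\nu R}$ gain.

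\textbf{Step 3: reduction to a sum over decorations of the skeleton.} After Step 2, the remaining object is, up to an acceptable error, a sum over $(k_1,\ldots,k_{2R})$-decorations of $\Gc_{sk}$ only, with an oscillatory factor $\prod_{\nf\in\Nc^*(\Gc_{sk})} e^{\pi i \zeta_\nf \delta L^2 \Omega_\nf t_\nf}$, a time integral over the $\Ic$-region restricted to $\Gc_{sk}$, and scalar weights at the leaves of $\Gc_{sk}$ coming from the spatial profiles $\Mc$ of the regular pieces (which are Schwartz in each variable by (\ref{regbound1})). This is precisely the type of expression handled by the counting/molecule analysis of \cite{DH}, except the underlying combinatorial object is a mixed prime garden rather than a prime couple.

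\textbf{Step 4: molecule counting with the mixed-garden gain.} Build the molecule of $\Gc_{sk}$ as in \cite{DH}. The crucial new input is Proposition \ref{basemol}: for a mixed garden one has $\chi = m'$ (with $m' = n(\Gc_{sk})$), whereas multi-couples only give $\chi \le m' - R/2$. Feeding $\chi = m'$ into Proposition \ref{gain} (whose counting exponent is governed by $m' - R/2$) produces an additional $L^{-\nu R}$ relative to what multi-couples would yield, which combined with the $(C^+\delta^{1/4})^{m'}$ bound on the base molecule yields
\[
|\Ss_{\Gc_{sk}}| \;\lesssim\; (C^+\delta^{1/4})^{m'}\, L^{-\nu R}\cdot (C^+)^{m-m'},
\]
where the last factor collects the bounds (\ref{regbound1}),(\ref{regbound3}) from the regular pieces of total scale $m-m'$. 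Summing over skeletons (at most $C^{m+R}$ of them by Proposition \ref{skeletonrecover}, noting $R \le 2\log L \le m$ when $\Gc_{sk}$ is nontrivial) and over the choices of regular couples/trees at each slot (using Proposition \ref{regcount} and Lemma \ref{auxlem} to absorb factorial growth) yields (\ref{mixedest}).

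\textbf{Main obstacle.} Everything except Step 4 is essentially bookkeeping built on the infrastructure of \cite{DH}. The core difficulty is Proposition \ref{basemol}: one must show that the molecule associated to a mixed prime garden has $\chi$ strictly larger than for the multi-couple skeleton, with the precise quantitative jump $R/2$ needed to produce the $L^{-\nu R}$ factor. Heuristically, the failure of $\Gc_{sk}$ to split into pairwise fully paired trees forces cross-pairings between distinct $\Tc_j$'s, each of which creates an extra independent cycle in the molecule; making this rigorous for arbitrary prime gardens, uniformly in $R \le 2\log L$, and tracking constants $C^+$ versus $C$ carefully (the former not allowed to compound with $R$), is the main technical work.
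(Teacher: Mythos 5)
Your Steps 1--3 and the identification of Proposition \ref{basemol} as the source of the $L^{-\nu R}$ gain match the paper's strategy, but the proposal omits two components without which the argument fails, and contains a combinatorial miscount. First, there is no treatment of \emph{irregular chains} and the cancellation between \emph{congruent} gardens (Section \ref{irre} of the paper, Definition \ref{conggen}). This is not optional bookkeeping: irregular chains in the skeleton produce type I chains in the molecule, and the counting bound (\ref{defect2}) carries a loss $\delta^{-(m_0'+m_1)/2}$ where $m_1$ is the number of atoms in type I chains. If the skeleton contains an irregular chain of length comparable to $m_0'$, this loss overwhelms the available positive powers of $\delta$ and the claimed bound $(C^+\delta^{1/4})^m$ is false term by term. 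The paper first sums over each congruence class $\Fs$ of $2^Q$ gardens, exploits the explicit cancellation (\ref{summary1}) to collapse each long irregular chain, and only then reduces to a garden $\Gc_{sk}^{\#}$ for which $m_1\leq C(r+R)$, so that $\delta^{-m_1/2}\leq L^{\nu(r+R)/2}$ is absorbable. Relatedly, you never explain how the oscillatory time integral is estimated; the localization $|\Omega_\nf-b_\nf|\leq\delta^{-1}L^{-2}$ needed to invoke Proposition \ref{gain} requires the uniform $L^1$-in-$\alpha$ bounds on $\Vc_{\Gc}$ (Proposition \ref{section8main}), which themselves rely on the type II chain pairing $\alpha_\nf+\alpha_{\nf'}=\mu_\nf$ furnished by the extra conditions $\mathtt{Ext}$ from the counting algorithm.

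Second, your final summation over skeletons is wrong as stated: Proposition \ref{skeletonrecover} bounds the number of gardens with a \emph{given} skeleton, not the number of skeletons, and the number of prime gardens of scale $m_0'$ is factorial in $m_0'$ (the leaf pairing alone contributes on the order of $(m_0'+R)!$ choices). One cannot sum a uniform bound $(C^+\delta^{1/4})^{m'}L^{-\nu R}$ over that many terms. The paper's resolution is to attach to each term an index $r=r_0+r_1$ (remainder-marked nodes plus good steps in the counting algorithm), prove the sharper per-term bound $(C^+\delta^{1/4})^{m}L^{-\nu(r+R)}$ of (\ref{finalbound}), and show via Proposition \ref{reconstruction} and the type II chain structure that the number of terms of index $r$ is at most $(C(r+R))!\,C^m$; the factorial is then beaten by $L^{-\nu r}$ since $r+R\ll L^{\nu}$. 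Finally, a smaller point: you have reversed Proposition \ref{basemol} --- it is multi-couples that satisfy $\chi=m$ and \emph{mixed} gardens that satisfy $\chi\leq m-R/2$ (because each connected component of the molecule must contain at least four trees, forcing $F\leq R/2$); the gain comes from $\chi$ being \emph{smaller} than the $m_0'$ appearing in the prefactor $L^{-(d-1)m_0'}$, not larger.
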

\begin{prop}\label{mainprop2} With probability $\geq 1-e^{-(\log L)^3}$, we have
\begin{equation}\label{largedevest}|(\Jc_n)_k(t)|\lesssim\langle k\rangle^{-9d}(C^+\sqrt{\delta})^{n/2}e^{(\log L)^3},\quad |\Rc_k(t)|\lesssim \langle k\rangle^{-9d}(C^+\sqrt{\delta})^{N/2}e^{(\log L)^3}
\end{equation} for all $0\leq n\leq N^3$, as well as
\begin{equation}\label{operatordev}\|\Ls^n\|_{Z\to Z}\lesssim (C^+\sqrt{\delta})^{n/2}e^{(\log L)^3},
\end{equation} for all $0\leq n\leq N$, uniformly in any $k\in\Zb_L^d$ and $t\in[0,1]$. Here $\Rc$ and $\Ls$ are defined in (\ref{eqnbk1.5}), and the $Z$ norm is defined in (\ref{defznorm}).
\end{prop}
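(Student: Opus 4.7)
The plan is to follow the moment method used for Propositions 12.1--12.2 of \cite{DH} but push the moment order higher, so that Markov's inequality delivers the stretched-exponential probability $e^{-(\log L)^3}$ in place of the polynomial $L^{-A}$. The one genuinely new input is the garden formalism of Section \ref{sec:gardens}, in particular Proposition \ref{mainprop}, whose restriction $R\leq 2\log L$ will precisely dictate our moment order.

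For the first bound, fix $n,k,t$ and set $p=\lfloor 2\log L\rfloor$, then consider $\Eb|(\Jc_n)_k(t)|^{2p}$ by taking $2p$ copies of $(\Jc_n)_k(t)$ with alternating signs. By Isserlis' theorem and (\ref{corrgarden}),
\[\Eb\bigl|\langle k\rangle^{9d}(\Jc_n)_k(t)\bigr|^{2p}=\langle k\rangle^{18dp}\sum_{\Gc}\Mc_\Gc(t,k,\ldots,k),\]
where $\Gc$ ranges over all gardens of width $2p$ whose trees all have scale $n$. We split the sum into multi-couples and mixed gardens. The multi-couple part factorizes into a product over the $p$ constituent couples and equals $(2p-1)!!\,(\Eb|(\Jc_n)_k|^2)^p$, which by Proposition \ref{propreg} and the second-moment analysis of \cite{DH} is bounded by $p!\,(C^+\delta)^{np}\langle k\rangle^{-40dp}$; the Schwartz weight is absorbed by the $\langle k\rangle^{-40d}$ decay of the $\Mc$-factors in the regular-couple approximations. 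Mixed gardens are controlled by Proposition \ref{mainprop} and contribute $\lesssim (C^+\delta^{1/4})^{2np}L^{-\nu p}\langle k\rangle^{-40dp}$, which is lower order. Hence $\Eb|\langle k\rangle^{9d}(\Jc_n)_k(t)|^{2p}\lesssim p!(C^+\delta)^{np}$.

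Markov's inequality with threshold $X=(C^+\sqrt\delta)^{n/2}e^{(\log L)^3}$ then yields
\[\Pb\bigl(\langle k\rangle^{9d}|(\Jc_n)_k(t)|>X\bigr)\lesssim \frac{p!\,\delta^{np/2}}{e^{2p(\log L)^3}}\leq e^{-(\log L)^{7/2}},\]
using $p!\leq e^{p\log p}\leq e^{O(\log L\,\log\log L)}$. A union bound over the $\leq L^{O(1)}$ lattice points $k$, the $\leq (\log L)^{O(1)}$ values of $n\leq N^3$, and a time mesh in $[0,1]$ of spacing $L^{-100}$ preserves the exceptional probability $e^{-(\log L)^3}$; continuous $t$-dependence is recovered from Sobolev-type estimates applied to $\partial_t(\Jc_n)_k$, which admits the same moment bound up to a polynomial-in-$L$ loss. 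The bound on $|\Rc_k|$ is essentially identical: $\Rc$ in (\ref{eqnbk1.5}) is a trilinear integral over tuples of Picard iterates whose total scale is at least $N$, so the associated garden-type structures have total scale $\geq N$ and the moment bound gains a factor $(C^+\delta)^{Np}$, which converts to the threshold $(C^+\sqrt\delta)^{N/2}$.

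For $\|\Ls^n\|_{Z\to Z}$, one unfolds $\Ls^n$ into a multilinear expression in $\Jc$-iterates with one free slot, realising $\Ls^n$ as an operator with a random kernel $K_n(k,k';t,s)$ indexed by paired trees with a distinguished lone leaf. By a standard reduction, $\|\Ls^n\|_{Z\to Z}$ is dominated by a weighted Hilbert--Schmidt-type norm of $K_n$, whose $2p$-th moment admits again a garden-type expansion: regular structures yield a leading $p!(C^+\delta)^{np}$, while the non-leading part is controlled by Proposition \ref{mainprop}. The main obstacle is the tight combinatorial window: the constraint $R\leq 2\log L$ in Proposition \ref{mainprop} caps the moment order at $2\log L$, while the exponent $2p(\log L)^3$ that ultimately appears in Markov must beat both the factorial $p!\sim e^{O(p\log p)}$ and the combinatorial price of the union bound. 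The needed margin is supplied by the slack choice of threshold $(C^+\sqrt\delta)^{n/2}$ rather than the essentially sharp $(C^+\delta)^{n/2}$, which gives the crucial factor $\delta^{np/2}$ in the Markov quotient and seals the estimate.
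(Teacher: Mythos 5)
Your proposal is workable in outline, but it takes a genuinely different and substantially heavier route than the paper. The paper's proof exploits the fact that the thresholds in \eqref{largedevest}--\eqref{operatordev} already carry the huge factor $e^{(\log L)^3}$: it therefore needs only \emph{second} moments, namely $\Eb\big|\sup_{k,t}\langle k\rangle^{9d}(\Jc_n)_k(t)\big|^2\leq (C^+\sqrt\delta)^nL^{100d}$ (the sup being absorbed inside the expectation via Gagliardo--Nirenberg, using a matching bound on $\Eb|\partial_t(\Jc_n)_k(t)|^2$), after which Chebyshev gives an exceptional probability $\leq L^{100d}e^{-2(\log L)^3}\leq e^{-(\log L)^3}$; the paper explicitly remarks that with $N=\lfloor(\log L)^4\rfloor$ no hypercontractivity or higher-moment estimates are needed for \eqref{largedevest}. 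You instead run the $2p$-th moment method with $p=\lfloor 2\log L\rfloor$ through the garden formalism and Proposition \ref{mainprop} --- in effect rebuilding the hypercontractivity-type input of \cite{DH} from scratch. This is not wrong (there is no circularity, since Proposition \ref{mainprop} is a deterministic statement about the sums $\Mc_\Gc$), and it would yield a stronger tail than needed, but it buys nothing here and introduces avoidable bookkeeping. Two points in your write-up need repair if you pursue it: (i) ``multi-couples'' and ``mixed gardens'' are not a dichotomy --- a general garden is a union of some couples and a mixed sub-garden of smaller width, so you must use the decomposition of the paper's proof of Theorem \ref{main} (around \eqref{contribution4}) rather than a two-way split, and Proposition \ref{mainprop} applies only to the fully mixed piece; (ii) for \eqref{operatordev} the paper does \emph{not} avoid higher moments --- it follows the kernel estimates of Proposition 12.2 of \cite{DH}, which do use hypercontractivity, and a crude Hilbert--Schmidt reduction as you suggest loses powers of $L^{d}$ from the lattice density, which is tolerable only because of the $e^{(\log L)^3}$ slack; this should be said explicitly rather than left as ``a standard reduction.''
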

Before proceeding, we first illustrate how Propositions \ref{mainprop}--\ref{mainprop2} imply Theorem \ref{main}.
\begin{proof}[Proof of Theorem \ref{main}] We only need to prove (\ref{unibound}). Let $E_1$ be the event that (\ref{nls}) has a smooth solution on $[0,T]$, and $E\subset E_1$ be the event that Proposition \ref{mainprop2} holds, then $\Pb(E_1)\geq \Pb(E)\geq 1-e^{-(\log L)^3}$. 

Note that, under the assumption $E$, we can bound the remainder $b$ defined in (\ref{expand}) by $\|b\|_Z\leq e^{-(\log L)^4}$. This can be proved similarly as in Proposition 12.3 of \cite{DH}. In fact, the equation (\ref{eqnbk}) satisfied by $b$ can be written as
\begin{equation}\label{eqnbk2} b=(1-\Ls^N)^{-1}(1+\Ls+\cdots+\Ls^{N-1})(\Rc+\Bs(b,b)+\Cs(b,b,b)).
\end{equation} We view this as the fixed point equation for a contraction mapping from the set $\{b\in Z:\|b\|_Z\leq e^{-(\log L)^4}\}$ to itself, hence the solution $b$ is unique and satisfies the desired bound. The contraction mapping property follows from the estimates (using also the definition of $\Bs$ and $\Cs$, see (\ref{eqnbk1.5}))
\begin{align}
\label{estimate1}&\|\Rc\|_Z\leq e^{-2(\log L)^4},\\
\label{estimate2}&\|\Jc_n\|_Z\leq e^{2(\log L)^3}\,\,(\forall\, 0\leq n\leq N^3),\\
\label{estimate3}&\|\Ic\Cc_+(f,g,h)\|_Z\leq L^{10d}\|f\|_Z\|g\|_Z\|h\|_Z,\\
\label{estimate4}&\|\Ls^n\|_{Z\to Z}\leq e^{2(\log L)^3}\,\,(\forall\,0\leq n\leq N),\\
\label{estimate5}&\|(1-\Ls^N)^{-1}\|_{Z\to Z}\leq 2.
\end{align} Here (\ref{estimate1})--(\ref{estimate2}) follow from (\ref{largedevest}) and our choice $N=\lfloor (\log L)^4\rfloor$, (\ref{estimate3}) is elementary, and (\ref{estimate4})--(\ref{estimate5}) follow from (\ref{operatordev}), our choice $N=\lfloor (\log L)^4\rfloor$ and Neumann series expansions.

Now, to prove (\ref{unibound}) we need to calculate
\begin{equation}\label{expect0}\Eb\bigg(\mathbf{1}_{E_1}\cdot\prod_{j=1}^{2R}a_{k_j}^{\zeta_j}(t)\bigg).\end{equation} As assumed we have $R\leq \log L$. Using mass conservation we can bound $|a_{k_j}(t)|\lesssim L^d$ for each $k_j$, so if $E_1$ is replaced by $E_1\backslash E$ in (\ref{expect0}), the corresponding contribution is bounded by
\[\bigg|\Eb\bigg(\mathbf{1}_{E_1\backslash E}\cdot\prod_{j=1}^{2R}a_{k_j}^{\zeta_j}(t)\bigg)\bigg|\leq \Pb(E^c)L^{2dR}\leq e^{-(\log L)^3}L^{2d\log L}\leq L^{-10},\] so we may replace $E_1$ by $E$ in (\ref{expect0}). Under the assumption $E$, we may expand $a_{k_j}(t)$ using (\ref{expand}), which leads to different combinations of terms.

Consider the terms where all factors are of form $\Jc_n$. For such factors we will also replace $\mathbf{1}_E$ by $1$, and deal with the resulting error term later. As such, we get a contribution
\begin{equation}\label{contribution1}\sum_{1\leq m_1,\cdots,m_{2R}\leq N}\Eb\bigg(\prod_{j=1}^{2R}(\Jc_{m_j})_{k_j}^{\zeta_j}(t)\bigg).
\end{equation} For fixed $(m_1,\cdots,m_{2R})$, using the second expansion in (\ref{expand}) and (\ref{corrgarden}), we can write
\begin{equation}\label{contribution2}\Eb\bigg(\prod_{j=1}^{2R}((\Jc_{m_j})_{k_j}(t))^{\zeta_j}\bigg)=\sum_\Gc\Mc_\Gc(t,k_1,\cdots,k_{2R}),
\end{equation} where the sum is taken over \emph{all gardens} $\Gc=(\Tc_1,\cdots,\Tc_{2R})$ of width $2R$ and signature $(\zeta_1,\cdots,\zeta_{2R})$ such that the scale of $\Tc_j$ is $m_j$ for $1\leq j\leq 2R$. Note that by definition, each $\Gc$ is uniquely expressed as the union of some couples and a mixed garden; suppose the number of couples is $R_1\leq R$, and $R_2:=R-R_1$. If $R_2=0$, then there is a unique partition $\Pc$ of $\{1,\cdots,2R\}$ into two-element subsets $\{j,j'\}$ such that $\zeta_{j'}=-\zeta_j$ and $\{\Tc_j,\Tc_{j'}\}$ is a couple for each pair $\{j,j'\}$, moreover for $\Mc_\Gc(t,k_1,\cdots,k_{2R})$ to be nonzero one must have $k_j=k_{j'}$. For $\Pc$ fixed, the contribution of this part of sum equals
\begin{equation}\label{contribution3}\prod_{\{j,j'\}\in\Pc}\mathbf{1}_{k_j=k_{j'}}\sum_{\Qc}\Mc_\Qc(t,k_j,k_j)=\prod_{\{j,j'\}\in\Pc}\mathbf{1}_{k_j=k_{j'}}\cdot\Eb\big((\Jc_{m_j})_{k_j}^{\zeta_j}(t)(\Jc_{m_{j'}})_{k_j}^{\zeta_{j'}}(t)\big),\end{equation}where for fixed $\{j,j'\}\in\Pc$, the sum is taken over all couples $\Qc=\{\Tc_j,\Tc_{j'}\}$ such that the two trees have signs $\zeta_j$ and $\zeta_{j'}$ and scales $m_j$ and $m_{j'}$ respectively, and the equality in (\ref{contribution3}) follows from (\ref{corrgarden}). Now, upon summing over all choices for $(m_1,\cdots,m_{2R})$ and using (\ref{twopoint}), we obtain that this contribution equals
\[\sum_\Pc\prod_{\{j,j'\}\in\Pc}\mathbf{1}_{k_j=k_{j'}}\prod_j^{(+)}\big(n(\delta t,k_j)+O(L^{-\nu})\big)=\sum_\Pc\prod_{\{j,j'\}\in\Pc}\mathbf{1}_{k_j=k_{j'}}\prod_j^{(+)}n(\delta t,k_j)+O(R!M_{\mathrm{kin}}^RL^{-\nu}),\] where in the last inequality we have used that $1+|n(\delta t,k_j)|\leq M_{\mathrm{kin}}$ for each $j$.

Next, consider the contribution where $R_2>0$. Up to a factor $\binom{2R}{2R_2}R_1!\leq (2R)^{2R_2}R!$ and a permutation, we may assume $\{\Tc_{2j-1},\Tc_{2j}\}$ is a couple for $R_2+1\leq j\leq R$ and $\Gc_2:=(\Tc_1,\cdots,\Tc_{2R_2})$ is a mixed garden. Again we must have $k_{2j-1}=k_{2j}$ for $R_2+1\leq j\leq R$; if we fix $(m_1,\cdots,m_{2R_2})$ and sum over the other $m_j$, then in the same way as above, we can bound the corresponding contribution by
\begin{equation}\label{contribution4}(2R)^{2R_2}R!\prod_{j=R_2+1}^R\big(n(\delta t,k_{2j-1})+O(L^{-\nu})\big)
\cdot\sum_{\Gc_2}\Kc_{\Gc_2}(t,k_1,\cdots,k_{2R_2}),\end{equation} where the sum is taken over all mixed gardens $\Gc_2=(\Tc_1,\cdots,\Tc_{2R_2})$ of width $2R_2$ and signature $(\zeta_1,\cdots,\zeta_{2R_2})$ such that the scale of $\Tc_j$ is $m_j$. By Proposition \ref{mainprop} we have that
\[(\ref{contribution4})\leq (2R)^{2R_2}R!\cdot M_{\mathrm{kin}}^R\cdot \delta^{(m_1+\cdots +m_{2R_2})/8}L^{-\nu R_2}.\] Upon summing over $(m_1,\cdots,m_{2R_2})$ and using that $R\leq \log L$, we can bound this contribution by the right hand side of (\ref{unibound}).

Finally, we show that all the remainder terms are bounded by the right hand side of (\ref{unibound}). In fact, the above arguments imply that
\[\bigg|\Eb\bigg(\prod_{j=1}^{2R}(\widetilde{\Jc})_{k_j}^{\zeta_j}(t)\bigg)\bigg|\lesssim R! \cdot M_{\mathrm{kin}}^R,\quad\mathrm{where\ }\widetilde{\Jc}=\sum_{n=0}^N\Jc_n;\] in particular we have
\begin{equation}\label{contributionfin}\Eb\bigg(\prod_{j=1}^{2R}|(\widetilde{\Jc})_{k_j}(t)|^2\bigg)\lesssim (2R)! \cdot M_{\mathrm{kin}}^{2R}.\end{equation} Since $R\leq \log L$, this allows to control the terms where all factors are of form $\Jc_n$, but with $\mathbf{1}_E$ replaced by $1-\mathbf{1}_E$ (where we simply apply Cauchy-Schwartz and use the fact $\Pb(E^c)\leq e^{-(\log L)^3}$); similarly, if at least one factor in the expansion is the remainder $b$, then we can also apply Cauchy-Schwartz and use the bound $|b_k(t)|\leq e^{-(\log L)^4}$ together with (\ref{contributionfin}) to control this term. This completes the proof.
\end{proof}
From now on we will focus on the proof  of Propositions \ref{mainprop}--\ref{mainprop2}.
\section{Irregular chains}\label{irre}
\subsection{Reduction to prime gardens} Let $\Gc_{sk}$ be the skeleton of a garden $\Gc$, which is then a prime garden. By Proposition \ref{structuregarden}, $\Gc$ can be obtained from $\Gc_{sk}$ by replacing each branching node $\mf$ with a regular tree $\Tc^{(\mf)}$, and replacing each leaf pair $\{\mf,\mf'\}$ in $\Gc_{sk}$ with a regular couple $\Qc^{(\mf,\mf')}$. Similar to Section 8.1 of \cite{DH}, using Proposition \ref{propreg}, we can reduce $\Mc_\Gc(t,k_1,\cdots,k_{2R})$ to an expression that has similar form with $\Mc_{\Gc_{sk}}(t,k_1,\cdots,k_{2R})$. For the sake of completeness we briefly recall the reduction process below.

Recall that
\begin{equation}\label{defkg2}\Mc_\Gc(t,k_1,\cdots,k_{2R})=\bigg(\frac{\delta}{2L^{d-1}}\bigg)^m\zeta^*(\Gc)\sum_\Is\epsilon_\Is\int_{\Ic}\prod_{\nf\in\Nc^*}e^{\pi i\zeta_\nf\cdot\delta L^2\Omega_\nf t_\nf}\,\mathrm{d}t_\nf\cdot\prod_{\lf\in\Lc^*}^{(+)}n_{\mathrm{in}}(k_\lf),
\end{equation} where $m$ is the scale of $\Gc$, $\Ic$ is the domain defined in (\ref{timegarden}), $\Is$ is a $(k_1,\cdots,k_{2R})$-decoration and other objects are defined as before, all associated to the garden $\Gc$. By definition, the restriction of $\Is$ to nodes in $\Gc_{sk}$ forms a $(k_1,\cdots,k_{2R})$-decoration of $\Gc_{sk}$, and the relevant quantities such as $\Omega_\nf$ are the same for both decorations (i.e. each $\Omega_\nf$ in the decoration of $\Gc_{sk}$ uniquely corresponds to some $\Omega_\nf$ in the corresponding decoration of $\Gc$).

Now, let $\{\mf,\mf'\}$ be a leaf pair in $\Gc_{sk}$, which becomes the roots of the regular sub-couple $\Qc^{(\mf,\mf')}$ in $\Gc$. We must have $k_\mf=k_{\mf'}$. In (\ref{defkg2}), consider the summation in the variables $k_\nf$, where $\nf$ runs over all nodes in $\Qc^{(\mf,\mf')}$ other than $\mf$ and $\mf'$ (these variables, together with $k_\mf$ and $k_{\mf'}$, form a $k_\mf$-decoration of $\Qc^{(\mf,\mf')}$), and the integration in the variables $t_\nf$, where $\nf$ runs over all branching nodes in $\Qc^{(\mf,\mf')}$, with all the other variables fixed. By definition, this summation and integration equals, up to some sign $\zeta^*(\Qc^{(\mf,\mf')})$ and some power of $\delta(2L^{d-1})^{-1}$, the \emph{exact} expression $\Kc_{\Qc^{(\mf,\mf')}}(t_{\mf^p},t_{(\mf')^p},k_\mf)$. Here we assume $\zeta_\mf=+$ and $\zeta_{\mf'}=-$, and $\mf^p$ is the parent of $\mf$ (if $\mf$ is the root of some tree then $t_{\mf^p}$ should be replaced by $t$; similarly for $(\mf')^p$). The relevant notations here and below are defined as in Proposition \ref{propreg}.

Similarly, let $\mf$ be a branching node in $\Gc_{sk}$, which becomes the root $\pf$ and lone leaf $\qf$ of a regular tree $\Tc^{(\mf)}$ in $\Qc$. We must have $k_\pf=k_\qf$. In (\ref{defkg2}), consider the summation in the variables $k_\nf$, where $\nf$ runs over all nodes in $\Tc^{(\mf)}$ other than $\pf$ and $\qf$ (these variables, together with $k_\pf$ and $k_{\qf}$, form a $k_\mf$-decoration of $\Tc^{(\mf)}$), and the integration in the variables $t_\nf$, where $\nf$ runs over all branching nodes in $\Tc^{(\mf)}$, with all the other variables fixed. In the same way, this summation and integration equals, up to some sign $\widetilde{\zeta}(\Tc^{(\mf)})$ and some power of $\delta(2L^{d-1})^{-1}$, the \emph{exact} expression $\Kc_{\Tc^{(\mf)}}^*(t_{\pf^p},t_{\qf},k_\pf)$. Here $\pf^p$ is the parent of $\pf$ (again, if $\pf$ is the root of some tree then $t_{\pf^p}$ should be replaced by $t$).

After performing this reduction for each leaf pair and branching node of $\Gc_{sk}$, we can reduce the summation in (\ref{defkg2}) to the summation in $k_\mf$ for all leaves and branching nodes $\mf$ of $\Gc_{sk}$, i.e. a $(k_1,\cdots,k_{2R})$-decoration of $\Gc_{sk}$. Moreover, we can reduce  the integration in (\ref{defkg2}) to the integration in $t_\mf$ for all branching nodes $\mf$ of $\Gc_{sk}$ (for a regular tree, the time variables $t_{\pf^p}$ and $t_{\qf}$ for $\Gc$ correspond to $t_{\mf^p}$ and $t_\mf$ for $\Gc_{sk}$ where $\mf^p$ is the parent of $\mf$). This implies that
\begin{multline}\label{bigformula2}\Mc_\Gc(t,k_1,\cdots,k_{2R})=\bigg(\frac{\delta}{2L^{d-1}}\bigg)^{m_0}\zeta^*(\Gc_{sk})\sum_{\Is_{sk}}\int_{\Ic_{sk}}\epsilon_{\Is_{sk}}\prod_{\nf\in \Nc_{sk}^*} e^{\zeta_\nf\pi i\cdot\delta L^2\Omega_\nf t_\nf}\,\mathrm{d}t_\nf\\\times{\prod_{\mf\in\Lc_{sk}^*}^{(+)}\Kc_{\Qc^{(\mf,\mf')}}(t_{\mf^p},t_{(\mf')^p},k_\mf)}\prod_{\mf\in\Nc_{sk}^*}\Kc_{\Tc^{(\mf)}}^*(t_{\mf^p},t_{\mf},k_\mf),
\end{multline} where $m_0$ is the scale of $\Gc_{sk}$, $\Ic_{sk}$ is the domain defined in (\ref{timegarden}), $\Is_{sk}$ is a $(k_1,\cdots,k_{2R})$-decoration of $\Gc_{sk}$, the other objects are as before but associated to the garden $\Gc_{sk}$. Moreover in (\ref{bigformula2}), { the first product is taken over all leaves $\mf$ of sign $+$ with $\mf'$ being the leaf paired to $\mf$}, the second product is taken over all branching nodes $\mf$, and $\mf^p$ is the parent of $\mf$.

Using Proposition \ref{propreg}, in (\ref{bigformula2}) we can decompose
\begin{equation}\label{inputdecomp}\Kc_{\Qc^{(\mf,\mf')}}=(\Kc_{\Qc^{(\mf,\mf')}})_{\mathrm{app}}+\Rs_{\Qc^{(\mf,\mf')}},\quad \Kc_{\Tc^{(\mf)}}^*=(\Kc_{\Tc^{(\mf)}}^*)_{\mathrm{app}}+\Rs_{\Tc^{(\mf)}}^*.
\end{equation} Here $(\Kc_{\Qc^{(\mf,\mf')}})_{\mathrm{app}}$ and $(\Kc_{\Tc^{(\mf)}}^*)_{\mathrm{app}}$ are the leading terms in Proposition \ref{propreg}, and each of them is a linear combination of functions of $(t,s)$ multiplied by functions of $k$, which in turn satisfy (\ref{regbound1}) and (\ref{regbound3}); the remainders $\Rs_{\Qc^{(\mf,\mf')}}$ and $\Rs_{\Tc^{(\mf)}}^*$ satisfy (\ref{regbound2}) and (\ref{regbound4}).

We may fix a \emph{mark} in $\{\Lf,\Rf\}$ for each leaf pair and each branching node in $\Gc_{sk}$ which indicates whether we select the \emph{leading} term $(\cdots)_{\mathrm{app}}$ or the \emph{remainder} term $\Rs$ or $\Rs^*$; for a general garden $\Gc$ we can do the same but only for the nodes of its skeleton $\Gc_{sk}$. In this way we can define \emph{marked} gardens, which we still denote by $\Gc$, and expressions of form (\ref{bigformula2}) but with $\Kc_{\Qc^{(\mf,\mf')}}$ and $\Kc_{\Tc^{(\mf)}}^*$ replaced by the corresponding leading or remainder terms, which we still denote by $\Mc_{\Gc}$. By definition, any sum of $\Mc_\Gc$ over unmarked gardens $\Gc$ equals the corresponding sum over marked gardens $\Gc$ for all possible unmarked gardens and all possible markings.

\smallskip
In the next Section we will define the notion of \emph{irregular chains} to exhibit the cancellation between $\Mc_\Gc$ for some different gardens $\Gc$ with specific symmetries.
\subsection{Irregular chains and congruence} The notion of irregular chains for gardens is defined in the same way as for couples, see Section 8.2 of \cite{DH}.
\begin{df}[Definition 8.1 of \cite{DH}]\label{irrechain} Given a garden $\Gc$ (or a paired tree $\Tc$), we define an \emph{irregular chain} to be a sequence of nodes $(\nf_0,\cdots,\nf_q)$, such that (i) $\nf_{j+1}$ is a child of $\nf_j$ for $0\leq j\leq q-1$, and the other two children of $\nf_j$ are leaves, and (ii) for $0\leq j\leq q-1$, there is a child $\mf_j$ of $\nf_j$, which has opposite sign with $\nf_{j+1}$, and is paired (as a leaf) to a child $\pf_{j+1}$ of $\nf_{j+1}$. We also define $\pf_0$ to be the child of $\nf_0$ other than $\nf_1$ and $\mf_0$.
\end{df}
\begin{df}[Definition 8.2 of \cite{DH}]\label{equivirrechain} Consider any irregular chain $\Hc=(\nf_0,\cdots,\nf_q)$. By Definition \ref{irrechain}, we know $\pf_j$ is the child of $\nf_j$ other than $\nf_{j+1}$ and $\mf_j$ for $0\leq j\leq q-1$, thus $\pf_j$ has the same sign with $\nf_j$ (hence it is either its first or third child). Now for two irregular chains $\Hc=(\nf_0,\cdots,\nf_q)$ and $\Hc'=(\nf_0',\cdots,\nf_q')$, with $\pf_j$ and $\pf_j'$ etc. defined accordingly, we say they are \emph{congruent}, if $\zeta_{\nf_0}=\zeta_{\nf_0'}$, and for each $0\leq j\leq q-1$, either $\pf_j$ is the first child of $\nf_j$ and $\pf_j'$ is the first child of $\nf_j'$, or $\pf_j$ is the third child of $\nf_j$ and $\pf_j'$ is the third child of $\nf_j'$, counting from left to right.

In particular, if $q$ and the congruence class (and hence $\zeta_{\nf_0}$) are fixed, then an irregular chain $\Hc$ is uniquely determined by the signs $\zeta_{\nf_j}$ for $1\leq j\leq q$. We relabel the nodes $\nf_j,\pf_j\,(0\leq j\leq q)$ by defining $\{\bff_j,\cf_j\}=\{\nf_j,\pf_j\}$, and that $\bff_j=\nf_j$ if and only if $\zeta_{\nf_j}=+$. Further, we label the two children of $\nf_q$ other than $\pf_q$ as $\ef$ and $\ff$, with $\zeta_\ef=+$ and $\zeta_\ff=-$.
\end{df}
\begin{prop}[Proposition 8.3 of \cite{DH}]\label{congdec} Let $\Hc=(\nf_0,\cdots,\nf_q)$ be an irregular chain. For any decoration $\Ds$ (or $\Es$), its restriction to $\nf_j\,(0\leq j\leq q)$ and their children is uniquely determined by $2(q+2)$ vectors $k_j,\ell_j\in\Zb_L^d\,(0\leq j\leq q+1)$, such that $k_{\bff_j}=k_j$ and $k_{\cf_j}=\ell_j$ for $0\leq j\leq q$, and $k_\ef=k_{q+1}$ and $k_\ff=\ell_{q+1}$. These vectors satisfy
\[k_0-\ell_0=k_1-\ell_1=\cdots =k_{q+1}-\ell_{q+1}:=h,\] and for each $0\leq j\leq q$ we have $\zeta_{\nf_j}\Omega_{\nf_j}=2\langle h,k_{j+1}-k_j\rangle_\beta$. Moreover $\epsilon_{k_{\nf_{j1}}k_{\nf_{j2}}k_{\nf_{j3}}}=\epsilon_{k_{j+1}\ell_{j+1}\ell_j}$, where $(\nf_{j1},\nf_{j2},\nf_{j3})$ are the children of $\nf_j$ from left to right. We say this decoration has \emph{small gap}, \emph{large gap} or \emph{zero gap} with respect to $\Hc$, if we have $0<|h|\leq \frac{1}{100\delta L}$, $|h|\geq \frac{1}{100\delta L}$ or $h=0$.
\end{prop}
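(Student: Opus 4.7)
The plan is to verify the three assertions (unique parametrization by the $k_j, \ell_j$ with common gap $h$; the resonance formula for $\Omega_{\nf_j}$; the $\epsilon$-identity) by a local computation at each $\nf_j$, then to propagate the resulting relation along the chain via the leaf pairings. I would start by expanding the decoration constraint at $\nf_j$ for $0 \le j \le q-1$: its children are $\nf_{j+1}, \mf_j, \pf_j$ with $\zeta_{\pf_j} = \zeta_{\nf_j}$ and $\zeta_{\mf_j} = -\zeta_{\nf_{j+1}}$. Rearranging $\zeta_{\nf_j} k_{\nf_j} = \zeta_{\nf_{j+1}} k_{\nf_{j+1}} - \zeta_{\nf_{j+1}} k_{\mf_j} + \zeta_{\nf_j} k_{\pf_j}$ produces
\[
\zeta_{\nf_j}(k_{\nf_j} - k_{\pf_j}) = \zeta_{\nf_{j+1}}(k_{\nf_{j+1}} - k_{\mf_j}).
\]
Combining this with the leaf pairing $k_{\mf_j} = k_{\pf_{j+1}}$ and the $\bff/\cf$ relabeling --- which is designed precisely so that $k_{\bff_j} - k_{\cf_j} = \zeta_{\nf_j}(k_{\nf_j} - k_{\pf_j})$ in both sign regimes --- the left-hand side becomes $k_j - \ell_j$ and the right-hand side $k_{j+1} - \ell_{j+1}$. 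Hence $k_j - \ell_j$ is a chain-invariant $h$. At $\nf_q$ the same decoration rule applied to the children $\pf_q, \ef, \ff$ gives $k_\ef - k_\ff = \zeta_{\nf_q}(k_{\nf_q} - k_{\pf_q}) = h$, which is the $j = q+1$ instance. Unique parametrization is then immediate by inversion: $(k_j,\ell_j)$ determines $k_{\nf_j}, k_{\pf_j}$ via the $\bff/\cf$ labels, $k_{\mf_j}$ via the pairing, and $k_\ef, k_\ff$ from the endpoint data.

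For the resonance formula I would invoke the algebraic identity \eqref{res}, $\Omega_\nf = 2\langle k_{\nf_1} - k_\nf, k_\nf - k_{\nf_3}\rangle_\beta$, valid for any designation of the two outer children as $\nf_1, \nf_3$. Take $\nf_3 = \pf_j$; the previous step gives $k_{\nf_j} - k_{\pf_j} = \zeta_{\nf_j} h$. The other outer child is $\nf_{j+1}$ when $\zeta_{\nf_{j+1}} = \zeta_{\nf_j}$, in which case the $\bff/\cf$ labels immediately give $k_{\nf_{j+1}} - k_{\nf_j} = k_{j+1} - k_j$. Otherwise $\zeta_{\nf_{j+1}} = -\zeta_{\nf_j}$, the outer child is $\mf_j$, and the decoration equation specializes to $k_{\mf_j} + k_{\pf_j} = k_{\nf_j} + k_{\nf_{j+1}}$, so $k_{\mf_j} - k_{\nf_j} = k_{\nf_{j+1}} - k_{\pf_j}$, which reduces to $k_{j+1} - k_j$ after reading off the $\bff/\cf$ labels. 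In either regime the inner product collapses to $\zeta_{\nf_j}\langle h, k_{j+1}-k_j\rangle_\beta$, yielding $\zeta_{\nf_j}\Omega_{\nf_j} = 2\langle h, k_{j+1}-k_j\rangle_\beta$.

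The $\epsilon$-identity then follows from the symmetry $\epsilon_{abc} = \epsilon_{cba}$ together with the fact that $\epsilon_{abc}$ depends only on the equality pattern of its three arguments (whether the middle entry coincides with either outer entry, and whether all three are equal). Across the four parity cases of $(\zeta_{\nf_j}, \zeta_{\nf_{j+1}})$ one checks that the ordered triple $(k_{\nf_{j1}}, k_{\nf_{j2}}, k_{\nf_{j3}})$ is, up to swapping the outer slots, either $(k_{j+1}, \ell_{j+1}, \ell_j)$ or $(\ell_{j+1}, k_{j+1}, k_j)$. Both triples realize the same equality pattern --- middle equals one outer iff $h = 0$, middle equals the other outer iff $k_{j+1} = k_j$ --- so both evaluate to $\epsilon_{k_{j+1}\ell_{j+1}\ell_j}$. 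The main source of difficulty throughout is the sign bookkeeping in the first step: the $\bff/\cf$ convention was engineered so that $\zeta_{\nf_j}(k_{\nf_j}-k_{\pf_j})$ always equals $k_j - \ell_j$, and checking this uniformly across the four sign cases is the one place mistakes are easy to introduce; once it is handled, the resonance computation and the $\epsilon$-identity reduce to the short local verifications above.
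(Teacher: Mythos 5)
Your proposal is correct and follows the same route as the argument the paper defers to (Proposition 8.3 of \cite{DH}): a direct unwinding of the decoration relation $\zeta_{\nf_j}(k_{\nf_j}-k_{\pf_j})=\zeta_{\nf_{j+1}}(k_{\nf_{j+1}}-k_{\mf_j})$ combined with the leaf pairing and the $\bff/\cf$ sign convention, followed by the four-case check for the resonance and $\epsilon$ identities. The only cosmetic gap is that your resonance and $\epsilon$ case analysis is phrased for $0\le j\le q-1$ (outer child $\nf_{j+1}$ or $\mf_j$); at $j=q$ the identical computation goes through with $\ef$ or $\ff$ playing that role, using $k_\ef=k_{q+1}$, $k_\ff=\ell_{q+1}$.
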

\begin{proof} See Proposition 8.3 of \cite{DH}.
\end{proof}
\begin{df}[Definition 8.4 of \cite{DH}]\label{conggen} Let $\Hc=(\nf_0,\cdots,\nf_q)$ be an irregular chain contained in a garden $\Gc$ or a paired tree $\Tc$. If we replace $\Hc$ by a congruent irregular chain $\Hc'=(\nf_0',\cdots,\nf_q')$, then we obtain a modified couple $\Gc'$ or paired tree $\Tc'$ by (i) attaching the same subtree of $\ef$ and $\ff$ in $\Gc$ (or $\Tc$) to the bottom of $\ef'$ and $\ff'$, and (ii) assigning to $\nf_0'$ the same parent of $\nf_0$ and keeping the rest of the couple unchanged.

Given a marked prime garden $\Gc_{sk}$, we identify all the maximal irregular chains $\Hc=(\nf_0,\cdots,\nf_q)$, such that $q\geq 10^3d$, and all $\nf_j$ and their children have mark $\Lf$. For each such maximal irregular chain $\Hc$, consider $\Hc^\circ=(\nf_5,\cdots,\nf_{q-5})$ formed by omitting $5$ nodes at both ends (so that it does not affect other possible irregular chains). We define another marked prime couple $\widetilde{\Gc}_{sk}$ to be \emph{congruent} to $\Gc_{sk}$, if it can be obtained from $\Gc_{sk}$ by changing each of the irregular chains $\Hc^\circ$ to a congruent irregular chain, as described above.

Given a marked garden $\Gc$, we define $\widetilde{\Gc}$ to be congruent to $\Gc$, if it can be formed as follows. First obtain the (marked) skeleton $\Gc_{sk}$ and change it to a congruent marked prime couple $\widetilde{\Gc}_{sk}$. Then, we attach the regular couples $\Qc^{(\mf,\mf')}$ and regular trees $\Tc^{(\mf)}$ from $\Gc$ to the relevant leaf pairs and branching nodes of $\widetilde{\Gc}_{sk}$. Note that if an irregular chain $\Hc^\circ=(\nf_0,\cdots,\nf_q)$ in $\Gc_{sk}$ is replaced by $(\Hc^\circ)'=(\nf_0',\cdots,\nf_q')$ in $\widetilde{\Gc}_{sk}$, with relevant nodes $\mf_j, \pf_j$ etc. as in Definition \ref{irrechain}, then for $0\leq j\leq q-1$, the same regular couple $\Qc^{(\mf_j,\pf_{j+1})}$ is attached to the leaf pair $\{\mf_j',\pf_{j+1}'\}$ in $\widetilde{\Gc}_{sk}$. Similarly, for $1\leq j\leq q$, if $\zeta_{\nf_j'}=\zeta_{\nf_j}$ then the same regular tree $\Tc^{(\nf_j)}$ is placed at the branching node $\nf_j'$ in $\widetilde{\Gc}_{sk}$; otherwise the conjugate regular tree $\overline{\Tc^{(\nf_j)}}$ is placed at $\nf_j'$.

Note that the congruence relation preserves the scale of each tree of a garden; i.e. if $\Gc=(\Tc_1,\cdots,\Tc_{2R})$ and $\widetilde{\Gc}=(\widetilde{\Tc}_1,\cdots,\widetilde{\Tc}_{2R})$ are congruent, then the scale of $\Tc_j$ equals the scale of $\widetilde{\Tc}_j$ for $1\leq j\leq 2R$.
\end{df}
\subsection{Expressions associated with irregular chains}\label{irrered} We shall analyze the expressions associated with irregular chains, in the same way as Section 8.3 of \cite{DH}.

Given \emph{one congruence class} $\Fs$ of marked gardens as in Definition \ref{conggen}, consider the sum
\begin{equation}\label{irrechainsum}\sum_{\Gc\in\Fs}\Mc_\Gc(t,k_1,\cdots,k_{2R}),\end{equation} which is taken over all \emph{marked gardens} $\Gc\in\Fs$. Let the lengths of all the irregular chains $\Hc^\circ$ involved in the congruence class $\Fs$, as in Definition \ref{conggen}, be $q_1,\cdots,q_r$, then $|\Fs|=2^Q$ where $Q=q_1+\cdots+ q_r$. Since these irregular chains do not affect each other, we may focus on one individual chain, say $\Hc^\circ=(\nf_0,\cdots,\nf_q)$; that is, we only sum over $\Gc\in \Fs$ obtained by altering this irregular chain $\Hc^\circ$.

In the summation and integration in (\ref{bigformula2}), we will first fix all the variables $k_\nf$ and $t_\nf$, \emph{except} $k_\nf$ with $\nf\in\{\nf_j,\pf_j,\mf_{j-1}\}\,(1\leq j\leq q)$ and $t_\nf$ with $\nf=\nf_j\,(1\leq j\leq q-1)$, and sum and integrate over these variables. Note that we are fixing $k_{\nf_0}$ and $k_{\pf_0}$ as well as $k_\ef$ and $k_\ff$, in the notation of Definition \ref{equivirrechain}, and are thus fixing $(k_0,\ell_0,k_{q+1},\ell_{q+1})$ and $k_0-\ell_0=k_{q+1}-\ell_{q+1}=h$ as in Proposition \ref{congdec}. It is easy to see that in the summation and integration in (\ref{bigformula2}) over the \emph{fixed variables} (i.e. those $k_\nf$ and $t_\nf$ not in the above list), the summand and integrand does not depend on the way $\Hc^\circ$ is changed, because the rest of the couple is preserved under the change of $\Hc^\circ$, by Definition \ref{conggen}.

We thus only need to consider the sum and integral over the variables listed above. By Proposition \ref{congdec}, this is the same as the sum over the variables $k_j\,(1\leq j\leq q)$, with $\ell_j:=k_j-h$, and integral over the variables $t_j:=t_{\nf_j}\,(1\leq j\leq q-1)$, which satisfies $t_0>t_1>\cdots >t_{q-1}>t_q$ with $t_0:=t_{\nf_0}$ and $t_q:=t_{\nf_q}$. For any possible choice of $\Hc^\circ$ (there are $2^q$ of them), the sum and integral can be written, using (\ref{bigformula2}) and Proposition \ref{congdec}, as
\begin{multline}\label{irrechainexp}\sum_{k_1,\cdots,k_q}\int_{t_0>t_1>\cdots >t_{q-1}>t_q}\bigg(\frac{\delta}{2L^{d-1}}\bigg)^q\prod_{j=1}^q(i\zeta_{\nf_j})\prod_{j=0}^{q}\epsilon_{k_{j+1}\ell_{j+1}\ell_j}\\\times\prod_{j=0}^q e^{2\pi i\delta L^2\langle h,k_{j+1}-k_j\rangle_\beta t_j}\prod_{j=1}^{q}\Kc_{j,\Hc^\circ}\cdot \Kc_{j,\Hc^\circ}^*\,\mathrm{d}t_1\cdots\mathrm{d}t_{q-1}.\end{multline} Here in (\ref{irrechainexp}), we have
\[\Kc_{j,\Hc^\circ}=\Kc_j(t_{j},t_{j-1},k_j-h),\quad \Kc_{j,\Hc^\circ}^*=\Kc_j^*(t_{j-1},t_j,k_j)\] if $\zeta_{\nf_j}=+$, and
\[\Kc_{j,\Hc^\circ}=\Kc_j(t_{j-1},t_{j},k_j),\quad \Kc_{j,\Hc^\circ}^*=\overline{\Kc_j^*(t_{j-1},t_j,k_j-h)}\] if $\zeta_{\nf_j}=-$, where $\Kc_j=(\Kc_{\Qc^{(\pf_j,\mf_{j-1})}})_{\mathrm{app}}$ and $\Kc_j^*=(\Kc_{\Tc^{(\nf_j)}}^*)_{\mathrm{app}}$ where $\Tc^{(\nf_j)}$ is chosen to have sign $+$; note that if $\overline{\Tc}$ is the regular tree conjugate to $\Tc$ then $\Kc_{\overline{\Tc}}^*=\overline{\Kc_\Tc^*}$, and the same holds for the leading contribution $(\cdots)_{\mathrm{app}}$.

Note that, to calculate the above-mentioned contribution (i.e. the sum (\ref{irrechainsum}) with only $\Hc^\circ$ altered), we need to sum over all possible choices of $\Hc^\circ$ (i.e. all possible choices of $\zeta_{\nf_j}\,(1\leq j\leq q)$), in addition to the summation and integration in (\ref{irrechainexp}). This results in the expression
\begin{equation}\label{irrechainexp2}\sum_{\zeta_{\nf_j}\in\{\pm\}\,(1\leq j\leq q)}(\ref{irrechainexp}) = \mathrm{some\ function\ of\ }(k_0,\ell_0,k_{q+1},\ell_{q+1},t_0,t_q).
\end{equation}

Now (\ref{irrechainexp2}) is exactly the same expression that is explicitly calculated in Sections 8.3.1 and 8.3.2 of \cite{DH}, so we shall take the results of such calculations from \cite{DH} and apply them below. There are three cases depending on the value of $h:=k_0-\ell_0$.
\begin{enumerate}
\item The zero gap case ($h=0$): this is very easy, as we have $k_j=\ell_j$, so in view of the $\epsilon_{k_{j+1}\ell_{j+1}\ell_j}$ factors we must have $k_1=\cdots=k_q=k_0$, so the expression (\ref{irrechainexp}) gains a large negative power of $L$, and can be treated in the same way as the small gap term below.
\item The small gap case ($0<|h|\leq (100\delta L)^{-1}$): we have
\begin{equation}\label{summary1}(\ref{irrechainexp2})=(C^+\delta)^{m_{\mathrm{tot}}}(i\delta/2)^q\int_\Rb\int_0^1G(\lambda)\Pc(\lambda,\sigma,k_0,\ell_0)\cdot\dirac(t_0-t_q-\sigma)e^{\pi i\delta L^2\Omega^*t_q}e^{\pi i\lambda t_q}\,\mathrm{d}\sigma\mathrm{d}\lambda.
\end{equation} Here $m_{\mathrm{tot}}$ is the sum of the scales of all regular couples $\Qc_{(\pf_j,\mf_{j-1})}$ and regular trees $\Tc_{(\nf_j)}$, $\Omega^*:=|k_{q+1}|_\beta^2-|\ell_{q+1}|_\beta^2+|\ell_0|_\beta^2-|k_0|_\beta^2$, and the functions $G$ and $\Pc$ satisfy
\begin{equation}\label{coefgpbd}\|\langle \lambda\rangle^{\frac{1}{18}}G\|_{L^1}\lesssim (C^+)^{m_{\mathrm{tot}}},\quad \sup_{\lambda,k_0,\ell_0}\int_0^1|\Pc(\lambda,\sigma,k_0,\ell_0)|\,\mathrm{d}\sigma\lesssim L^{-40d}.
\end{equation}
\item The large gap case ($|h|>(100\delta L)^{-1}$): we have the same expression (\ref{summary1}) and the same bound (\ref{coefgpbd}), but the factor $L^{-40d}$ on the right hand side of the second inequality of (\ref{coefgpbd}) should be replaced by $1$.
\end{enumerate}

Below we will ignore the zero gap case. In the other two cases, we define the new marked garden $\Gc_{sk}^<$ as follows. In the small gap case, and in the large gap case assuming also $k_0\neq k_{q+1}$, we remove the whole chain $\Hc^\circ$ by setting $(\pf_0,\ef,\ff)$ (see Definition \ref{equivirrechain}) to be the three children nodes of $\nf_0$, with the order determined by their signs and the relative position of $\pf_0$, and remove the other nodes (i.e. $(\nf_j,\pf_j)$ for $1\leq j\leq q$ and $\mf_j$ for $0\leq j\leq q-1$). In the large gap case assuming $k_0=k_{q+1}$, we must have $k_0\neq k_q$ since $k_q\neq k_{q+1}$ in view of the factor $\epsilon_{k_{q+1}\ell_{q+1}\ell_q}$ in (\ref{irrechainexp}), so in this case we remove the chain $(\nf_0,\cdots,\nf_{q-1})$, which is the chain $\Hc^\circ$ less one node, in the same way as above.

In either case, denote the scale of $\Gc_{sk}^<$ by $m_0^<$. Note that $\Gc_{sk}^<$ does not depend on the choice of $\Hc^\circ$ in the fixed congruence class (unless in the large gap case, where this dependence does not matter), and for the decoration of $\Gc_{sk}^<$ coming from the decoration of $\Gc_{sk}$, we have $\zeta_{\nf_0}\Omega_{\nf_0}=\Omega^*$ for each choice of $\Hc^\circ$. Then, we can reduce the expression
\begin{equation}\label{onechainsum}\sum_\Gc\Mc_\Gc^{(*)}(t,k_1,\cdots,k_{2R})\end{equation} using (\ref{summary1}), where in (\ref{onechainsum}) the sum is taken over all marked gardens $\Gc$ formed by altering the irregular chain $\Hc^\circ$ in $\Gc_{sk}$, and $(*)$ represents either ``$\mathrm{sg}$" or ``$\mathrm{lg}$", where we restrict to the small gap or large gap case. In fact, using (\ref{summary1}) we have
\begin{multline}\label{onechainsum2}(\ref{onechainsum})=(C^+\delta)^{m_{\mathrm{tot}}}(i\delta/2)^q\cdot\bigg(\frac{\delta}{2L^{d-1}}\bigg)^{m_0^<}\zeta^*(\Gc_{sk}^<)\int_\Rb G(\lambda)\,\mathrm{d}\lambda\int_0^1\mathrm{d}\sigma\cdot\sum_{\Is_{sk}^<}\int_{\widetilde{\Ic}_{sk}^<}\widetilde{\epsilon}_{\Is_{sk}^<}\cdot\Pc(\lambda,\sigma,k_0,\ell_0)\\\times e^{\pi i\lambda t_{\nf_0}}\prod_{\nf\in (\Nc_{sk}^<)^*} e^{\zeta_\nf\pi i\cdot\delta L^2\Omega_\nf t_\nf}\,\mathrm{d}t_\nf{\prod_{\mf\in(\Lc_{sk}^<)^*}^{(+)}\Kc_{\Qc^{(\mf,\mf')}}(t_{\mf^p},t_{(\mf')^p},k_\mf)}\prod_{\mf\in(\Nc_{sk}^<)^*}\Kc_{\Tc^{(\mf)}}^*(t_{\mf^p},t_{\mf},k_\mf).
\end{multline} Here in (\ref{onechainsum2}) the sum is taken over all $(k_1,\cdots,k_{2R})$-decorations $\Is_{sk}^<$ of $\Gc_{sk}^<$, and the other notations are all associated with $\Gc_{sk}^<$, except $\widetilde{\Ic}_{sk}^<$ and $\widetilde{\epsilon}_{\Is_{sk}^<}$; instead, for $\widetilde{\Ic}_{sk}^<$ we add the one extra condition $t_{\nf_0^p}>t_{\nf_0}+\sigma$ (where $\nf_0^p$ is the parent of $\nf_0$) to the original definition (\ref{timegarden}). As for $\widetilde{\epsilon}_{\Is_{sk}^<}$, in the ``$\mathrm{sg}$" case we remove the one factor $\epsilon_{k_{\nf_{01}}k_{\nf_{02}}k_{\nf_{03}}}$ (where $\nf_{0j}$ are the children of $\nf_0$ from left to right) from the original definition (\ref{defcoef}), while in the ``$\mathrm{lg}$" case we set it to be the same as $\epsilon_{\Is_{sk}^<}$. Moreover, the variables $(k_0,\ell_0)$ are defined as in Definition \ref{congdec}, and the functions $G$ and $\Pc$  etc., are as in (\ref{summary1}), which satisfy either (\ref{coefgpbd}) or the alternative version in the ``$\mathrm{lg}$" case. We also insert the corresponding ``$\mathrm{sg}$" or ``$\mathrm{lg}$" cutoffs restricting to $0<|h|\leq 1/(100\delta L)$ or $|h|>1/(100\delta L)$ in (\ref{onechainsum2}). Finally, in the functions $\Kc_{\Tc^{(\nf_0)}}^*$ and $\Kc_{\Qc^{(\mf,\mf')}}$ for the leaf pair $\{\mf,\mf'\}$ containing $\pf_0$, the input variable $t_{\nf_0}$ should be replaced by $t_{\nf_0}+\sigma$.
\begin{rem}\label{newres} In the small gap case, due to the absence of $\epsilon_{k_{\nf_{01}}k_{\nf_{02}}k_{\nf_{03}}}$ in $\widetilde{\epsilon}_{\Is_{sk}^<}$, in the summation in (\ref{onechainsum2}), the decoration $(k_\nf)$ may be resonant at the node $\nf_0$ (i.e. $(k_{\nf_{01}},k_{\nf_{02}},k_{\nf_{03}})\not\in\Sf$, see (\ref{defset})), but it must not be resonant at any other branching node. This resonance may lead to an (at most) $L^{4d}$ loss in the counting estimates in Proposition \ref{gain}, but this can always be covered by the $L^{-40d}$ gain from $\Pc$ in (\ref{coefgpbd}). See Remark \ref{countingrem} for further explanation.
\end{rem}

\subsection{Summary}\label{summarysec} Now we may repeat the reduction described above for every irregular chain $\Hc^\circ$ in $\Gc_{sk}$, noticing that these irregular chains do not affect each other, in the same way as in Section 8.4 of \cite{DH}. Let $\Gc_{sk}^\#$ be the marked garden obtained by removing all the irregular chains $\Hc^\circ$ from $\Gc_{sk}$ as described above in Section \ref{irrered}. This does not depend on the choice of $\Gc_{sk}$ in the fixed congruence class, nor on the choice of $\Gc\in\Fs$. We then have
\begin{multline}\label{irrechainsum2} (\ref{irrechainsum})=(C^+\delta)^{m_1}\bigg(\frac{\delta}{2L^{d-1}}\bigg)^{m_0'}\zeta^*(\Gc_{sk}^\#)\int_{\Rb^\Xi} G(\vlambda)\,\mathrm{d}\vlambda\int_{[0,1]^\Xi}\mathrm{d}\vsigma\sum_{\Is_{sk}^\#}\int_{\widetilde{\Ic}_{sk}^\#}\epsilon_{\Is_{sk}^\#}\Pc(\vlambda,\vsigma,k[\Gc_{sk}^\#])\prod_{\nf\in \Xi}e^{\pi i\lambda_\nf t_\nf}\\\times\prod_{\nf\in (\Nc_{sk}^\#)^*} e^{\zeta_\nf\pi i\cdot\delta L^2\Omega_\nf t_\nf}\,\mathrm{d}t_\nf{\prod_{\mf\in(\Lc_{sk}^\#)^*}^{(+)}\Kc_{\Qc^{(\mf,\mf')}}(t_{\mf^p},t_{(\mf')^p},k_\mf)}\prod_{\mf\in(\Nc_{sk}^\#)^*}\Kc_{\Tc^{(\mf)}}^*(t_{\mf^p},t_{\mf},k_\mf).
\end{multline} Here in (\ref{irrechainsum2}), $m_0'$ is the scale of $\Gc_{sk}^\#$ and $m_1$ is the sum of all the $m_{\mathrm{tot}}$ and $q$ in (\ref{onechainsum2}), the summation is taken over all $k$-decorations $\Is_{sk}^\#$ of $\Gc_{sk}^\#$, and the other notations are all associated with $\Gc_{sk}^\#$, except $\widetilde{\Ic}_{sk}^\#$; instead, for $\widetilde{\Ic}_{sk}^\#$ we add the extra conditions $t_{\nf^p}>t_{\nf}+\sigma_\nf$ (where $\nf^p$ is the parent of $\nf$) to the original definition (\ref{timegarden}), for $\nf\in\Xi$, where $\Xi$ is a subset of the set $(\Nc_{sk}^\#)^*$ of branching nodes. The vector parameters are $\vlambda=\lambda[\Xi]\in\Rb^\Xi$ and $\vsigma=\sigma[\Xi]\in[0,1]^\Xi$ respectively, and $k[\Gc_{sk}^\#]$ is the vector of all the $k_\nf$'s. The functions $G(\vlambda)$ and $\Pc(\vlambda,\vsigma,k[\Gc_{sk}^\#])$ satisfy the bounds
\begin{equation}\label{tensorbd}\bigg\|\prod_{\nf\in\Xi}\langle \lambda_\nf\rangle^{\frac{1}{18}}G\bigg\|_{L^1}\lesssim (C^+)^m,\quad \sup_{\vlambda,k[\Qc_{sk}^\#]}\int_{[0,1]^\Xi}|\Pc(\vlambda,\vsigma,k[\Qc_{sk}^\#])|\,\mathrm{d}\vsigma\lesssim 1.
\end{equation} We also insert various small gap or large gap cutoff functions, and some input variables in some of the $\Kc_{\Qc^{(\mf,\mf')}}$ or $\Kc_{\Tc^{(\mf)}}^*$ functions may be translated by some $\sigma_\nf$, in the same way as in (\ref{onechainsum2}). Finally, the function $\epsilon_{\Is_{sk}^\#}$ may miss a few $\epsilon_{k_{\nf}k_{\nf_1}k_{\nf_2}k_{\nf_3}}$ factors compared to the original definition (\ref{defcoef}), but for each such missing factor we can gain a power $L^{-40d}$ on the right hand side in the second inequality in (\ref{tensorbd}).

At this point, we may expand the functions $\Kc_{\Qc^{(\mf,\mf')}}$ and $\Kc_{\Tc^{(\mf)}}^*$ (or their leading or remainder contributions) using their Fourier $L^1$ (or $X_{\mathrm{loc}}^\kappa$) bounds, and combine the $\Kc$ factors and the $\Pc$ factor in (\ref{irrechainsum2}), to further reduce to the expression
\begin{multline}\label{irrechainsum3} (\ref{irrechainsum})=(C^+\delta)^{\frac{m-m_0'}{2}}\bigg(\frac{\delta}{2L^{d-1}}\bigg)^{m_0'}\zeta^*(\Gc_{sk}^\#){\int_{\Rb^{\Lambda}\times\Rb^2} G(\vlambda)\,\mathrm{d}\vlambda\cdot e^{\pi i(\lambda t+\mu s)}}\int_{[0,1]^\Xi}\mathrm{d}\vsigma\sum_{\Is_{sk}^\#}\int_{\widetilde{\Ic}_{sk}^\#}\epsilon_{\Is_{sk}^\#}\\\times\prod_{\nf\in {\Lambda}} e^{\zeta_\nf\pi i\cdot\delta L^2\Omega_\nf t_\nf}\prod_{\nf\in \Lambda}e^{\pi i\lambda_\nf t_\nf}\,\mathrm{d}t_\nf\cdot \Xc_{\mathrm{tot}}(\vlambda,\vsigma,k[\Gc_{sk}^\#]),
\end{multline}
Here in (\ref{irrechainsum3}) the set {$\Lambda=(\Nc_{sk}^\#)^*$ and $\vlambda=(\lambda[\Lambda],\lambda,\mu)\in\Rb^\Lambda\times\Rb^2$}, the function $G$ is different from the one in (\ref{irrechainsum2}), but still satisfies the same first inequality in (\ref{tensorbd}) with weights in $\lambda$ and $\mu$ also included. Using the second inequality in (\ref{tensorbd}), the $X_{\mathrm{loc}}^\kappa$ bounds for $\Kc_{\Qc^{(\mf,\mf')}}$ and $\Kc_{\Tc^{(\mf)}}^*$ and their components, and the definition of markings $\Lf$ and $\Rf$, we deduce that the function $\Xc_{\mathrm{tot}}$ satisfies
\begin{equation}\label{xtotbound}\int_{[0,1]^\Xi}|\Xc_{\mathrm{tot}}(\vlambda,\vsigma,k[\Gc_{sk}^\#])|\,\mathrm{d}\vsigma\lesssim{\prod_{\lf\in(\Lc_{sk}^\#)^*}^{(+)}\langle k_\lf\rangle^{-40d}}\cdot L^{-2\nu r_0}
\end{equation} uniformly in $\vlambda$, where $r_0$ is the total number of branching nodes and leaf pairs that are marked $\Rf$ in the marked garden $\Gc_{sk}^\#$. In (\ref{xtotbound}) we can also gain a power $L^{-40d}$ per missing factor $\epsilon_{k_{\nf}k_{\nf_1}k_{\nf_2}k_{\nf_3}}$ in $\epsilon_{\Is_{sk}^\#}$, as described above.

Note that the garden $\Gc_{sk}^\#$ is still mixed, and prime. Moreover by definition, it \emph{does not contain an irregular chain of length $>10^3d$ with all branching nodes and leaf pairs marked $\Lf$}. In particular, if $r_0$ is the number of branching nodes and leaf pairs that are marked $\Rf$, $r_{\mathrm{irr}}$ is the number of maximal irregular chains, and $Q$ is the total length of these irregular chains, then we have \begin{equation}\label{typeIcontrol}Q\leq C(r_0+r_{\mathrm{irr}}).
\end{equation}Based on this information, as well as the first inequality in (\ref{tensorbd}) and (\ref{xtotbound}), we will establish an \emph{absolute upper bound} for the expression (\ref{irrechainsum3}). This will be done in the following two sections.
\section{Gardens and molecules}\label{improvecount}
\begin{df}[Definition 9.1 in \cite{DH}]\label{defmol} A \emph{molecule} $\Mb$ is a directed graph, formed by vertices (called \emph{atoms}) and edges (called \emph{bonds}), where multiple and self-connecting bonds are allowed. We will write $v\in \Mb$ and $\ell\in\Mb$ for atoms $v$ and bonds $\ell$ in $\Mb$; we also write $\ell\sim v$ if $v$ is an endpoint of $\ell$. We further require that (i) each atom has at most 2 outgoing bonds and at most 2 incoming bonds (a self-connecting bond counts as outgoing once and incoming once), and (ii) there is no \emph{saturated} (connected) component, where connectedness is always understood in terms of undirected graphs, and a component is saturated if it contains only degree 4 atoms. For a molecule $\Mb$ we define $V$ to be the number of atoms, $E$ the number of bonds and $F$ the number of components. Define $\chi:=E-V+F$.
\end{df}
\begin{df}[Definition 9.3 in \cite{DH}]\label{gardenmol} Given a garden $\Gc$, define the molecule $\Mb$ associated with $\Gc$, as follows. The atoms of $\Gc$ are all the $4$-element subsets formed by a branching node in $\nf\in\Nc^*$ and its three children nodes. For any two atoms, we connect them by a bond if either (i) a branching node is the parent in one atom and a child in the other, or (ii) two leaves from these two atoms are paired with each other. We call this bond a PC (parent-child) bond in case (i) and a LP (leaf-pair) bond in case (ii). Note that multiple bonds are possible, and a self-connecting bond occurs when two sibling leaves are paired.

We fix a direction of each bond as follows. If a bond corresponds to a leaf pair, then it goes from the atom containing the leaf with $-$ sign to the atom containing the leaf with $+$ sign. If a bond corresponds to a branching node $\nf$ that is not a root, suppose $\nf$ is the parent in the atom $v_1$ and is a child in the atom $v_2$, then the bond goes from $v_1$ to $v_2$ if $\nf$ has $+$ sign, and go from $v_2$ to $v_1$ otherwise. See Figure \ref{fig:gardenmol} for an example.
  \begin{figure}[h!]
  \includegraphics[scale=.45]{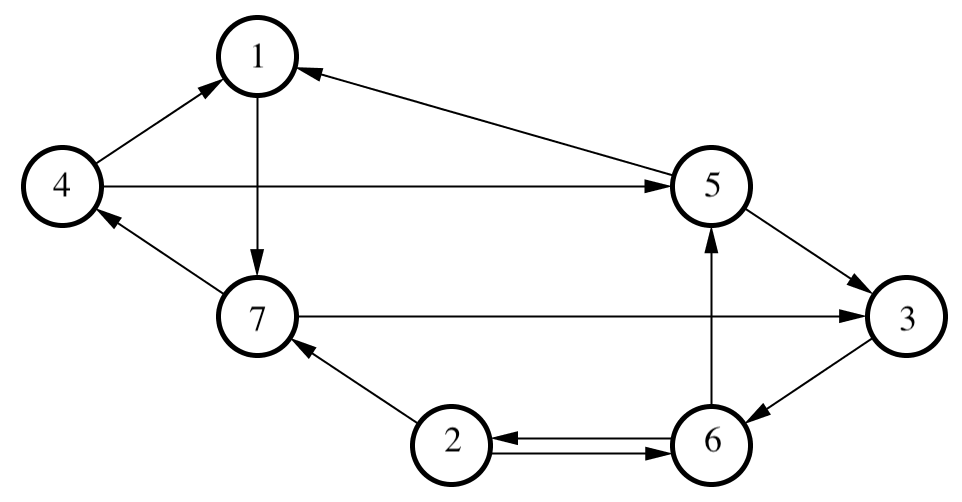}
  \caption{The molecule associated with the garden $\Gc$ in Figure \ref{fig:garden}. Here the atoms $1\!\sim\!4$ correspond to branching nodes $k_1\!\sim\!k_4$, and atoms $5\!\sim\!7$ correspond to branching nodes $\ell_1\!\sim\!\ell_3$ in $\Gc$.}
  \label{fig:gardenmol}
\end{figure}
\end{df}
\begin{prop}\label{basemol} Let $\Gc$ be a \emph{mixed} garden of width $2R$ and scale $m$. Then for the molecule $\Mb$ associated with $\Gc$ as in Definition \ref{gardenmol}, we have $\chi\leq m-R/2$.
\end{prop}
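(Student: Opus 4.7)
The plan is to compute $V$ and $E$ exactly using Definition \ref{gardenmol}, then bound the number of components $F$ using the mixed hypothesis, and combine via $\chi = E - V + F$.

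Since atoms correspond bijectively to branching nodes, I will start with $V = m$. To compute $E$, let $r'$ denote the number of non-trivial trees (scale $\geq 1$) and set $r'' = 2R - r'$. Each non-root branching node yields exactly one PC bond, giving $m - r'$ such bonds. For LP bonds, I will first argue using the mixed hypothesis that if two trivial trees had their unique leaves paired together, these two trees would form an internally closed pair, which is forbidden. Hence every trivial tree's leaf must pair with a leaf of some non-trivial tree, and since a trivial leaf lies in no atom, such a pair contributes no LP bond. The remaining $m + R - r''$ leaf pairs consist of two leaves in non-trivial trees, each producing one LP bond (a self-loop if the two leaves share an atom). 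Adding the two contributions will give $E = (m - r') + (m + R - r'') = 2m - R$.

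To bound $F$, I will introduce \emph{super-components}: minimal subsets $S \subseteq \{1, \ldots, 2R\}$ of trees such that every leaf of a tree in $S$ pairs with a leaf of another (possibly the same) tree in $S$. These partition the tree indices. Since the leaf-signs of $\Tc_j$ sum to $\zeta_j$ and each pair sums to zero, I get $\sum_{j \in S} \zeta_j = 0$, so $|S|$ is even. A super-component of size $2$ is exactly an internally closed pair of trees, which the mixed hypothesis forbids; hence $|S| \geq 4$ for every super-component, so the total number of super-components is at most $R/2$.

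The final step will be to identify the connected components of $\Mb$ with these super-components. Within each tree, the atoms are linked via PC bonds; within a minimal super-component $S$, the non-trivial trees must be interconnected by LP bonds, because otherwise I could split $S$ into two nonempty closed subsets (distributing the trivial trees according to the non-trivial tree each is paired with), contradicting the minimality of $S$. Conversely, every bond of $\Mb$ connects atoms whose trees lie in a common super-component, and the mixed hypothesis also precludes all-trivial super-components (which would consist of two paired trivial trees). Hence each super-component contributes exactly one molecule component, giving $F \leq R/2$ and therefore $\chi = (2m - R) - m + F \leq m - R/2$. The main subtlety will be verifying this bijection, in particular the connectedness of atoms within a super-component; the mixed hypothesis is used crucially twice, once to eliminate trivial-trivial pairings (to obtain the clean formula $E = 2m - R$) and once to rule out size-$2$ super-components (to obtain $F \leq R/2$).
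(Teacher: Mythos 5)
Your proposal is correct and follows essentially the same route as the paper: $V=m$, $E=2m-R$ by counting non-root branching nodes and non-root leaf pairs (using mixedness to exclude two paired trivial roots), and $F\leq R/2$ because each component must involve at least four trees. The only cosmetic differences are your ``super-component'' bookkeeping and your use of the sign-sum $\sum_{j\in S}\zeta_j=0$ to get evenness, where the paper instead uses the parity of the total leaf count; both are fine.
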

\begin{proof} Let $\Gc=(\Tc_1,\cdots,\Tc_{2R})$. By definition of mixed gardens we know that no $\Tc_i$ and $\Tc_j$ have their leaves completely paired. For the molecule $\Mb$, clearly the number of atoms $V=m$, since each atom in $\Mb$ corresponds to a unique branching node in $\Gc$. Moreover the number of bonds $E=2m-R$. This is because each bond corresponds to either a unique non-root leaf pair or a non-root branching node. The total number of leaf pairs and branching nodes (including roots) is $(m+R)+m=2m+R$, however each root should be subtracted once (it should be excluded from the set of branching nodes if it is a branching node, and should be excluded from the set of leaf pairs if it is a leaf and is paired to another leaf), and only once (because there do not exist two roots that are both leaves and are paired to each other). This implies $E=2m-R$ as there are $2R$ roots.

Finally, for any $\Tc_j\,(1\leq j\leq 2R)$, let $S_j$ be the set of atoms corresponding to branching nodes in $\Tc_j$, then $\Mb$ is the union of all $S_j\,(1\leq j\leq 2R)$. By definition all atoms in $S_j$ are connected to each other. Moreover, if some leaf in $\Tc_j$ is paired to some leaf in $\Tc_{j'}$ then $S_j$ and $S_{j'}$ are also connected to each other. Since the leaves in the union of any \emph{odd} number of trees $\Tc_j$ cannot all be paired with each other (since each $\Tc_j$ has an odd number of leaves), and also that the garden does not contain \emph{two} trees $\Tc_i$ and $\Tc_j$ with their leaves completely paired, we know that any connected component in $\Mb$ must be the union of at least \emph{four} $S_j$, in particular $F\leq R/2$. This implies that $\chi=E-V+F\leq m-R/2$.
\end{proof}
\begin{prop}\label{reconstruction} Fix $m$ and $R$. Given any molecule $\Mb$ of $m$ atoms, the number of gardens $\Gc$ of width $2R$ and scale $m$ that corresponds to $\Mb$ in the sense of Definition \ref{gardenmol} is at most $(CR)!C^m$.
\end{prop}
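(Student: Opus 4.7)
My plan is to follow the strategy of Proposition 9.6 in \cite{DH}, which gives the analogous $C^n$ bound in the width-$2$ (couples) case. The new feature in the garden setting is that the $2R$ constituent trees carry an ordering, which is responsible for the extra $(CR)!$ factor in the bound; the per-atom combinatorics of reconstruction from $\Mb$ is essentially the same as in \cite{DH}.

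First I will reconstruct the \emph{unordered} garden structure (the multiset of $2R$ trees together with the leaf pairing, forgetting the order of the trees) from the molecule $\Mb$. As in \cite{DH}, one builds the structure atom by atom and bond by bond. At each atom there are only boundedly many local choices: labeling its three children as left, mid, or right; classifying each adjacent bond as PC (parent-child) or LP (leaf-pair); and fixing orientations and signs. Similarly each bond has only a bounded number of type and direction choices. Thus each of the $m$ atoms contributes a bounded multiplicative factor, yielding at most $C^m$ unordered gardens that correspond to $\Mb$.

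Second, from each unordered garden I will recover an ordered garden of signature $(\zeta_1, \ldots, \zeta_{2R})$ by choosing a bijection from the $2R$ constituent trees to the $2R$ labeled slots consistent with the signature. This contributes at most $(2R)!$ choices, and since $(2R)! \leq (CR)!$ for any absolute constant $C \geq 2$, multiplying the two bounds yields the desired $(CR)! \cdot C^m$ estimate.

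The main delicate point I expect is the handling of roots of trees and of trivial trees. Roots of non-trivial trees must be recognized among the atoms as those having no incoming PC bond from a parent, while trivial trees contribute no atoms to $\Mb$ at all, and their single leaves must be identified as endpoints of LP bonds at other atoms. These subtleties are already present in \cite{DH}: the extra local choices at root atoms and at atoms adjacent to trivial-tree leaves can be absorbed into the per-atom constant in Step 1, while the decision of which of the $2R$ slots are occupied by trivial trees can be absorbed into the $(2R)!$ ordering factor in Step 2. Modulo this bookkeeping, the argument is a direct adaptation of the couples case in \cite{DH}.
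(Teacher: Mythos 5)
Your proposal is correct and follows essentially the same route as the paper: encode boundedly many local choices at each atom (which node of the corresponding $4$-node subset each incident bond represents), so that the molecule plus this $C^m$-sized code determines the branching-node structure, the leaf placements, and the leaf pairings, and then account for the assignment of the $2R$ roots to the ordered slots by a factor $(2R)!\leq (CR)!$. The paper likewise treats roots (as nodes that are not children of any other node) and trivial trees as the only delicate bookkeeping, exactly as you flag.
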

\begin{proof} This is basically the same as Proposition 9.6 in \cite{DH}. For each atom $v\in\Mb$, each bond $\ell\sim v$ corresponds to a unique node $\nf$ in the $4$-node subset corresponding to $v$. We may assign a code to this pair $(v,\ell)$ indicating the relative position of $\nf$ in this subset (say code $0$ if $\nf$ is the parent node, and codes $1$, $2$ or $3$ if $\nf$ is the left, mid or right child node). In this way we get an encoded molecule which has a code assigned to each pair $(v,\ell)$ where $\ell\sim v$. Clearly if $\Mb$ is fixed then the corresponding encoded molecule has at most $C^{m+R}$ possibilities, so it suffices to reconstruct $\Gc$ from the encoded molecule.

In fact, if the encoded molecule is fixed, then the branching nodes of $\Gc$ uniquely correspond to the atoms of $\Mb$. Moreover, the branching node corresponding to $v_2$ is the $\alpha$-th child of the branching node corresponding to $v_1$, if and only if $v_1$ and $v_2$ are connected by a bond $\ell$ such that the codes of $(v_1,\ell)$ and $(v_2,\ell)$ are $\alpha$ and $0$ respectively. Next, we can determine the leaves of $\Gc$ by putting a leaf as the $\alpha$-th child for each branching node and each $\alpha$, as long as this position is not occupied by another branching node; moreover, the $\alpha$-th child of the branching node corresponding to $v_1$ and the $\beta$-th child of the branching node corresponding to $v_2$ are paired, if and only if $v_1$ and $v_2$ are connected by a bond $\ell$ such that the codes of $(v_1,\ell)$ and $(v_2,\ell)$ are $\alpha$ and $\beta$ respectively.

Finally, note that a node $\nf$ is a root if and only if it is not a child of any other node, so we can uniquely identify the roots of the trees. Permuting these $2R$ roots leads to at most $(CR)!$ choices, and once a permutation is fixed, the garden $\Gc$ will also be fixed as the structure of each tree, as well as the leaf pairing structure, has been fixed as above. This gives at most $(CR)!C^m$ possible choices for $\Gc$. Note that, if one of the trees in $\Gc$ is trivial, then the reconstruction will be slightly different, but this does affect the result.
\end{proof}
\begin{df}[Definition 9.7 in \cite{DH}] We define the \emph{type I} and \emph{type II (molecular) chains} in a molecule $\Mb$, as in Figure \ref{fig:molechain}. Note that type I chains are formed by double bonds, and type II chains are formed by double bonds and pairs of single bonds. For type I chains, we require that the two bonds in any double bond have opposite directions. For type II chains, we require that any pair of single bonds have opposite directions, see Figure \ref{fig:molechain}.
  \begin{figure}[h!]
  \includegraphics[scale=.5]{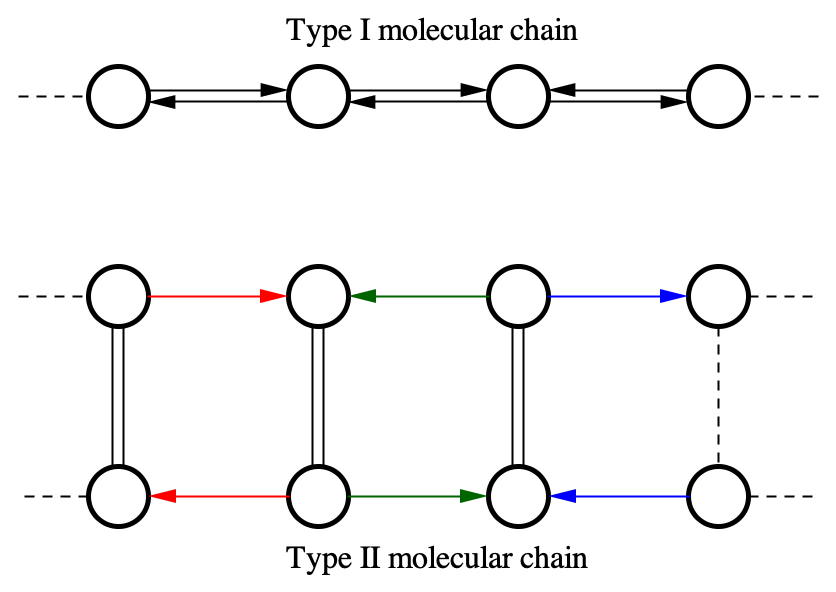}
  \caption{The two types of molecular chains. For type II, the single bonds of the same color are paired single bonds, and must have opposite directions.}
  \label{fig:molechain}
\end{figure} 
\end{df}
Given a molecule $\Mb$, the main subject of this section is the following counting problem associated with $\Mb$, similar to \cite{DH}.
\begin{df}[Definition 9.8 in \cite{DH}]\label{defdecmol} Given a molecule $\Mb$ and a set $S$ of atoms. Suppose we fix (i) $a_\ell\in\Zb_L^d$ for each bond $\ell\in\Mb$, (ii) $c_v\in\Zb_L^d$ for each (non-isolated, same below) atom $v\in\Mb$, assuming $c_v=0$ if $v$ has degree 4, (iii) $\Gamma_v\in\Rb$ for each atom $v$, and (iv) $f_v\in\Zb_L^d$ for each $v\in S$ with $d(v)<4$. Define $\Df(\Mb)$ to be the set of vectors $k[\Mb]:=(k_\ell)_{\ell\in\Mb}$, such that each $k_\ell\in\Zb_L^d$ and $|k_\ell-a_\ell|\leq 1$, and
\begin{equation}\label{decmole}\sum_{\ell\sim v}\zeta_{v,\ell}k_\ell=c_v,\quad\bigg|\sum_{\ell\sim v}\zeta_{v,\ell}|k_\ell|_\beta^2-\Gamma_v\bigg|\leq \delta^{-1}L^{-2}\end{equation} for each atom $v$. Here in (\ref{decmole}) the sum is taken over all bonds $\ell\sim v$, and $\zeta_{v,\ell}$ equals $1$ if $\ell$ is outgoing from $v$, and equals $-1$ otherwise. We also require that (a) the values of $k_\ell$ for different $\ell\sim v$ are all equal given each $v\in S$, and this value equals $f_v$ if also $d(v)<4$, and (b) for any $v\not\in S$ and any bonds $\ell_1,\ell_2\sim v$ of opposite directions (viewing from $v$), we have $k_{\ell_1}\neq k_{\ell_2}$. Note that this actually makes $\Df$ depending on $S$, but we will omit this dependence for simplicity. We say an atom $v$ is \emph{degenerate} if $v\in S$, and is \emph{tame} if moreover $d(v)<4$.

In addition, we may add some extra conditions to the definition of $\Df(\Mb)$. These conditions are independent of the parameters, and have the form of (combinations of) $(k_{\ell_1}-k_{\ell_2}\in E)$ for some bonds $\ell_1,\ell_2\in\Mb$ and fixed subsets $E\subset \Zb_L^d$. Let $\mathtt{Ext}$ be the set of these extra conditions, and denote the corresponding set of vectors $k[\Mb]$ be $\Df(\Mb,\mathtt{Ext})$. We are interested in the quantities $\sup\#\Df(\Mb,\mathtt{Ext})$, where the supremum is taken over all possible choices of parameters $(a_\ell,c_v,\Gamma_v,f_v)$.
\end{df}
\begin{rem}\label{moledec} The vectors $k[\Mb]$ will come from decorations of the garden $\Gc$ from which $\Mb$ is obtained. In fact, if $k[\Gc]$ is a $(k_1,\cdots,k_{2R})$-decoration of $\Gc$, then it uniquely corresponds to a vector $k[\Mb]$. Let $v\in\Mb$ be an atom corresponding to a branching node $\nf\in\Gc$. Then $d(v)=4$ unless $\nf$ is the root of some $\Tc_j$, or some other $\Tc_i$ is a trivial tree paired with a child of $\nf$ (there may be more than one such $i$).

It is easy to check, using Definitions \ref{defdec} and \ref{defdecmol}, that the followings hold. If $d(v)=4$ then $\sum_{\ell\sim v}\zeta_{v,\ell}k_\ell=0$, and $\sum_{\ell\sim v}\zeta_{v,\ell}|k_\ell|_\beta^2=-\zeta_\nf\Omega_\nf$. If $d(v)<4$, then the right hand sides of the above equations should be corrected by suitable algebraic sums of $k_j$ and (or) $k_i$, and $|k_j|_\beta^2$ and (or) $|k_i|_\beta^2$, where $j$ and $i$ are associated with $\nf$ as stated above. Note that all these $k_j$ and $k_i$ are fixed when considering the decoration $k[\Gc]$. Moreover if $(k_{\nf_1},k_{\nf_2},k_{\nf_3})\in\Sf$, then either the values of $k_\ell$ for different $\ell\sim v$ are all equal (and this value equals $k_j$ if $d(v)<4$ where $j$ is as above), or for any bonds $\ell_1,\ell_2\sim v$ of opposite directions we have $k_{\ell_1}\neq k_{\ell_2}$. Note that a degenerate atom corresponds exactly to a branching node $\nf$ for which $\epsilon_{k_{\nf_1}k_{\nf_2}k_{\nf_3}}=-1$.
\end{rem}\begin{prop}\label{gain} Let $\Mb$ be the molecule associated with a mixed garden $\Gc$ of width $2R$ and scale $m$, where $R,m\leq (\log L)^{20}$. Suppose also that $\Mb$ \emph{does not contain any triple bond}. Then, $\Df(\Mb)$ is the union of at most $C^m$ subsets. Each subset has the form $\Df(\Mb,\mathtt{Ext})$, and there exists $0\leq r\leq m$, and a collection of at most $C(r+R)$ molecular chains of either type I or type II in $\Mb$, such that (i) the number of atoms not in one of these chains is at most $C(r+R)$, and (ii) for any type II chain in the collection and any two paired single bonds $(\ell_1,\ell_2)$ in this chain (see Figure \ref{fig:molechain}), the set $\mathtt{Ext}$ includes the condition $(k_{\ell_1}=k_{\ell_2})$. Moreover we have the estimate that  \begin{equation}\label{defect2}\sup\#\Df(\Mb,\mathtt{Ext})\leq (C^+)^m\delta^{-\frac{m+m_1}{2}}L^{(d-1)(m-R/2)-2\nu r},\end{equation} where $m_1$ is the number of atoms in the union of type I chains.
\end{prop}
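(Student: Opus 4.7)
The plan is to extend the counting framework of \cite{DH}, Section 9 (where the analogous statement is proved for couples, corresponding to $R=1$), to the mixed-garden setting, with the new input being Proposition \ref{basemol}. The overall architecture of the proof will mirror the couple case: first perform a combinatorial reduction of the molecule $\Mb$ into a collection of molecular chains plus a ``remainder'' set of few atoms, and then carry out the counting atom-by-atom (for the remainder) and chain-by-chain (for the chains).

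The first step is to construct the chain decomposition. Since the assumptions that $\Mb$ has no triple bond and comes from a garden are identical to those in \cite{DH}, I will invoke the structural lemmas from there (the analog of \cite{DH}, Proposition 9.11) to partition $\Mb$ into type I and type II chains whose complement has $O(r+R)$ atoms, where $r$ counts the degenerate atoms. The factor $R$ (instead of $1$) enters because each tree $\Tc_j$ contributes a root atom that is automatically ``boundary'' for the chain structure, so the residual set grows linearly in $R$. The $C^m$ bound on the number of subsets $\Df(\Mb,\mathtt{Ext})$ arises from enumerating markings in \cite{DH} and, in our case, additionally from the choice of degenerate atoms. On each type II chain the extra conditions $k_{\ell_1}=k_{\ell_2}$ are imposed as in \cite{DH}, which is what allows the chain to be counted efficiently.

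The core of the estimate is the counting bound \eqref{defect2}. For each non-chain atom, the pair of conservation laws \eqref{decmole} fixes all but $L^{d-1}$ vectors (with constant loss per degree of freedom); atoms in a type I chain contribute a factor $(C^+\delta^{-1/2}L^{d-1})$ apiece through the standard sum-over-chain estimate of \cite{DH} (Proposition 9.9), which accounts for the $\delta^{-(m+m_1)/2}$ weight (the $m$ coming from the overall $L^2$ defect in \eqref{decmole} and the $m_1$ from chain optimization); atoms in a type II chain likewise contribute $L^{d-1}$ apiece with the $k_{\ell_1}=k_{\ell_2}$ constraint built in. Crucially, an Euler-type accounting argument, exactly as in \cite{DH}, shows that the total power of $L^{d-1}$ in the final product is $\chi = E-V+F$ rather than $V$, since $V-F$ of the conservation laws are consumed in combining components. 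This is the point at which Proposition \ref{basemol} becomes decisive: it gives $\chi \leq m - R/2$, upgrading the couple-case exponent $(d-1)(m-1)$ to $(d-1)(m-R/2)$ and thereby producing the correct kinetic gain in the width $2R$ of the garden. Finally, each of the $r$ degenerate atoms gains a power $L^{-2\nu}$ either from the reduced codimension of the resonant set or, equivalently, from the extra algebraic relation among the $k_\ell$'s that degeneracy imposes, accounting for the $L^{-2\nu r}$ factor.

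The main technical obstacle will be the bookkeeping of Step 2 in the presence of the trivial trees (which appear as single unpaired nodes forming bonds to atoms of $\Mb$, lowering their degree below $4$ via the tame/$f_v$ mechanism of Definition \ref{defdecmol}) and of multiple roots in the case $R\geq 2$. One must carefully verify that the chain extraction still operates when several low-degree boundary atoms coexist, and that the contribution of the residual set to the Euler count is accurately absorbed into the $C(r+R)$ exceptional atoms without wasting the gain from Proposition \ref{basemol}. Once this bookkeeping is stable, all subsequent estimates are essentially quotations from \cite{DH}, with Proposition \ref{basemol} replacing the couple-case inequality $\chi\leq m-1$ at the unique step where the global topological bound is used.
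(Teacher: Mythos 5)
Your overall architecture is the paper's: run the counting algorithm of \cite{DH}, Sections 9.3--9.5, observe that the only change from the couple case is the initial bookkeeping (here $V_{<4}+F=O(R)$ rather than $O(1)$, because of the $2R$ roots), and substitute Proposition \ref{basemol} ($\chi\leq m-R/2$) for the couple-case bound $\chi\leq m-1$ at the single point where the global topological quantity enters the exponent. That part of the proposal is exactly right, and it is the genuinely new content of the proposition.

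However, there is a concrete error in your identification of the parameter $r$. You take $r$ to be the number of \emph{degenerate} atoms (those in $S$ in the sense of Definition \ref{defdecmol}) and claim both that the complement of the chains has $O(r+R)$ atoms and that the factor $L^{-2\nu r}$ in \eqref{defect2} is harvested ``one $L^{-2\nu}$ per degenerate atom.'' In the actual proof, $r$ is the number of \emph{good steps} performed by the algorithm of \cite{DH}, a purely structural quantity measuring how far $\Mb$ deviates from a disjoint union of type I/II chains; it is read off retrospectively together with the exponents $\kappa$ and $\gamma$, and one proves $\kappa=\frac{m+m_1}{2}+O(R+r)$ and $(d-1)(\gamma-\chi)\leq-2\nu r$. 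Degeneracy of an atom is a property of the coefficients $\epsilon_\Is$, independent of the bond topology, so a molecule can have \emph{no} degenerate atoms while having a large number of atoms outside every type I/II chain; with your choice of $r$, conclusion (i) of the proposition is then simply false, and the gain $L^{-2\nu r}$ is absent precisely where it is needed. This is not cosmetic: in the application (proof of Proposition \ref{mainprop}) the factor $L^{-2\nu r}$ must beat the entropy $(C(r+R))!$ of molecules whose chain-complement has $\sim r+R$ atoms, which is only possible if $r$ is tied to that structural complexity rather than to the resonance coefficients. The fix is to define $r$ as the number of good steps of the algorithm and to note, as the paper does, that since all monitored quantities ($V$, $E$, $F$, $V_j$, $V_2^*$, $\xi$) agree with the couple case up to $O(R)$ at initialization, every estimate of \cite{DH}, Section 9.5 carries over with $Cr$ replaced by $C(r+R)$.
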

\begin{rem}\label{countingrem} In view of Remark \ref{newres}, in Definition \ref{defdecmol} we may also fix some set $S^*$ of atoms such that neither (a) nor (b) is required for $v\in S^*$, but we are allowed to multiply the left hand side of (\ref{defect2}) by $L^{-40d\cdot|S^*|}$. In this way we can restate Proposition \ref{gain} appropriately, and the new result can be easily proved with little difference in the arguments, due to the large power gains. For simplicity we will not include this in the proof below.
\end{rem}
\begin{proof}[Proof of Proposition \ref{gain}] The proof is basically the same as the proof of Proposition 9.10 in \cite{DH}. We define the same steps as in Section 9.3 of \cite{DH}, including the good and normal steps, and apply the same algorithm as in Section 9.4 of \cite{DH}. Let the total number of good steps in the process be $r\geq 0$ (we may assume $r\leq m$ up to a constant because the total number of steps is at most $O(m)$), then we may repeat the proofs in Section 9.5 of \cite{DH}. The only difference here is the initial state of the molecule (as $\Mb$ is obtained from a mixed garden rather than a couple), but in the current case we still have $V_{<4}+F=O(R)$, where $V_{<4}$ and $F$ are the number of atoms with degree $<4$ and the number of connected components and the constant in $O$ depends only on $d$.

Note that in the proof of of Proposition 9.10 in \cite{DH}, the quantities that are monitored include $V$, $E$, $F$, $V_j$ for $0\leq j\leq 4$ (which is the number of atoms with degree $j$), $V_2^*$ (which is the number of degree $2$ atoms with two single bonds), and $\xi$ (which is the number of ``special bonds" connecting two degree $3$ atoms that have a special form, see Definition 9.12 of \cite{DH}). Since $V_{<4}+F=O(R)$, it is clear that in the beginning, the value of each of these quantities in the current case is the same as in \cite{DH}, up to errors  of size $O(R)$. Thus, the same proof as in \cite{DH} yields that $\Mb$ contains at most $C(R+r)$ type I or II molecular chains, such that the number of atoms not in one of these chains is at most $C(R+r)$. Moreover
\[\sup\#\Df(\Mb,\mathtt{Ext})\leq (C^+)^m\delta^{-\kappa}L^{(d-1)\gamma},\] where $\kappa$ and $\gamma$ are calculated retrospectively from the algorithm, as described in Section 9.2 of \cite{DH}. The calculation for $\kappa$ is the same up to $O(r)$ errors, so we have $\kappa=\frac{m+m_1}{2}$ up to errors $O(R+r)$ which are acceptable. To calculate $\gamma$, note that in \cite{DH} we are actually calculating $\gamma-\chi$, and the same proof yields that $(d-1)(\gamma-\chi)\leq -2\nu r$ for the initial molecule. Now by Proposition \ref{basemol} we know $\chi\leq m-R/2$, hence
\[\sup\#\Df(\Mb,\mathtt{Ext})\leq (C^+)^m\delta^{-\frac{m+m_1}{2}}L^{(d-1)(m-R/2)-2\nu r},\] as desired.
\end{proof}
\section{$L^1$ coefficient bounds}\label{l1bd} We now return to the study of the expression (\ref{irrechainsum3}). Let $\Gc_{sk}^\#$ and $(r_0,r_{\mathrm{irr}})$ be as in Section \ref{summarysec}. For simplicity, in this section we will write $\Gc_{sk}^\#$ simply as $\Gc$, and the associated sets $(\Nc_{sk}^\#)^*$ as $\Nc^*$ etc. Recall, by (\ref{typeIcontrol}), that the total length of the irregular chains in $\Gc$ is at most $C(r_0+r_{\mathrm{irr}})$. Let $\Xi$ be a subset of $\Nc^*$, we may define, as in (\ref{irrechainsum3}), the function
\begin{equation}\label{modifiedtimeint}\Uc_{\Gc}(t,s,\vsigma,\alpha[\Nc^*])=\int_{\widetilde{\Ic}}\prod_{\nf\in\Nc^*}e^{\pi i\alpha_\nf t_\nf}\,\mathrm{d}t_\nf,
\end{equation} where $\vsigma=\sigma[\Xi]\in[0,1]^\Xi$, and the domain $\widetilde{\Ic}$ is defined as in (\ref{timegarden}), but with the extra conditions $t_{\nf^p}>t_\nf+\sigma_\nf$ for $\nf\in\Xi$, where $\nf^p$ is the parent of $\nf$. Then, let $m_0'$ be the scale of $\Gc$, we can write
\begin{multline}\label{finalexp}(\ref{irrechainsum3})=(C^+\delta)^{\frac{m-m_0'}{2}}\bigg(\frac{\delta}{2L^{d-1}}\bigg)^{m_0'}\zeta^*(\Gc){\int_{\Rb^{\Nc^*}\times\Rb^2}} G(\vlambda)\,\mathrm{d}\vlambda\int_{[0,1]^\Xi}\mathrm{d}\vsigma\\\times\sum_{\Is}\epsilon_\Is\Uc_{\Gc}\big(t,s,\vsigma, (\delta L^2\zeta_\nf\Omega_\nf+\lambda_\nf)_{\nf\in\Nc^*}\big)\cdot \Xc_{\mathrm{tot}}(\vlambda,\vsigma,k[\Gc]).
\end{multline}

Let $\Mb$ be the molecule associated with $\Gc$ as in Definition \ref{gardenmol}. It is easy to see that $\Mb$ contains no triple bond, as triple bonds in $\Mb$ can only come from $(1,1)$-mini couples and mini trees (as in Definition \ref{defreg}) in $\Gc$. By the proofs in Section \ref{improvecount}, we can introduce at most $C^{m_0'}$  sets of extra conditions $\mathtt{Ext}$, such that the summation in $\Is=k[\Gc]$ in (\ref{finalexp}) can be decomposed into the summations with each of these sets of extra conditions imposed on $k[\Gc]$. Moreover, for each choice of $\mathtt{Ext}$ there is $1\leq r_1\leq n_0'$ such that the conclusion of Proposition \ref{gain}, including (\ref{defect2}), holds true (with $r$ replaced by $r_1$).

Notice that a type I chain in $\Mb$ can \emph{only} be obtained from either one irregular chain, or the union of two irregular chains in $\Gc$; for couples this can be proved in the same way as in Section 10.1.2 of \cite{DH} (which deals with type II chains), and the same proof works also for gardens. Therefore, the total length $p$ of type I chains in $\Mb$ is bounded by the total length of irregular chains in $\Gc$, which is at most $C(r_0+r_{\mathrm{irr}})$. However, each irregular chain in $\Gc$ also corresponds to a type I chain in the base molecule, so $r_{\mathrm{irr}}\leq C(r_1+R)$, hence $p\leq C(r+R)$, where $r=r_0+r_1$. This means the number of atoms in $\Mb$ that are not in one of the (at most $C(r+R)$) type II chains is at most $C(r+R)$.

Now, suppose $\nf$ and $\nf'$ are two branching nodes in $\Gc$ which correspond to two atoms in $\Mb$ \emph{that are connected by a double bond in a type II chain}, then we must have $\zeta_{\nf'}\Omega_{\nf'}=-\zeta_\nf\Omega_\nf$ under the extra conditions in $\mathtt{Ext}$, see Remark \ref{moledec}. In fact we will restrict $\{\nf,\nf'\}$ to the \emph{interior} of this type II chain by omitting $5$ pairs of atoms at both ends of the chain, in the same way as in Definition \ref{conggen}. Then, we make such $\{\nf,\nf'\}$ a pair, and choose one node from each such pair to form a set $\widetilde{\Nc}^{ch}$. If it happens that one of $\{\nf, \nf'\}$ is a parent of the other, we assume the parent belongs to $\widetilde \Nc^{ch}$. Let $\Nc^{rm}$ be the set of branching nodes not in these pairs, and define $\widetilde{\Nc}=\widetilde{\Nc}^{ch}\cup\Nc^{rm}$. 

We will be interested in estimates on the function $\Uc_\Gc$ in \eqref{modifiedtimeint} where $\alpha_\nf=\delta L^2\zeta_\nf\Omega_\nf+\lambda_\nf$, which means that $\alpha_{\nf}+\alpha_{\nf'}=\mu_\nf$ for each $\nf\in\widetilde{\Nc}^{ch}$, where $\nf'$ is the node paired to $\nf$ and $\mu_\nf=\lambda_\nf+\lambda_{\nf'}$ is a parameter depending on $\vlambda$. Under this assumption on $\alpha_{\nf}$, we can write
\begin{equation}\label{eqnuv}\Uc_{\Gc}(t,s,\vsigma,\alpha[\Nc^*])=\Vc_{\Gc}(t,s,\vsigma,\alpha[\widetilde{\Nc}])
\end{equation} for some function $\Vc_{\Gc}$. This function actually depends also on the parameters $\mu_\nf$ for $\nf\in\widetilde{\Nc}^{ch}$, but we will omit this for notational convenience. We then have prove the following:
\begin{prop}\label{section8main} {Suppose $\Gc$ has scale $m_0'$.} For each $\nf\in\widetilde{\Nc}$, suppose $S_\nf\subset\Zb$ and $\#S_\nf\leq L^{10d}$. Then, uniformly in $(t,s)$, in the choices of $(S_\nf)_{\nf\in\widetilde{\Nc}}$, and in the parameters $(\mu_\nf)_{\nf\in\widetilde{\Nc}^{ch}}$, we have
	\begin{equation}\label{uniformL1}\delta^{m_0'/4}\cdot\sum_{(m_\nf):m_\nf\in S_\nf}\sup_{(\alpha_\nf):|\alpha_\nf-m_\nf|\leq 1}\sup_{\vsigma}\big|\Vc_{\Gc}(t,s,\vsigma,\alpha[\widetilde{\Nc}])\big|\leq (C^+)^{m_0'}L^{C(r+R)\sqrt{\delta}}(\log L)^{C(r+R)},
	\end{equation} where $r=r_0+r_1$.
\end{prop}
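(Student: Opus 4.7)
The plan is to adapt the $L^1$-type estimates from Section 10 of \cite{DH}, which handle the analogous bound for couples, to the garden setting. First I would express $\Vc_{\Gc}$ as an iterated time integral over the tree structure of $\Gc$, carrying out the integrations from leaves to roots using the simplex structure of $\widetilde{\Ic}$. Each one-dimensional integration $\int e^{\pi i \alpha_\nf t_\nf}\,\mathrm{d}t_\nf$ over its relevant sub-simplex produces a factor essentially of size $\min(1, |\alpha_\nf|^{-1})$, modulo boundary contributions from the simplex walls which propagate to neighboring integrations and can be bookkept by tracking endpoint values.

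Second, I would exploit the type II chain structure. For each pair $\{\nf,\nf'\}$ whose representative lies in $\widetilde{\Nc}^{ch}$, the defining constraint $\alpha_\nf+\alpha_{\nf'}=\mu_\nf$ allows a change of variables $\tau = t_{\nf'}-t_\nf$ that removes one oscillatory direction via the identity $e^{\pi i \alpha_\nf t_\nf}e^{\pi i \alpha_{\nf'}t_{\nf'}} = e^{\pi i \mu_\nf t_\nf}e^{\pi i \alpha_{\nf'}\tau}$. Iterating this cancellation along the interior of each type II chain would yield a telescoping estimate that avoids the logarithmic loss per interior node and produces the bulk factors $(C^+)^{m_0'}\delta^{-m_0'/4}$ on the right hand side of \eqref{uniformL1}, with the $\delta^{-1/4}$ per interior node balancing the $\delta^{m_0'/4}$ prefactor.

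Third, for the discrete supremum-sum $\sum_{m_\nf\in S_\nf}\sup_{|\alpha_\nf-m_\nf|\leq 1}$, I would treat the two categories of variables separately. For each of the at most $C(r+R)$ boundary nodes of type II chains together with the free nodes in $\Nc^{rm}$, the bound $\sum_m \min(1,|m|^{-1}) \lesssim \log L$ over a set of cardinality at most $L^{10d}$ contributes a factor of $O(\log L)$, accounting for $(\log L)^{C(r+R)}$ in total. For the remaining interior nodes, the telescoping from the second step supplies only a constant per node, with no logarithmic loss.

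The hardest step should be correctly tracking the telescoping across type II chains whose paired nodes are not immediately adjacent in the garden (the ``long range'' case handled by a delicate case analysis in \cite{DH}), and ensuring that the larger width $2R$ rather than $2$ does not introduce extra $R$-dependent losses beyond those absorbed into $(r+R)$. Since a garden is locally identical to a couple at each branching node, the case analysis from \cite{DH} should transfer with only bookkeeping modifications, with the $R$-dependence entering only through the count of chain boundaries and free nodes. The small additional factor $L^{C(r+R)\sqrt{\delta}}$ presumably arises from endpoint corrections at chain boundaries and from resonance-type adjustments needed to close the telescoping; these would be propagated through the estimate using the $\sqrt{\delta}$ decay from Proposition \ref{propreg} at each such boundary.
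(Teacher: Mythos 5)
Your outline follows essentially the same route as the paper, which proves this proposition by porting the induction of Proposition 10.1 of \cite{DH} verbatim to gardens: the points you identify --- that the constraint $\alpha_\nf+\alpha_{\nf'}=\mu_\nf$ along type II chain interiors prevents a per-node logarithmic loss, that only the $C(r+R)$ chain-boundary and leftover nodes each cost a $\log L$ from $\sum_{m\in S_\nf}\min(1,\langle m\rangle^{-1})$ over $\#S_\nf\leq L^{10d}$, and that each step of the argument is local to at most two trees so the width $2R$ enters only through the count $C(r+R)$ --- are exactly the observations the paper relies on. One correction: the factor $L^{C(r+R)\sqrt{\delta}}$ has nothing to do with Proposition \ref{propreg}; by the time one reaches $\Vc_{\Gc}$ the regular couples and trees have already been absorbed into $\Xc_{\mathrm{tot}}$ via their Fourier expansions, and $\Vc_{\Gc}$ is a pure oscillatory time integral, so no ``$\sqrt{\delta}$ decay'' from that proposition is available. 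The $L^{C(r+R)\sqrt{\delta}}$ loss instead arises from elementary sums of the form $\sum_{|m|\leq L^{10d}}\langle m\rangle^{-1+\sqrt{\delta}}\lesssim \delta^{-1/2}L^{10d\sqrt{\delta}}$ occurring at the $O(r+R)$ non-chain steps of the induction in \cite{DH}.
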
	
\begin{proof} The proof is exactly the same as the proof of Proposition 10.1 in \cite{DH}, except that all the ``couples" should be replaced by ``gardens". The reason is that, the proof in \cite{DH} goes by induction; moreover in each inductive step we remove either two branching nodes or one chain containing modules A and B (see Sections 10.1--10.2 of \cite{DH}). In either case this step involves at most two trees and the other trees are not affected, so the proof is the same for couples and for general gardens. 

In the proof in \cite{DH} we have also introduced the simpler structures (for the purpose of induction) of unsigned couples and double-trees, which are naturally replaced by unsigned gardens and multi-trees (the collection of $2R$ trees with some branching nodes paired, compared to two trees in \cite{DH}). The rest of the proof is exactly the same. Note also that the exponents $Cr$ in Proposition 10.1 of \cite{DH} are replaced by $C(r+R)$ as the number of type II chains in $\Mb$, as well as the number of atoms not in one of these chains, is now $C(r+R)$ instead of $Cr$ due to Proposition \ref{gain}.
\end{proof}
\section{Proof of Theorem \ref{main}} \label{sec:conclusion}In this section we prove Propositions \ref{mainprop} and \ref{mainprop2}, which completes the proof of Theorem \ref{main}. 
\subsection{Proof of Proposition \ref{mainprop}} Note that, if $\Gc$ and $\Gc'$ are congruent in the sense of Definition \ref{conggen}, then they have the same width, same signature, and the same scale for each of their component trees; moreover $\Gc'$ is mixed if and only if $\Gc$ is mixed. Thus, the sum in (\ref{mixedsum}) can be decomposed into different terms, where each term has the form (\ref{irrechainsum}) for one congruence class $\Fs$.

For any fixed $\Fs$, consider (\ref{irrechainsum}), which then equals (\ref{irrechainsum3}) and (\ref{finalexp}). Note that in (\ref{finalexp}) the $\Gc$ actually means $\Gc_{sk}^\#$ by our notation. Using the decay factors in (\ref{xtotbound}) we can restrict to the subset where $|k_\lf-a_\lf|\leq 1\,(\forall\lf\in(\Lc_{sk}^\#)^*)$ for some fixed parameters $(a_\lf)$, with summability in $(a_\lf)$ guaranteed. Using the first inequality in (\ref{tensorbd}), we may also fix the value of $\vlambda$ (and hence $\mu_\nf$).

As in Section \ref{improvecount}, by decomposing into at most $C^{m_0'}$ terms (where $m_0'$ is the scale of $\Gc_{sk}^\#$), we can add the set of extra conditions $\mathtt{Ext}$, which also defines the sets $\widetilde{\Nc}$ (as in Section \ref{l1bd}), etc., and the value $r_1\geq 1$. Let $r=r_0+r_1$ as above, then thanks to $\mathtt{Ext}$, we can use (\ref{eqnuv}) to reduce $\Uc_{\Gc_{sk}^\#}$ to $\Vc_{\Gc_{sk}^\#}$. Moreover, for each $\nf\in\widetilde{\Nc}$, the value $\delta L^2\zeta_\nf\Omega_\nf+\lambda_\nf$ belongs to some subset of $\Rb$ of cardinality at most $L^{3d}$, as $k[\Gc_{sk}^\#]$ varies (this is because each $k_\nf$ belongs to a ball of radius at most $n\leq (\log L)^{20}$ under our assumptions). In particular the value $m_\nf=\lfloor \delta L^2\zeta_\nf\Omega_\nf+\lambda_\nf\rfloor$ belongs to a set $S_\nf\subset\Zb$ with cardinality at most $L^{3d}$, for all possible choices of $k[\Gc_{sk}^\#]$.

To estimate (\ref{finalexp}) with $\vlambda$ fixed, we first integrate in $\vsigma$. Using (\ref{xtotbound}), we can estimate (\ref{finalexp}) using
\begin{equation}\label{finalexp2}\sum_{\Is_{sk}^\#}|\epsilon_{\Is_{sk}^\#}|\cdot\sup_{\vsigma}\big|\Vc_{\Gc_{sk}^\#}\big(t,s,\vsigma, (\delta L^2\zeta_\nf\Omega_\nf+\lambda_\nf)_{\nf\in\widetilde{\Nc}}\big)\big|,
\end{equation} where $\Is_{sk}^\#=k[\Gc_{sk}^\#]$ is a $k$-decoration of $\Gc_{sk}^\#$ {(we also have additional factors that will be collected at the end)}. We next fix the values of $m_\nf\in S_\nf$ for each $\nf$; note that then
\[\sup_{\vsigma}\big|\Vc_{\Gc_{sk}^\#}\big(t,s,\vsigma, (\delta L^2\zeta_\nf\Omega_\nf+\lambda_\nf)_{\nf\in\widetilde{\Nc}}\big)\big|\leq\sup_{(\alpha_\nf):|\alpha_\nf-m_\nf|\leq 1}\sup_{\vsigma}\big|\Vc_{\Gc_{sk}^\#}(t,s,\vsigma,\alpha[\widetilde{\Nc}])\big|\] by definition, so if we use (\ref{uniformL1}) to sum over $(m_\nf)$ in the end, we can further estimate (\ref{finalexp2}) using
\begin{equation}\label{finalexp3}\sum_{\Is_{sk}^\#}|\epsilon_{\Is_{sk}^\#}|\cdot\prod_\lf\mathbf{1}_{|k_\lf-a_\lf|\leq 1}\prod_\nf\mathbf{1}_{|\Omega_\nf-b_\nf|\leq \delta ^{-1}L^{-2}},
\end{equation} where $a_\lf$ and $b_\nf$ are constants, and we also include the conditions in $\mathtt{Ext}$. {Now (\ref{finalexp3}) is almost exactly the counting problem $\Df(\Mb,\mathtt{Ext})$ stated in Definition \ref{defdecmol}, due to Remark \ref{moledec}, except that we only assume $|k_\lf-a_\lf|\leq 1$ for \emph{leaves} $\lf$. However, for any branching node $\nf$ there exists a child $\nf'$ of $\nf$ such that $k_\nf\pm k_{\nf'}$ belongs to a fixed ball of radius $\mu_\nf^\circ$ (with $\mu_\nf^\circ$ defined Lemma \ref{auxlem}), so by using (\ref{auxineq}), one can reduce (\ref{finalexp3}) to at most $C^{m_0'}$ counting problems, each of which having exactly the same form as $\Df(\Mb,\mathtt{Ext})$ in Definition \ref{defdecmol}.} Therefore, (\ref{finalexp3}) can be bounded using Proposition \ref{gain} {(and using Remark \ref{countingrem} if necessary)}. Collecting all the factors appearing in the above estimates, we get that
\begin{multline}\label{finalest1}|(\ref{irrechainsum})|\leq (C^+)^m\delta^{(m-m_0')/2}\delta^{3m_0'/4} L^{-(d-1)m_0'} L^{-2\nu r _0}\\\times L^{C(r+R)\sqrt{\delta}}(\log L)^{C(r+R)}\delta^{-(m_0'+q)/2} L^{(d-1)(m_0'-R/2)-2\nu r_1},
\end{multline} which is then bounded by $(C^+\delta^{1/4})^{m} L^{-3\nu (r+R)/2}\delta^{-q/2}$, where $q$ is the total length of type I chains in the molecule obtained from $\Qc_{sk}^\#$. We know $q\leq C(r+R)$ so $\delta^{-q/2}\leq L^{\nu (r+R)/2}$, which implies that
\begin{equation}\label{finalbound}|(\ref{irrechainsum})|\leq (C^+\delta^{1/4})^{m} L^{-\nu(r+R)}.
\end{equation}

Finally, suppose we fix $r$, then the molecule associated with $\Gc_{sk}^\#$ (see Definition \ref{gardenmol}) is, up to at most $C(r+R)$ remaining atoms, a union of at most $C(r+R)$ type II chains with total length at most $m_0'$. This clearly has at most $(C(r+R))!C^m$ possibilities. By Proposition \ref{reconstruction}, the number of choices for $\Gc_{sk}^\#$ is also at most $(C(r+R))!C^m$. To form $\Gc_{sk}$ from $\Gc_{sk}^\#$ one needs to insert at most $C(r+R)$ irregular chains with total length at most $m$, which also has at most $C^m$ possibilities. Finally, using Corollary \ref{skeletonrecover}, we see that $\Gc$ has at most $(C(r+R))!C^m$ choices. The number of choices for markings, as well as $\mathtt{Ext}$, are also at most $C^m$ and can be accommodated. This means that, if we decompose (\ref{mixedsum}) into terms of form (\ref{irrechainsum}), then further decompose by markings and/or $\mathtt{Ext}$ etc., then each of the resulting term has an index $r\geq 0$, such that each term of index $r$ is bounded by $(C^+\delta^{1/4})^{m} L^{-\nu(r+R)}$, see (\ref{finalbound}), and that the number of terms with index $r$ is at most $(C(r+R))!C^m$. Therefore
\begin{equation}(\ref{mixedsum})\leq\sum_{r=0}^m (C^+\delta^{1/4})^{m} L^{-\nu(r+R)}\cdot (C(r+R))!C^m\leq  (C^+\delta^{1/4})^{m}L^{-\nu R},
\end{equation} because in any case $r+R$ is bounded by a power of $\log L$, which is $\ll L^\nu$. This completes the proof of Proposition \ref{mainprop}.
\subsection{Proof of Proposition \ref{mainprop2}}\label{remterm} The proof is almost identical with the corresponding proofs in \cite{DH}, which we briefly present here.

First, by Chebyshev's inequality, to prove (\ref{largedevest}) it suffices to show that
\begin{equation}\label{largedevest2}\mathbb{E}\big|\sup_{k,t}\langle k\rangle^{9d}(\Jc_n)_k(t)\big|^2\leq (C^+\sqrt \delta)^{n}L^{100d}
\end{equation} and
\begin{equation}\label{largedevest3}\mathbb{E}\big|\sup_{k,t}\langle  k\rangle^{9d}\Rc_k(t)\big|^2\leq(C^+\sqrt{\delta})^NL^{100d}.
\end{equation} Note that due to our choice $N=\lfloor(\log L)^4\rfloor$ instead of $N=\lfloor\log L\rfloor$, the proof of (\ref{largedevest}) is conceptually easier than \cite{DH} as we do not need the hypercontractivity property (Lemma A.3 of \cite{DH}) or the higher moment estimates.

Now, to prove (\ref{largedevest2}), we argue as in the proof of Proposition 12.1 of \cite{DH} (but with $p$ replaced by $2$), and apply Gagliardo-Nirenberg to bound the left hand side of (\ref{largedevest2}), up to a multiple $L^{10d}$, by
\[\sup_{k,t}\langle k\rangle^{20d}\big(\Eb|(\Jc_n)_k(t)|^2+\Eb|\partial_t(\Jc_n)_k(t)|^2\big),\] which is then bounded by $(C^+\sqrt{\delta})^nL^{40d}$ in the same way as \cite{DH}. In fact, the bound for $\Eb|(\Jc_n)_k(t)|^2$ is as in Proposition 2.5 of \cite{DH} (again our choice $N=\lfloor(\log L)^4\rfloor$ here does not affect the proof), while the bound for $\Eb|\partial_t(\Jc_n)_k(t)|^2$ is as in (12.4) of \cite{DH}, which is proved by similar arguments. This settles (\ref{largedevest2}). The proof of (\ref{largedevest3}) is the same, except that $\Jc_n$ is replaced by $\Rc$ and $n$ is replaced by $N$.

\medskip
Finally, to prove (\ref{operatordev}), again by Chebyshev's inequality, it suffices to show that the kernel $(\Ls^n)_{k\ell}^{\zeta}(t,s)$ of the $\Rb$-linear operator $\Ls^n$ (with $\zeta\in\{\pm\}$ indicating the linear and conjugate linear parts) can be decomposed as 
\[(\Ls^n)_{k\ell}^{\zeta}(t,s)=\sum_{n\leq m\leq N^3}(\Ls^n)_{k\ell}^{m,\zeta}(t,s)\] and that for each $n\leq m\leq N^3$, the kernel $(\Ls^n)_{k\ell}^{m,\zeta}(t,s)$ satisfies that
\begin{equation}\label{largedevest4}\Eb\big|\sup_{k,\ell}\sup_{0\leq s<t\leq 1}\langle k-\ell\rangle^{9d}(\Ls^n)_{k\ell}^{m,\zeta}(t,s)\big|^2\leq (C^+\sqrt{\delta})^mL^{100d}.\end{equation} Now the decomposition is provided as in Proposition 11.2 of \cite{DH}, and (\ref{largedevest4}) is proved as in the proof of Proposition 12.2 of \cite{DH} (in particular this proof does require the hypercontractivity property). Note that in that proof, we actually further decompose $(\Ls^n)_{k\ell}^{m,\zeta}(t,s)$ into $(\Ls^n)_{M,k\ell}^{m,\zeta}(t,s)$ for dyadic $M$, and proves (\ref{largedevest4}) for $(\Ls^n)_{M,k\ell}^{m,\zeta}$ with the right hand side involving a negative power of $M$; see (12.10) of \cite{DH}. Both proofs carry over to the current case with our choice $N=\lfloor(\log L)^4\rfloor$ with out any change, which then proves (\ref{operatordev}) and completes the proof of Proposition \ref{mainprop2}.
\section{The non-Gaussian case}\label{sec:nongaussian} In this section we briefly discuss the non-Gaussian case, i.e. Theorem \ref{main2}, which we prove in Sections \ref{subst}--\ref{overgar} (Theorem \ref{main3} basically follows from it and is proved separately in Section \ref{evodens}). Since much of the proof will be identical with Theorem \ref{main}, we will only elaborate on the parts where the proofs are different.

First, in the Gaussian case our proof yields uniform estimates as long as $R\leq \log L$ (or $R\leq 2\log L$); here we will make slightly stronger assumptions $R\leq\log L/(\log\log L)^2$. Again we may consider $2R$ at some places, but it does not affect the result.

Next, using the expansion (\ref{expand}), we can reduce the proof of Theorem \ref{main2} to analyzing the correlations \begin{equation}\label{corrnew}\Eb\bigg(\prod_{j=1}^{2R}(\Jc_{\Tc_j})_{k_j}^{\zeta_j}(t)\bigg).\end{equation} The rest of the proof, including the treatment of the remainder term $b$, can be done in the same way as with these correlations, see Section \ref{remterm}. Specifically, the Gaussian hypercontractivity inequality, which is used in Section \ref{remterm}, can be substituted by similar inequalities for the current density function thanks to the assumed bound on $\mu_r \leq (Cr)!$; an instance of such argument can be found in Lemma 3.1 of \cite{DH0} which treats the particular case of the uniform distribution on the unit circle, but the general case can be treated in the same manner. Therefore, below we will focus on the study of (\ref{corrnew}).
\subsection{A substitute for Isserlis' theorem}\label{subst}
The obvious difference in the study of (\ref{corrnew}) in the non-Gaussian case is that the Isserlis' theorem is not available. Instead we have the following substitute:
\begin{lem}\label{isserlisnew} Recall all the random variables $\eta_k$ are i.i.d. with radial law, and $\Eb|\eta_k|^{2r}=\mu_r$ with $\mu_1=1$ and $\mu_r\leq (Cr)!$ for a positive integer $C$. Then, for any $k_j\in\Zb_L^d\,(1\leq j\leq n)$ and $\zeta_j\in\{\pm\}\,(1\leq j\leq n)$ we have
\begin{equation}\label{isserlis2}\Eb\bigg(\prod_{j=1}^n\eta_{k_j}^{\zeta_j}\bigg)=\sum_{\Oc}\lambda(\Oc)\cdot\sum_{\Os}\prod_{A\in\Os}\prod_{j,j'\in A}\mathbf{1}_{k_j=k_{j'}}.
\end{equation} 

Here in (\ref{isserlis2}), $\Oc$ runs over all partitions of $n$ into \emph{even} positive integers (in particular if $n$ is odd then the right hand side of (\ref{isserlis2}) is zero). For fixed $\Oc$, the $\Os$ runs over all \emph{over-pairings} of the set $\{1,\cdots,n\}$ subordinate to $\Oc$ (we use the notation $\Os\models\Oc$), which are partitions of $\{1,\cdots,n\}$ such that the cardinalities of the subsets exactly form the partition $\Oc$, and for each subset $A$ exactly half of the signs $\zeta_j\,(j\in A)$ are $+$ and half are $-$.

The coefficients $\lambda(\Oc)$ are constants depending only on $\Oc$ (and $n$), and $\lambda(2,\cdots,2)=1$; in general we have $\lambda(2,\cdots,2,2a_1,\cdots,2a_r)=\lambda(2a_1,\cdots,2a_r)$. Moreover, let $q$ be the sum of the elements in $\Oc$ that are \emph{at least $4$}, then we have $|\lambda(\Oc)|\leq C_1^nn^{C_1q}$ for some constant $C_1\gg C$.
\end{lem}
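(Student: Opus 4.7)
The identity \eqref{isserlis2} will be reduced, via the independence of $\{\eta_k\}$ and their rotational symmetry, to a combinatorial identity for single-variable moments, which is then solved by moment--cumulant inversion on the partition lattice. The quantitative bound on $\lambda(\Oc)$ will follow from the hypothesis $\mu_r\leq(C_0r)!$ and the explicit Möbius formula for cumulants.

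Let $\Pi(k)$ denote the partition of $\{1,\dots,n\}$ whose blocks $B_1,\dots,B_r$ are the equality classes of $j\mapsto k_j$. By independence and rotation symmetry,
\[
\Eb\prod_{j=1}^n\eta_{k_j}^{\zeta_j}=\prod_{i=1}^r\Eb\Bigl(\prod_{j\in B_i}\eta^{\zeta_j}\Bigr),
\]
with the $i$-th factor equal to $\mu_{p_i}$ when $B_i$ has $p_i$ pluses and $p_i$ minuses (so $|B_i|=2p_i$), and zero otherwise. The right-hand side of \eqref{isserlis2} vanishes in the complementary case for the same reason, since an over-pairing $\Os$ satisfies $\prod_{A\in\Os}\prod_{j,j'\in A}\mathbf{1}_{k_j=k_{j'}}=1$ exactly when $\Os$ refines $\Pi(k)$, which requires each block of $\Pi(k)$ to itself be partitionable into sign-balanced even subsets. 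So it suffices to treat the case when all blocks of $\Pi(k)$ are even and sign-balanced.

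I would then introduce constants $c_a\in\Rb$, $a\geq 1$, defined recursively so that for every $p\geq 1$,
\[
\mu_p=\sum_{\Oc\vdash 2p\,(\text{even parts})} N_p(\Oc)\prod_{2a\in\Oc}c_a,
\]
where $N_p(\Oc)$ is the number of over-pairings of a single sign-balanced block of size $2p$ with block-size profile $\Oc$. This uniquely determines $c_a$ by induction on $a$ (since $N_a((2a))=1$), and forces $c_1=\mu_1=1$; it is nothing but the classical moment--cumulant inversion for the pair $(\eta,\bar\eta)$, with rotation symmetry killing the unbalanced joint cumulants. Multiplying the single-block identity over $i$ and reorganizing $\prod_i\mu_{p_i}$ by the joint over-pairing $\Os=\bigsqcup_i\Os_i\leq\Pi(k)$ and its type $\Oc$ then yields
\[
\prod_i\mu_{p_i}=\sum_{\Oc}\Bigl(\prod_{2a\in\Oc}c_a\Bigr)\#\{\Os\models\Oc:\Os\leq\Pi(k)\},
\]
which is \eqref{isserlis2} with the identification $\lambda(\Oc):=\prod_{2a\in\Oc}c_a$. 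The identities $\lambda(2,\dots,2)=1$ and $\lambda(2,\dots,2,2a_1,\dots,2a_r)=\lambda(2a_1,\dots,2a_r)$ are then immediate from $c_1=1$.

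For the bound on $|\lambda(\Oc)|$, I would invert the moment--cumulant relation via Möbius inversion on the partition lattice,
\[
c_a=\sum_\sigma(-1)^{|\sigma|-1}(|\sigma|-1)!\prod_{B\in\sigma}\mu_{|B|/2},
\]
summed over sign-balanced set partitions of $[2a]$. The hypothesis $\mu_r\leq(C_0r)!$, the elementary estimate $\prod_B(C_0|B|/2)!\leq(C'a)!$, and the Bell-number bound on the number of partitions together yield $|c_a|\leq(\tilde Ca)!$ for some $\tilde C$ depending only on $C_0$. For $\Oc=(2,\dots,2,2a_1,\dots,2a_s)$ with $a_i\geq 2$, we have $q=2\sum_i a_i$ and $a_i\leq q\leq n$, so using $m!\leq m^m$,
\[
|\lambda(\Oc)|\leq\prod_i(\tilde Ca_i)^{\tilde Ca_i}\leq(\tilde Cn)^{\tilde Cq/2}\leq C_1^n n^{C_1q}
\]
for $C_1$ large enough. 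The main conceptual point — and the reason this sharper bound is available — is that $c_1=1$ prevents any size-$2$ blocks of $\Oc$ from contributing to the exponent of $n$, so the complexity of the coefficient is measured entirely by the non-pair part $q$ of $\Oc$, which is precisely what is needed when summing over over-gardens in the non-Gaussian case.
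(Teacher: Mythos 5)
Your proof is correct, and it takes a genuinely different route from the paper's. The paper does not factor $\lambda(\Oc)$ through single-block cumulants: it defines $\lambda(\Oc)$ directly by a recurrence on the refinement poset of integer partitions, $\prod_{2b\in\Oc}\mu_b=\sum_{\Oc'\preceq\Oc}\xi_{\Oc,\Oc'}\lambda(\Oc')$, where $\xi_{\Oc,\Oc'}$ counts subordinate refinements of a fixed over-pairing; it then verifies \eqref{isserlis2} by evaluating both sides on an arbitrary configuration $(k_j)$, and proves the quantitative bound by induction on that recurrence, via the intermediate estimate $|\lambda(\Oc)|\leq \frac{(n/2)!}{(|\Oc|)!}\prod_{2b\in\Oc}(C_0b)!$, whose inductive step requires a nontrivial convexity inequality for $G(a)=(C_0a)!/(a!)^2$. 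Your multiplicative ansatz $\lambda(\Oc)=\prod_{2a\in\Oc}c_a$ in fact satisfies the paper's recurrence (expand $\prod_{2b\in\Oc}\mu_b$ block by block and note that $\xi_{\Oc,\Oc'}=\sum\prod_b N_b(\Pc_b)$ over distributions of the parts of $\Oc'$ among the blocks of $\Oc$), and since that recurrence determines $\lambda$ uniquely, your coefficients coincide with the paper's---worth noting, because the recurrence itself is invoked again in the proof of Theorem \ref{main2}. What your route buys is a structural explanation of the identity $\lambda(2,\cdots,2,2a_1,\cdots,2a_r)=\lambda(2a_1,\cdots,2a_r)$ (it is just $c_1=1$) and a much softer proof of the bound: the explicit M\"obius formula plus a Bell-number count gives $|c_a|\leq(\tilde Ca)!$ with no induction, and the final estimate $|\lambda(\Oc)|\leq(\tilde Cn)^{\tilde Cq/2}\leq C_1^nn^{C_1q}$ follows from $\sum_i a_i=q/2$ and $a_i\leq n$. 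The price is a larger (but still admissible) constant $C_1$; the paper's inductive bound is sharper but harder. Both arguments correctly isolate the key point that only the parts of $\Oc$ of size at least $4$ contribute to the exponent of $n$.
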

\begin{proof} We may assume $n$ is even and half the signs $\zeta_j\,(1\leq j\leq n)$ are $+$ and half are $-$, since otherwise both sides of (\ref{isserlis2}) are zero. Denote by $|\Oc|$ the number of elements in $\Oc$  (counted with multiplicity). For two partitions $\Oc$ and $\Oc'$, we write $\Oc'\preceq\Oc$, if $\Oc'$ can be formed by further partitioning some elements in $\Oc$ into even integers (also define $\succeq$ and $\prec$ etc. accordingly). Similarly for set partitions $\Os$ and $\Os'$, we write $\Os'\preceq\Os$ if $\Os'$ can be formed by further partitioning some subsets in $\Os$ (still keeping half of the signs $+$ and half $-$ in each subset).

Now, for $\Oc'\preceq \Oc$, we define $\xi_{\Oc,\Oc'}$ as follows: given a partition $\Os\models\Oc$, consider the number of partitions $\Os'\preceq\Os$ such that $\Os'\models\Oc'$. The number of choices for $\Os'$ is independent of the choice of $\Os$, and we define it to be $\xi_{\Oc,\Oc'}$. Obviously $\xi_{\Oc,\Oc}=1$. We define the coefficients $\lambda(\Oc)$ for each $\Oc$, such that they satisfy the following recurrence relation: first $\lambda(2,\cdots,2)=1$, and for each $\Oc$, we have
 \begin{equation}
 \label{recur}
 \prod_{2b\in\Oc}\mu_b=\sum_{\Oc'\preceq\Oc}\xi_{\Oc,\Oc'}\cdot\lambda(\Oc').
 \end{equation} Here the product is taken over all elements $2b$ appearing in $\Oc$, counted with multiplicity. Clearly the values of of $\lambda(\Oc)$ for each $\Oc$ are uniquely determined by (\ref{recur}). To prove $\lambda(2,\cdots,2,2a_1,\cdots,2a_r)=\lambda(2a_1,\cdots,2a_r)$, we simply notice $\mu_1=1$ and that if $\Oc$ contains a certain number of terms $2$, then any $\Oc'\preceq\Oc$ must contain at least the same number of $2$'s. Then we may proceed inductively using (\ref{recur}).
 
 Next we prove (\ref{isserlis2}) with $\lambda(\Oc)$ defined by (\ref{recur}). Assume all different values of these $k_j$ are $m_i\,(1\leq i\leq r)$, where for each $j$ there are $a_i$ copies of $m_i$ with corresponding $\zeta_j=+$, and $b_j$ copies with $\zeta_j=-$. We may assume $a_i=b_i$ (otherwise it is easy to check that both sides of (\ref{isserlis2}) are zero), hence the left hand side of (\ref{isserlis2}) is reduced to
 \[\Eb\bigg(\prod_{i=1}^r|\eta_{m_i}|^{2a_i}\bigg)=\prod_{j=1}^r\mu_{a_i}.\] Note also that $\Oc_*=(2a_1,\cdots,2a_r)$ is a partition of $n$ and the sets $\{j:k_j=m_i\}$ form a partition $\Os_*\models\Oc_*$. Moreover, on the right hand side of (\ref{isserlis2}), in order for the product $\prod_{A\in\Os}\prod_{j,j'\in A}\mathbf{1}_{k_j=k_{j'}}$ to be nonzero, one must have $\Oc\preceq\Oc_*$ and $\Os\preceq\Os_*$, and in this case this product equals $1$. Thus the right hand side of (\ref{isserlis2}) equals
 \[\sum_{\Oc\preceq\Oc_*}\lambda(\Oc)\sum_{\Os\preceq\Os_*,\Os\models\Oc}1=\sum_{\Oc\preceq\Oc_*}\xi_{\Oc_*,\Oc}\cdot\lambda(\Oc)=\prod_{i=1}^r\mu_{a_i}\] using the definition of $\xi_{\Oc_*,\Oc}$ and (\ref{recur}), as desired.
 
Next we prove that 
 \begin{equation}\label{induct}|\lambda(\Oc)|\leq \frac{(n/2)!}{(|\Oc|)!}\prod_{2b\in\Oc}(C_0b)!
 \end{equation} for $C_0=C+40$. The base case $\Oc=(2,\cdots,2)$ is clear. By induction, and using that $\mu_r\leq (Cr)!$, we only need to prove that for any $\Oc$, we have
 \begin{equation}\label{inductstep}\sum_{\Oc'\prec\Oc}\xi_{\Oc,\Oc'}\frac{(|\Oc|)!}{(|\Oc'|)!}\frac{\prod_{2b\in\Oc'}(C_0b)!}{\prod_{2b\in\Oc}(C_0b)!}\leq\frac{1}{2}.
 \end{equation} Now, fix a partition $\Os$ of $\{1,\cdots,n\}$ subordinate to $\Oc$. To construct $\Os'$, we first fix a partition of each element of $\Oc$ into even positive integers, such that the terms in these partitions exactly constitute $\Oc'$. Let the number of choices for these partitions be $\eta_{\Oc,\Oc'}$. Once these partitions are fixed, we have that
 \begin{equation}\label{upperbd}\mathrm{Number\ of\ choices \ for\ }\Os'\mathrm{\ is\ }\leq\bigg(\frac{\prod_{2b\in\Oc}b!}{\prod_{2b\in\Oc'}b!}\bigg)^2\quad \Rightarrow\quad\xi_{\Oc,\Oc'}\leq \bigg(\frac{\prod_{2b\in\Oc}b!}{\prod_{2b\in\Oc'}b!}\bigg)^2\cdot\eta_{\Oc,\Oc'}.\end{equation} In fact, consider any subset $A\in\Os$, say $|A|=2a$, which is partitioned $a=b_1+\cdots +b_q$ as described above. To divide $A$ into subsets of cardinalities $2b_j\,(1\leq j\leq q)$ to form part of $\Os'$ (we may call this part $\Os_A$), we need to divide the set of elements with $+$ sign and the set of elements with $-$ sign separately, leading to at most $(a!)^2/((b_1)!\cdots (b_q)!)^2$ choices, considering also that there may be repetitions due to symmetry. Applying this for each $A$, we get the upper bound (\ref{upperbd}).
 
We write $\Oc=(2a_1,\cdots,2a_r)$, and define $G(a)=(C_0a)!/(a!)^2$. Using (\ref{upperbd}), we can bound the left hand side of (\ref{inductstep}) by
\[\sum_{\Oc'\prec\Oc}\eta_{\Oc,\Oc'}\frac{r!}{(|\Oc'|)!}\frac{\prod_{2b\in\Oc'}G(b)}{G(a_1)\cdots G(a_r)}.\] Using the definition of $\eta_{\Oc,\Oc'}$, we can further reduce this to
\[\sum_{(\Pc_1,\cdots,\Pc_r)}\frac{r!}{(|\Pc_1|+\cdots +|\Pc_r|)!}\prod_{i=1}^r\frac{1}{G(a_i)}\prod_{2b\in \Pc_i}G(b)\] where each $\Pc_i$ is a partition of $2a_i$ into even positive integers, and at least one $\Pc_i$ is nontrivial (i.e. contains at least two elements). Suppose $q$ of these $r$ partitions are nontrivial, then $|\Pc_1|+\cdots +|\Pc_r|\geq r+q$. Thus we get an upper bound
\begin{equation}\label{upperbd2}\sum_{q=1}^r\binom{r}{q}\frac{r!}{(r+q)!}\bigg(\sup_a\sum_{\Pc}\frac{1}{G(a)}\prod_{2b\in \Pc}G(b)\bigg)^q,\end{equation} where $\Pc$ runs over all nontrivial even partitions of $2a$. As $\binom{r}{q}\frac{r!}{(r+q)!}\leq 1/q!$, it suffices to show that
\begin{equation}\label{binom}\sum_{\Pc}\frac{1}{G(a)}\prod_{2b\in \Pc}G(b)\leq\frac{1}{4},\end{equation} which would then imply that $(\ref{upperbd2})\leq 1/2$ and thus complete the induction.

The proof of (\ref{binom}) is easy. Note that $\log G$ is convex, so if $|\Pc|=s$, then
\[\frac{1}{G(a)}\prod_{2b\in \Pc}G(b)\leq\frac{((C_0)!)^{s-1}G(a-s+1)}{G(a)}\leq\min\bigg(\frac{(C_0!)^{s-1}}{(C_0(s-1))!},\frac{(C_0!)^{s-1}}{(C_0(a-s+1))^{C_0(s-1)}}\bigg)a^{2(s-1)}.\] Using also that $(m/3)^m\leq m!\leq m^m$, we can further bound this by 
\[\frac{1}{G(a)}\prod_{2b\in \Pc}G(b)\leq \frac{C_0^{C_0(s-1)}}{(C_0\max((s-1)/3,a-s+1))^{C_0(s-1)}}\leq (a/4)^{-C_0(s-1)}a^{2(s-1)}.\]The number of choices of $\Pc$ is at most $a^{s-1}$, so the left hand side of (\ref{binom}) is bounded by
\[\sum_{s=2}^aa^{s-1}(a/4)^{-C_0(s-1)}a^{2(s-1)}\leq\frac{1}{4}\] provided $C_0$ is big enough (we may assume $a\geq 5$, since the cases $a\leq 4$ are easily verified). This proves (\ref{binom}), and finishes the inductive proof of (\ref{induct}).

Finally, suppose the sum of elements in $\Oc$ that are at least $4$ is $q$, then $|\Oc|\geq(n-q)/2$. Thus by (\ref{induct}) we have
\[|\lambda(\Oc)|\leq\frac{(n/2)!}{((n-q)/2)!}(C_0!)^{(n-q)/2}(C_0q/2)!\leq (C_0!)^{n/2}(n/2)^{q/2}(C_0q/2)^{C_0q/2}\leq C_1^nn^{C_1q},\] which completes the proof.
\end{proof}
\begin{rem} Note that $\xi_{\Oc,(2,\cdots,2)}=\prod_{2b\in\Oc}b!$. In the Gaussian case, where $\mu_b=b!$, (\ref{recur}) implies that $\lambda(\Oc)=0$ for \emph{any} $\Oc\neq (2,\cdots,2)$, hence we reduce to the Isserlis' theorem.
\end{rem}
\subsection{Over-gardens}\label{overgar} Using Lemma \ref{isserlis2} instead of Isserlis' theorem, we can replace (\ref{corrgarden}) by \begin{equation}\label{corrgarden2}\Eb\bigg(\prod_{j=1}^{2R}(\Jc_{\Tc_j})_{k_j}^{\zeta_j}(t)\bigg)=\sum_{\Os}\lambda(\Oc)\Mc_{\Oc\Gc}(t,k_1,\cdots,k_{2R}).
\end{equation} Here $\Os$ is a set of \emph{over-pairings} of the leaves of the trees $\Tc_j\,(1\leq j\leq 2R)$, which is a partition of these leaves into subsets, such that the number of leaves with sign $+$ in each subset is equal to the number of leaves with sign $-$. The total number of these leaves is $2(m+R)$ where $m$ is the sum of scales of $\Tc_j\,(1\leq j\leq 2R)$, and $\Os$ induces an even partition of $2(m+R)$ which we denote by $\Oc$, such that $\Os\models\Oc$ in the sense of Lemma \ref{isserlisnew}. The coefficient $\lambda(\Oc)$ is as in Lemma \ref{isserlisnew}, and $\Oc\Gc$ is the set of these $2R$ trees together with the set of over-pairing $\Os$, which we refer to as an \emph{over-garden}. Note that we may also write $\Oc\Gc\models\Oc$ instead of $\Os\models\Oc$. Finally, like (\ref{defkg}) we have \begin{equation}\label{defkg2}\Mc_{\Oc\Gc}(t,k_1,\cdots,k_{2R})=\bigg(\frac{\delta}{2L^{d-1}}\bigg)^m\zeta^*(\Oc\Gc)\sum_\Is\epsilon_\Is\int_{\Ic}\prod_{\nf\in\Nc^*}e^{\pi i\zeta_\nf\cdot\delta L^2\Omega_\nf t_\nf}\,\mathrm{d}t_\nf\cdot\prod_{\lf\in\Lc^*}^{(+)}n_{\mathrm{in}}(k_\lf),
\end{equation} where all the objects are defined as in (\ref{defkg}), except that for the decoration $\Is$ we require that $k_\nf\,(\nf\in A)$ are all equal for each $A\in\Os$. 

Clearly an over-garden $\Oc\Gc$ can be turned into a garden $\Gc$ by dividing each over-pairing $A\in\Os$ into leaf pairs; below we will write $\Oc\Gc\sim\Gc$ for this. In this case $\Mc_{\Oc\Gc}$ is the same as $\Mc_\Gc$, except for the finitely many additional conditions of form $k_\lf=k_{\lf'}$ associated with the over-pairings structure of $\Oc\Gc$, which are added to the decoration $\Is$ in the summation (\ref{defkg2}). Now by (\ref{corrgarden2}) we have
\begin{equation}\label{corrgarden3}
\Eb\bigg(\prod_{j=1}^{2R}(\Jc_{m_j})_{k_j}^{\zeta_j}(t)\bigg)=\sum_{\Oc}\lambda(\Oc)\sum_{\Oc\Gc\models\Oc}\Mc_{\Oc\Gc}(t,k_1,\cdots,k_{2R}),
\end{equation} where $\Oc$ is an even partition of $2(m+R)$ with $m=m_1+\cdots+m_{2R}$, and $\Oc\Gc=(\Tc_1,\cdots,\Tc_{2R})\models\Oc$ is the over-garden of width $2R$ and signature $(\zeta_1,\cdots,\zeta_{2R})$ such that the scale of $\Tc_j$ is $m_j$ for $1\leq j\leq 2R$. Note that for fixed $\Gc$, the number of $\Oc\Gc$ such that $\Oc\Gc\models\Oc$ and $\Oc\Gc\sim\Gc$ depends only on $\Oc$; similarly, for fixed $\Oc\Gc\models\Oc$, the number of $\Gc$ such that $\Oc\Gc\sim\Gc$ also depends only on $\Oc$. We denote them by $\sigma_1(\Oc)$ and $\sigma_2(\Oc)$ respectively. Thus (\ref{corrgarden3}) can be rewritten as
\begin{equation}
\label{corrgarden4}\Eb\bigg(\prod_{j=1}^{2R}(\Jc_{m_j})_{k_j}^{\zeta_j}(t)\bigg)=\sum_{\Oc}\frac{\lambda(\Oc)}{\sigma_2(\Oc)}\sum_{\Gc}\sum_{\Oc\Gc\sim\Gc,\Oc\Gc\models\Oc}\Mc_{\Oc\Gc}(t,k_1,\cdots,k_{2R}),
\end{equation} where $\Gc$ satisfies the same condition as $\Oc\Gc$ but is a runs over gardens instead of over-gardens.

Now the study of (\ref{corrnew}) reduces to the study of the quantities $\Mc_{\Oc\Gc}$ for over-gardens $\Oc\Gc$. To this end we introduce the notion of regular over-gardens, and one simple linear algebra lemma.
\begin{df}\label{defregover} Define an over-garden $\Oc\Gc$ to be a \emph{regular} over-garden, if there exists $\Gc$ such that $\Oc\Gc\sim\Gc$, and (i) $\Gc$ is a regular multi-couple (Definition \ref{defgarden}), and (ii) for each leaf $\lf$ in each over-pairing $A\in\Os$ with $|A|\geq 4$, the tree $\Tc_j$ containing $\lf$ \emph{must} be a regular tree and $\lf$ must be its lone leaf.
\end{df}
\begin{lem}\label{linalg} Let $A\subset B$ be two sets of affine linear equations posed on $\Rb^n$, in terms of the coordinates $(x_1,\cdots,x_n)$. Let $\Ac\supset\Bc$ be the affine submanifold of $\Rb^n$ determined by equations in $A$ and $B$ respectively, assume $\Bc\neq\varnothing$, and denote $p=\mathrm{codim}_\Ac(\Bc)$. Let $1\leq r\leq n-1$.

For any fixed $x=(x_1,\cdots, x_r)\in\Rb^r$, consider the affine submanifold $\Ac_x\supset\Bc_x$ of $(x_{r+1},\cdots,x_n)\in\Rb^{n-r}$ determined by equations in $A$ and $B$ for $(x,x_{r+1},\cdots,x_n)$ respectively. Let $\Ac^\circ\supset\Bc^\circ$ be the affine manifold for $x$ such that $\Ac_x\neq\varnothing$ and $\Bc_x\neq\varnothing$ respectively. Then, if $\mathrm{codim}_{\Ac^\circ}(\Bc^\circ)=p^\circ$, then for any $x\in\Bc^\circ$ we have $\mathrm{codim}_{\Ac_x}(\Bc_x)=p-p^\circ$.
\end{lem}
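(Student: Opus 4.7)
The plan is to reduce Lemma \ref{linalg} to a standard rank--nullity computation for the coordinate projection $\pi:\Rb^n\to\Rb^r$ onto the first $r$ coordinates, and then assemble the four dimensions $\dim\Ac,\dim\Bc,\dim\Ac^\circ,\dim\Bc^\circ$ into the claimed codimension identity.

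First I would note that since the system $A$ consists of affine linear equations, $\Ac$ is an affine subspace of $\Rb^n$ (we may parametrize $\Ac=x_0+V$ for some linear subspace $V$). Similarly $\Bc=y_0+W$ with $W\subseteq V$, where $p=\dim V-\dim W$. By the very definition, $\Ac^\circ=\pi(\Ac)$ and $\Bc^\circ=\pi(\Bc)$; these are themselves affine subspaces with underlying linear subspaces $\pi(V)$ and $\pi(W)$, and for any $x\in\Ac^\circ$ (resp.\ $x\in\Bc^\circ$) the fibers $\Ac_x=\Ac\cap\pi^{-1}(x)$ and $\Bc_x=\Bc\cap\pi^{-1}(x)$ are affine subspaces with underlying linear subspaces $V\cap\ker\pi$ and $W\cap\ker\pi$, respectively. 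In particular $\dim\Ac_x$ does not depend on the point $x\in\Ac^\circ$, and likewise for $\Bc_x$, so both codimensions in the conclusion are well-defined.

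Next I would apply the rank--nullity theorem to the restrictions $\pi|_V$ and $\pi|_W$, which yields
\begin{equation*}
\dim\Ac=\dim\Ac^\circ+\dim\Ac_x,\qquad \dim\Bc=\dim\Bc^\circ+\dim\Bc_x,
\end{equation*}
the first equation valid for any $x\in\Ac^\circ$ and the second for any $x\in\Bc^\circ\subseteq\Ac^\circ$. Subtracting the second from the first, and using $p=\dim\Ac-\dim\Bc$ and $p^\circ=\dim\Ac^\circ-\dim\Bc^\circ$, one obtains
\begin{equation*}
p-p^\circ=\dim\Ac_x-\dim\Bc_x=\mathrm{codim}_{\Ac_x}(\Bc_x)
\end{equation*}
for every $x\in\Bc^\circ$, which is exactly the claim. (The final equality uses that $\Bc_x\subseteq\Ac_x$ are both nonempty affine subspaces for such $x$.)

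There is no real obstacle here: the content is entirely the constancy of fiber dimension for affine linear maps, applied twice (once for $\Ac$ and once for $\Bc$) and combined additively. The only point that deserves care is ensuring that $\dim\Ac_x$ is the same constant for every $x\in\Ac^\circ$ and in particular for $x\in\Bc^\circ$, which is immediate from the fact that $\Ac$ is a single affine subspace so that all nonempty fibers of $\pi|_\Ac$ are translates of $V\cap\ker\pi$; the analogous statement for $\Bc$ is used to handle the $\Bc_x$ term.
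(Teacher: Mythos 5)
Your proof is correct. The paper omits the proof of this lemma as elementary, and your rank--nullity argument (writing $\Ac^\circ=\pi(\Ac)$, $\Bc^\circ=\pi(\Bc)$, noting that all nonempty fibers of $\pi|_\Ac$ and $\pi|_\Bc$ are translates of $V\cap\ker\pi$ and $W\cap\ker\pi$, and subtracting the two dimension identities) is precisely the standard argument the authors have in mind.
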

\begin{proof} We omit the proof as it is elementary.
\end{proof}
Now we can state the main estimate for the non-Gaussian case.
\begin{prop}\label{nongaussprop} Fix $R$ and $(\zeta_1,\cdots,\zeta_{2R})$ and $(k_1,\cdots,k_{2R})$ and $(m_1,\cdots,m_{2R})$ as in Proposition \ref{mainprop}. Assume $R\leq 2 \log L/(\log\log L)^2$ and $m_j\leq N\,(1\leq j\leq 2R)$, and set $m:=m_1+\cdots +m_{2R}$. Consider the sum $\Is'$ as in (\ref{corrgarden4}), but we restrict to \emph{non-regular over-gardens} $\Oc\Gc$ in the summation. Then we have
\begin{equation}\label{nonregoverest}|\Is'|\leq (C^+\delta^{1/4})^mL^{-\nu}
\end{equation} uniformly in $t$ and $(k_1,\cdots,k_{2R})$.
\end{prop}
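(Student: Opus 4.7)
The plan is to follow the framework developed for Proposition \ref{mainprop} (mixed gardens) and adapt it to the setting of over-gardens. First I would rewrite the target sum, organizing by the partition $\Oc$ and the underlying garden $\Gc$, as
\begin{equation*}
\Is' = \sum_{\Oc} \frac{\lambda(\Oc)}{\sigma_2(\Oc)} \sum_{\Gc} \sum_{\substack{\Oc\Gc \sim \Gc, \ \Oc\Gc \models \Oc \\ \Oc\Gc \ \text{non-regular}}} \Mc_{\Oc\Gc}.
\end{equation*}
The key observation is that $\Mc_{\Oc\Gc}$ is obtained from $\Mc_{\Gc}$ by restricting the decoration sum so that $k_\lf$ takes a common value on each over-pairing $A \in \Os$. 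Non-regular over-gardens then split into three cases: (i) the underlying $\Gc$ is a mixed garden; (ii) $\Gc$ is a multi-couple, but at least one of its constituent couples is not regular; (iii) $\Gc$ is a regular multi-couple, but some over-pairing of size $\geq 4$ contains a leaf that is not a lone leaf of a regular tree component.

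In case (i), Proposition \ref{mainprop} directly controls $\sum_{\Gc} \Mc_\Gc$ by $(C^+ \delta^{1/4})^m L^{-\nu R}$ over mixed $\Gc$. The additional constraints imposed by any over-pairings only restrict the decoration sum further, so this bound persists (in fact improves) for each $\Mc_{\Oc\Gc}$. In case (ii), in the absence of bad over-pairings, $\Mc_\Gc$ factorizes over the $R$ constituent couples, and each non-regular couple inside contributes a decay of at least $L^{-2\nu}$ by the estimates packaged into Proposition \ref{propreg}, which after combinatorial control suffices for the claim.

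The principal technical work is case (iii). Here $\Gc = \{\Qc_1,\ldots,\Qc_R\}$ is a regular multi-couple, so $\Mc_\Gc$ would enjoy exact factorization by Proposition \ref{propreg}; the bad over-pairings break this factorization by linking leaves across regular couples, or linking non-lone-leaves within a couple. The plan is to encode each such bad over-pairing as extra linear constraints on the decoration variables, of the form $k_{\lf} - k_{\lf'} = 0$, that cannot be absorbed into the usual translation symmetries of the regular couples. By augmenting the molecule associated to $\Gc$ (in the sense of Definition \ref{gardenmol}) with these extra coincidence conditions, and rerunning the counting algorithm of Proposition \ref{gain} (along the lines of Remark \ref{countingrem}, but now treating the extra identifications as part of the $\mathtt{Ext}$ conditions), I expect to extract a gain of $L^{-\nu}$ per bad over-pairing, and an overall factor of $L^{-\nu q}$ where $q$ is the sum of parts of $\Oc$ that are $\geq 4$.

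The main obstacle is balancing this power gain in case (iii) against the combinatorial coefficient $|\lambda(\Oc)|/\sigma_2(\Oc)$. By Lemma \ref{isserlisnew}, $|\lambda(\Oc)| \leq C_1^{n} n^{C_1 q}$ with $n = 2(m+R)$. Under the hypothesis $R \leq 2\log L / (\log\log L)^2$ and $m_j \leq N = \lfloor (\log L)^4 \rfloor$, one has $n \leq (\log L)^{5}$ (say), so $n^{C_1 q} \leq L^{C_1 q (\log\log L)/\log L}$, which is dominated by the gain $L^{-\nu q}$ for $L$ large. The remaining combinatorial counting (number of $\Gc$, number of over-pairings compatible with $\Oc$, and number of markings) is at most $(C^+)^m$ by Propositions \ref{regcount}, \ref{skeletonrecover}, and \ref{reconstruction}. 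Collecting the three cases and summing yields the desired bound $(C^+ \delta^{1/4})^m L^{-\nu}$, which is precisely why the threshold on $R$ is slightly stronger in the non-Gaussian case than in the Gaussian case.
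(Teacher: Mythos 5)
Your overall decomposition (organizing by the partition $\Oc$ and the underlying garden $\Gc$) matches the paper's, and your case (iii) correctly identifies the heart of the matter, but there are genuine gaps. The most serious one is that in your cases (i) and (ii) you never confront the coefficient $\lambda(\Oc)$. The bound $(C^+\delta^{1/4})^mL^{-\nu R}$ from Proposition \ref{mainprop} is hopeless against $|\lambda(\Oc)|\leq C_1^n n^{C_1q}\approx(\log L)^{Cq}$ once $q$ is much larger than $R$ (and $q$ can be as large as $\sim m\sim R(\log L)^4$), so even when $\Gc$ is mixed, or is a multi-couple containing a non-regular couple, you must still extract a quantitative gain proportional to $q$ from the over-pairing identifications. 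This is what the paper does in its second case: it tracks the codimension $r\gtrsim\max(1,q_0-O(R))$ introduced by the extra equations $k_\lf=k_{\lf'}$ through all three stages of the reduction (removal of regular couples and trees, the irregular-chain cancellation, and the molecule counting), using Lemma \ref{linalg} at each stage to split $r=r_2+r_3+r_4$ and gain $L^{-cr_j}$ at the corresponding stage. Relatedly, your assertion that the extra constraints ``only restrict the decoration sum further, so this bound persists'' is not a proof for an oscillatory sum: the bound on $\sum_\Gc\Mc_\Gc$ relies on cancellations between congruent gardens, and one must check (as the paper does) that the over-pairing structure is transported consistently under congruence so that the same cancellation survives for $\Mc_{\Oc\Gc}$.

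The second gap is quantitative and is exactly where the hypothesis $R\leq 2\log L/(\log\log L)^2$ enters. You claim a gain of $L^{-\nu}$ per unit of $q$, but up to $O(R)$ of the extra equations can be linearly dependent on the $2R$ root equations already satisfied by every decoration, so the guaranteed gain is only $L^{-c\max(1,\,q_0-O(R))}$ for a small constant $c$. When $q_0\lesssim R$ the only guaranteed gain is a single factor $L^{-c}$, and one must then absorb $(\log L)^{Cq_0}\leq(\log L)^{O(R)}$ into $L^{\nu/2}$; this is precisely what forces the stronger threshold on $R$, whereas your balancing of $n^{C_1q}$ against $L^{-\nu q}$ never actually uses the hypothesis on $R$. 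Finally, in case (iii) the mechanism you propose --- augmenting the molecule and rerunning Proposition \ref{gain} --- does not apply: a regular multi-couple has trivial skeleton, so there is no molecule counting problem to augment. The paper instead works with the explicit affine parametrization $(x_\nf,y_\nf)$ of regular-couple decorations from Section 6 of \cite{DH}, showing via Lemma \ref{linalg} that $r$ independent extra equations force a degenerate (one- or zero-dimensional) summation in at least $\gtrsim r$ of the pairwise sums, each of which gains a power of $L$ over the generic $L^{2d-2}$ count.
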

\begin{proof} Let $\Oc\Gc$ be an over-garden and $\Oc\Gc\sim\Gc$. Since the expression $\Mc_{\Oc\Gc}$ in (\ref{defkg2}) is just the expression $\Mc_{\Gc}$ in (\ref{defkg}) with finitely many extra requirements of form $k_\lf=k_{\lf'}$ in the summation, it is clear that $\Mc_{\Oc\Gc}$ can at least be estimated in the same way as $\Mc_{\Gc}$ with no power loss. Also the number of $\Oc$ is at most $C^m$, and for fixed $\Gc$ and $\Oc$, the third sum in (\ref{corrgarden4}) contains $\sigma_1(\Oc)$ terms, where $\sigma_1(\Oc)$ has the same upper bound as $\lambda(\Oc)$ in Lemma \ref{isserlis2}.

Therefore, compared to the bounds for $\Mc_{\Gc}$ that we already know, we only have two tasks in proving the desired result: first, gain the extra power $L^{-\nu}$, and second, gain enough extra powers to cancel the factor $|\lambda(\Oc)|$ in case the latter is too large. Note also that if $\Oc$ is given and $\Gc$ and $\Gc'$ are congruent in the sense of Definition \ref{conggen}, then the over-gardens $\Oc\Gc\models\Oc,\,\Oc\Gc\sim\Gc$ are in one-to-one correspondence with the over-gardens $\Oc\Gc'\models\Oc,\,\Oc\Gc'\sim\Gc'$, and cancellations between the terms $\Mc_{\Oc\Gc}$ and $\Mc_{\Oc\Gc'}$ are the same as the cancellations between $\Mc_\Gc$ and $\Mc_{\Gc'}$ in Section \ref{irre}, up to minor modifications. As such, we can exploit the same cancellation for irregular chains in $\Gc$ as in Section \ref{irre} and \cite{DH}.

Now let us go over the process of studying $\Mc_\Gc$, and see what the extra conditions $k_\lf=k_{\lf'}$ may do at each step of this process. Below let $q_0$ be the number of independent extra equations $k_\lf=k_{\lf'}$, then $q_0\sim q$, where $q$ is the sum of elements in $\Oc$ that are at least $4$ as in Lemma \ref{isserlisnew}. In fact, we have
\[q=\sum_{4\leq a\in\Oc}a,\quad q_0=\sum_{4\leq a\in\Oc}\big(\frac{a}{2}-1\big),\] hence $2q_0\leq q\leq 4q_0$. We will keep track of the codimension introduced by these $q_0$ extra equations to the affine manifold of all possible decorations $(k_\lf)$ using Lemma \ref{linalg}.

(1) Assume that $\Gc$ is a regular multi-couple. In this case, we shall estimate the summation (together with the integration) in (\ref{defkg2}) using the method in Section 6 of \cite{DH} (note that here we have to treat all the regular couples in $\Gc$ together---instead of one at a time in Section 6 of \cite{DH}---because of the extra conditions linking different regular couples together, but this will cause minor changes to the proof). In particular, we define the variables $x_\nf$ and $y_\nf$ as in the proof of Proposition 6.7 of \cite{DH}. Note that there are $2R$ linear equations that any decoration of the leaves of $\Gc$ must satisfy (and such decoration of leaves uniquely determines the full decoration of $\Gc$); moreover the set of decorations satisfying these $2R$ equations is in affine bijection with the set of free variables $(x_\nf,y_\nf)$, see the proof of Proposition 6.7 of \cite{DH}.

Now, with the extra conditions, the dimension of the affine manifold of all possible decorations $(k_\lf)$ gets strictly lower, and the codimension $r$ introduced satisfies $r\gtrsim\max(1,q_0-O(R))$. In fact we have $r>0$ because at least one extra condition must take the form $k_\lf=k_{\lf'}$ where $\lf$ is not the lone leaf of a regular tree by Definition \ref{defregover}, and this equation will be independent of the $2R$ original equations stated above (since the only way for this extra condition to be dependent is for the two trees containing $\lf$ and $\lf'$ to be distinct and coupled, which would easily imply that they are two regular trees with lone leaves $\lf$ and $\lf'$). The lower bound $q_0-O(R)$ is because the number of independent extra equations is $q_0$, and we subtract $O(R)$ because some extra equations combined may imply some of the $2R$ original equations.

Using the affine bijection, we know that the $(x_\nf,y_\nf)$ variables must satisfy $r$ independent linear equations. Then, we proceed as in the proof of Proposition 6.1 in \cite{DH}, and sum over the $(x_\nf,y_\nf)$ variables one by one. At each step, suppose we are summing over the pair $(x_j,y_j)$, depending on the extra equations satisfied by these variables, we have one of three possibilities: (a) there is no restriction on $(x_j,y_j)$ and we are summing over all choices of $(x_j,y_j)$; (b) we are summing over $(x_j,y_j)$ that satisfies one linear equation (such as $x_j=\mathrm{const}$ or $y_j=\mathrm{const}$ or $x_j\pm y_j=\mathrm{const}$); (c) we are summing over $(x_j,y_j)$ that satisfies two linear equations, i.e. over only one point $(x_j^*,y_j^*)$. In either case the summation can be performed in the same way as in \cite{DH}, and in cases (b) and (c) we are gaining a power of $L$ in this summation, compared to the factor $L^{2d-2}$ in \cite{DH}. Also, by repeating Lemma \ref{linalg}, we know that case (b) or (c) must occur at least $\gtrsim r$ times during this process.

Therefore, putting altogether, with these extra conditions we can gain power $L^{-cr}$ for some small constant $c$ with $r\gtrsim\max(1,q-O(R))$, in the estimate of $\Mc_{\Oc\Gc}$ compared to $\Mc_{\Gc}$. This already provides the needed $L^{-\nu}$ gain. It also covers any possible loss due to $\lambda(\Oc)$ because $|\lambda(\Oc)|\leq C^mm^{Cq}\leq C^m(\log L)^{Cq_0}$ by Lemma \ref{isserlisnew}, hence
\[|\lambda(\Oc)|\leq C^m(\log L)^{O(R)}\cdot(\log L)^{C\max(1,q_0-O(R))}\leq C^mL^{\nu/2}\cdot (\log L)^{O(r)}\] by our choice $R\leq \log L/(\log\log L)^2$, which can then be covered by the $L^{-\nu}$ gain; also the various loss of $C^m$ is unimportant as they can be absorbed into $(C^+)^m$ in (\ref{nonregoverest}).

(2) Now we assume $\Gc$ is \emph{not} a regular multi-couple. Then the sum of $\Mc_\Gc$ already gains the power $L^{-\nu}$ in view of Proposition \ref{mainprop} and also Proposition 10.4 in \cite{DH}. It then suffices to cover the possible loss due to $\lambda(\Oc)$. We perform the reduction steps as in previous sections, and analyze the extra conditions appearing in each step. As in (1), the total codimension introduced by the $q_0$ extra equations is $r\gtrsim\max(1,q_0-O(R))$; we may assume $q_0\gg R$ because otherwise the loss $|\lambda(\Oc)|\leq C^mm^{Cq_0}\leq C^mL^{\nu/2}$ can already be covered by the guaranteed $L^{-\nu}$ gain. Therefore we now have $r\gtrsim q_0$.

\emph{Step 1}. We first remove the regular couples and regular trees to reduce $\Gc$ to its skeleton $\Gc_{sk}$ as in Proposition \ref{defskeleton}. In this process we are fixing all the remaining $k_\nf$ variables (which are determined  by the variables $k_{\lf_1}$ for leaves $\lf_1$ of $\Gc_{sk}$) and sum over the variables $k_{\lf_2}$, where $\lf_2$ are leaves of these regular couple and regular trees, similar to (1) above. By Lemma \ref{linalg}, there exists $r_1+r_2=r$, such that for any fixed $(k_{\lf_1})$, the codimension of the submanifold formed by the $(k_{\lf_2})$ variables is $r_2$, and the codimension of the submanifold formed by the $(k_{\lf_1})$ variables is $r_1$. By repeating the argument in (1) above, we can gain a power $L^{-cr_2}$ in summing over the $(k_{\lf_2})$ variables. Note that some extra equations satisfied by the $(k_{\lf_2})$ variables may be of form $k_{\lf_2}=\mathrm{const}$ instead of $k_{\lf_2}=k_{\lf_2'}$ as in (1), but this does not affect the proof.

\emph{Step 2}. We further remove the irregular chains from the skeleton $\Gc_{sk}$ and exploit the cancellation as in Section \ref{irre}. Note that if $\Gc$ is not a regular multi-couple, then \emph{any} $\Oc\Gc\sim\Gc$ must be non-regular in the sense of Definition \ref{defregover}, so for fixed $\Gc$, the summation in $\Oc\Gc$ we are studying here is still the same as the one in (\ref{corrgarden4}) even though we have made the restriction that $\Oc\Gc$ is non-regular. Thus, as said above, the cancellation for irregular chains works the same way in the current situation as in Section \ref{irre}. The extra conditions again lead to gain of powers in $L$. Like in \emph{Step 1}, we can write $r_1=r_3+r_4$, such that we can gain a power $L^{-cr_3}$ in the current step, and after removing the irregular chains, the remaining decoration (of the remaining garden $\Gc_{sk}^\#$, see Section \ref{summarysec}) still satisfies $r_4$ extra linear equations.

\emph{Step 3}. Finally we reduce the estimate of the remaining expression to the counting problem associated with the molecule formed from $\Gc_{sk}^\#$.  Here we only need to show that, if, in addition to the equations in the counting problem, the variables in question also satisfy $r_4$ additional independent linear equations, then we can improve the upper bound for the counting problem by a power $L^{-cr_4}$ with a small constant $c$.

To see this, we follow the procedure described in Section \ref{improvecount}, and in particular apply the algorithm introduced in Section 9.4 of \cite{DH}. In this process, where we fix some of the variables in each step, we keep track of the codimension, or the number $p$ of independent equations satisfied by the remaining variables. Initially we have $p\gtrsim r_4$, while in the end we have $p=0$. Therefore, there must be at least $\gtrsim r_4$ steps where $p$ strictly decreases. If this step is a good step in the sense of Section 9.3 of \cite{DH}, then we are gaining a constant power $L^{-\nu}$ here; even if it is a normal step, since $\Delta p<0$, by Lemma \ref{linalg}, in doing the counting estimate for this step, we can take into account an additional independent linear equation satisfied by the variables in consideration. For example, if we perform the step (3R-1) defined in Section 9.3.8 of \cite{DH}, then the corresponding counting problem we solve is (say)
\[\left\{
\begin{aligned}&a-b+c=\mathrm{const},\\
&|a|_\beta^2-|b|_\beta^2+|c|_\beta^2=\mathrm{const}+O(L^{-2}),\\
&a,b,c\in\Zb_L^d,\,|a|,|b|,|c|\lesssim 1,
\end{aligned}
\right.\] which has $O(L^{2d-2})$ solutions. However, if we add to this system another independent linear equation $\alpha a+\beta b+\gamma c=\mathrm{const}$, where $(\alpha,\beta,\gamma)$ is not a multiple of $(1,-1,1)$, then the number of solutions will be at most $L^d$ with $d<2d-2$. This leads to a power gain in each such step, so in total we can gain a power $L^{-cr_4}$ for some constant $c$.

After the above three steps, the total power we gain would be $L^{-c(r_2+r_3+r_4)}=L^{-cr}$, which is enough to cover the loss $C^mm^{Cq}$ from $\lambda(\Oc)$ because $r\gtrsim q_0\gtrsim q$. Therefore in any case we can cover the possible loss with an extra gain of $L^{-\nu}$, hence (\ref{nonregoverest}) holds. This completes the proof.
\end{proof}
With Proposition \ref{nongaussprop} we can now prove Theorem \ref{main2}.
\begin{proof}[Proof of Theorem \ref{main2}] We use (\ref{expand}) to expand $\Eb\big(\prod_{j=1}^{2R}a_{k_j}^{\zeta_j}(t)\big)$. The estimate for the remainder term $b$ can be done using arguments similar to Section \ref{remterm}, which we shall omit. Then, using also (\ref{corrgarden4}), we can write
\begin{equation}
\label{corrgarden5}
\Eb\bigg(\prod_{j=1}^{2R}a_{k_j}^{\zeta_j}(t)\bigg)=\sum_{\Gc,\Oc}\frac{\lambda(\Oc)}{\sigma_2(\Oc)}\sum_{\Oc\Gc\sim\Gc,\Oc\Gc\models\Oc}\Mc_{\Oc\Gc}(t,k_1,\cdots,k_{2R})+O(L^{-\nu}),
\end{equation} where $\Gc$ runs over all gardens of width $2R$ such that the scale of each tree is at most $N$, and $\Oc$ runs over all even partitions of $2(m+R)$ where $m$ is the scale of $\Gc$. By Proposition \ref{nongaussprop} and summing over all possible $m_j$ as in the proof of Theorem \ref{main} above, we see that with $R$ fixed and $L\to\infty$, the contribution of \emph{non-regular} over-gardens $\Oc\Gc$ decays like $L^{-\nu}$ in the limit. Thus, we only need to consider \emph{regular} over-gardens $\Oc\Gc$. Suppose $\Oc\Gc\sim\Gc$, then $\Gc$ is a regular multi-couple. Therefore, unless we can divide $\{1,\cdots,2R\}$ into pairs such that for each pair $\{i,j\}$ we have $k_i=k_j$ and $\zeta_i=-\zeta_j$, the contribution of regular over-gardens must vanish, in particular (\ref{approximation1}) is true. Now we only need to prove (\ref{approximation2new}).

For any regular couple $\Qc=(\Tc_1,\Tc_2)$, the tree $\Tc_1$ is a regular tree if and only if $\Tc_2$ is a regular tree (and hence the two lone leafs are paired). In this case we say $\Qc$ is \emph{tangential} (since the two trees only have one leaf-pair in common), otherwise say $\Qc$ is \emph{non-tangential}. Note that by the proof of Theorem \ref{main} in \cite{DH} we have
\begin{equation}\label{allasymp}\sum_{\Qc}\Mc_\Qc(t,t,k)=n(\delta t,k)+O(L^{-\nu}),\end{equation} where $\Qc$ runs over all regular couples with both trees having scale at most $N$. If we restrict to \emph{tangential} couples only, then the sum should be approximated by $n_0(\delta t,k)$ where $n_0$ is defined in (\ref{wke0}). The easiest way to see this is that, the expression $\Mc_\Qc(t,t,k)$ contains a factor $n_{\mathrm{in}}(k)$ if and only if $\Qc$ is tangential, because for any regular tree $\Tc$ with root $\rf$ and lone leaf $\lf$ we must have $k_\lf=k_\rf=k$. Thus, since the sum (\ref{allasymp}) over all couples exactly matches the Taylor expansion of $n(\delta t,k)$ (as shown in \cite{DH}), we know that the sum over tangential couples will exactly match the terms in the Taylor expansion \emph{that contain the factor $n_{\mathrm{in}}(k)$}. Due to the form of (\ref{wkenon}), it is easy to see that the sum of these terms is exactly $n_0(t,k)$, hence the result. Therefore we have
\begin{equation}\label{partasymp1}\sum_{\Qc\mathrm{\ \!tangential}}\Mc_\Qc(t,t,k)=n_0(\delta t,k)+O(L^{-\nu}),\end{equation}
\begin{equation}\label{partasymp2}\sum_{\Qc\textrm{\ \!non-tangential}}\Mc_\Qc(t,t,k)=n_+(\delta t,k)+O(L^{-\nu}).\end{equation}

We now return to the sum (\ref{corrgarden5}) over regular over-gardens $\Oc\Gc$. For (\ref{approximation2new}), we may rename $(k_1,\cdots,k_{2R})$ such that there are $2a_j$ copies of $k_i^*$ for $1\leq i\leq r$ (with half of them having sign $+$ and half having sign $-$) where the $k_i^*$ are all different and $a_1+\cdots+a_r=R$. For simplicity we will write $k_i$ instead of $k_i^*$ below. Clearly the $2a_i$ trees corresponding to the input variable $k_i$ must form $a_i$ couples in $\Gc$; assume $b_i$ of these $a_i$ couples are tangential and the rest are non-tangential, where $0\leq b_i\leq a_i$. Note also that for any $\Oc\Gc\sim\Gc$ we have $\Mc_{\Oc\Gc}=\Mc_\Gc$ because over-pairings can only happen at lone leaves of regular trees. Therefore, for fixed $(b_1,\cdots,b_r)$, we can calculate the contribution of regular over-gardens as
\begin{multline}\label{corrgarden6}\Is_{b_1,\cdots,b_r}=\prod_{i=1}^r\binom{a_i}{b_i}^2(a_i-b_i)!\bigg(\sum_{\Qc:\mathrm{non-tangential}}\Mc_\Qc(t,t,k_i)\bigg)^{a_i-b_i}\\\times\sum_{\Gc',\Oc'}\frac{\lambda(\Oc')\sigma_3(\Oc')}{\sigma_2(\Oc')}\Mc_{\Gc'}(t,k_1,\cdots,k_1,\cdots,k_r,\cdots,k_r).
\end{multline} Here $\Gc'$ runs over all gardens \emph{of width $2(b_1+\cdots+b_r)$} which are multi-couples formed by \emph{tangential couples} (and otherwise same as $\Gc$), $\Oc'$ runs over all \emph{even partitions of $2(b_1+\cdots+b_r)$} such that $\Oc'\preceq(2b_1,\cdots,2b_r)$, and $\sigma_3(\Oc')$ is the number of regular over-gardens $\Oc\Gc$ such that $\Oc\Gc\sim\Gc$ and $\Oc\Gc\models\Oc$ where $\Oc=(\Oc',2,\cdots,2)$. We can further reduce the inner sum in (\ref{corrgarden6}) to
\begin{multline}\label{corrgarden6.5}\sum_{\Gc',\Oc'}\frac{\lambda(\Oc')\sigma_3(\Oc')}{\sigma_2(\Oc')}\Mc_{\Gc'}(t,k_1,\cdots,k_1,\cdots,k_r,\cdots,k_r)=\prod_{i=1}^r\bigg(\sum_{\Qc\mathrm{\ \!tangential}}\Mc_\Qc(t,t,k_i)\bigg)^{b_i}\\\times\sum_{\Oc'}\frac{\lambda(\Oc')\sigma_3(\Oc')}{\sigma_2(\Oc')}(b_1)!\cdots(b_r)!.
\end{multline} Finally, in the above summation $\Oc'$ must be $\preceq (2b_1,\cdots,2b_r)$ otherwise $\sigma_3(\Oc')=0$, and by definitions and a double counting argument we can show that under this assumption we have
\[\xi_{(2b_1,\cdots,2b_r),\Oc'}=\sigma_3(\Oc')\frac{(b_1)!\cdots (b_r)!}{\sigma_2(\Oc')}.\] In fact, fix $\Os\models(2b_1,\cdots,2b_r)$. To construct $\Os'$ such that $\Os'\preceq\Os$ and $\Os'\models\Oc'$, we first divide each subset in $\Os$ into pairs (which has $(b_1)!\cdots (b_r)!$ choices), thus obtaining a garden $\Gc$, then construct $\Oc\Gc\sim\Gc$ and $\Oc\Gc\models\Oc$ (which has $\sigma_3(\Oc')$ choices) and form $\Os'$ accordingly. Note that each $\Os'$ is counted exactly $\sigma_2(\Oc')$ times (which is just the number of choices of $\Gc$ with fixed $\Oc\Gc$), hence the result.

Now we can reduce the last sum in (\ref{corrgarden6}) to
\[\sum_{\Oc'}\frac{\lambda(\Oc')\sigma_3(\Oc')}{\sigma_2(\Oc')}(b_1)!\cdots(b_r)!=\sum_{\Oc'\preceq (2b_1,\cdots,2b_r)}\lambda(\Oc')\xi_{(2b_1,\cdots,2b_r),\Oc'}=\prod_{i=1}^r\mu_{b_i}\] using (\ref{recur}). Putting altogether, using (\ref{partasymp1}) and (\ref{partasymp2}), and then summing over $b_i\,(1\leq i\leq r)$, we get
\[\Eb\bigg(\prod_{j=1}^{2R}a_{k_j}^{\zeta_j}(t)\bigg)=\sum_{0\leq b_i\leq a_i\,(1\leq i\leq r)}\prod_{i=1}^r\binom{a_i}{b_i}^2(a_i-b_i)!\mu_{b_i}(n_0(\delta t,k))^{b_i}(n_+(\delta t,k))^{a_i-b_i}+O(L^{-\nu})\] which is just (\ref{approximation2new}) given (\ref{defmur}). This completes the proof.
\end{proof}
\subsection{Evolution of density}\label{evodens} Finally we prove Theorem \ref{main3}. Note that if $\mu_r\leq C^r(2r)!$, then by (\ref{defmur}) for any $t$ we also have $\mu_r(t,k)\leq C^r(2r)!$ perhaps for some different $C$. Thus, convergence in law will be a consequence of the following lemma:
\begin{lem}\label{convlaw} Suppose $\{X_n\}$ are $\Rb^d$ valued random variables, such that for any multi-index $\mu$ the limit
\[A_\mu:=\lim_{n\to\infty}\Eb(X_n^\mu)\] exists and $|A_\mu|\leq C^{|\mu|}(|\mu|)!$. Then $\{X_n\}$ converges in law to a random variable $X$ satisfying $\Eb(X^\mu)=A_\mu$ for any multi-index $\mu$. Moreover, the law with these given moments is unique.
\end{lem}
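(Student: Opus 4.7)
\textbf{Proof plan for Lemma \ref{convlaw}.} The statement combines two ingredients: determinacy of the multi-dimensional moment problem under the growth hypothesis, and upgrading convergence of moments to convergence in law. The plan is first to establish tightness, then extract subsequential weak limits and identify their moments, and finally to conclude uniqueness of the limiting law via analyticity of its characteristic function.

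First, I would show tightness of $\{X_n\}$. Since each $A_\mu$ is finite and $\Eb(X_n^\mu)\to A_\mu$, the sequence $\sup_n\Eb|X_n|^2\le\sup_n\sum_{i=1}^d\Eb(X_n^{2e_i})<\infty$ is uniformly bounded, so Chebyshev's inequality yields tightness. Prokhorov's theorem then gives that every subsequence admits a further weakly convergent subsequence $X_{n_k}\Rightarrow Y$ for some $\Rb^d$-valued random variable $Y$. Next, for any multi-index $\mu$, the random variables $\{X_{n_k}^\mu\}$ are uniformly integrable: since $\sup_n\Eb|X_n|^{2|\mu|}\lesssim\sup_n\sum_{|\nu|=|\mu|}\Eb(X_n^{2\nu})<\infty$ by hypothesis, $\sup_n\Eb|X_{n_k}^\mu|^2<\infty$, which implies uniform integrability. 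Hence weak convergence upgrades to $\Eb(Y^\mu)=\lim_k\Eb(X_{n_k}^\mu)=A_\mu$.

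The main step is to show that any random variable $Y$ with moments $\Eb(Y^\mu)=A_\mu$ is unique in law. The growth bound $|A_\mu|\le C^{|\mu|}(|\mu|)!$ gives for any $t\in\Rb^d$ with $|t|<(2dC)^{-1}$ the absolute convergence
\[
\sum_{\mu}\frac{|t^\mu|}{\mu!}|A_\mu|\le\sum_{n\ge 0}\frac{(d|t|)^n}{n!}\cdot C^n n!<\infty,
\]
so the moment generating function $M_Y(t)=\Eb(e^{t\cdot Y})$ is finite in a neighborhood of the origin. In particular, $Y$ has exponential tails, and the characteristic function $\phi_Y(t)=\Eb(e^{it\cdot Y})$ extends to a function holomorphic in a complex strip $\{z\in\Cb^d:|\Im z|<(2dC)^{-1}\}$, with Taylor expansion $\phi_Y(z)=\sum_\mu i^{|\mu|}A_\mu z^\mu/\mu!$ at the origin determined entirely by the $A_\mu$. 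Thus any two random variables with these moments share the same characteristic function near $0$; by the identity theorem for real-analytic functions (restricting to the real axis), they share the same characteristic function on all of $\Rb^d$, and hence have the same law by Fourier inversion.

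Combining these steps: any subsequential weak limit of $\{X_n\}$ has law equal to this unique $X$, so the full sequence converges in law to $X$. The main obstacle is the uniqueness step, which rests on the (classical, but nontrivial) fact that the growth condition $|A_\mu|\le C^{|\mu|}(|\mu|)!$ forces determinacy of the multidimensional moment problem; the route via analyticity of $\phi_Y$ sketched above avoids invoking the somewhat delicate multidimensional Carleman condition and is self-contained.
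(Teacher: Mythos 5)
Your proposal is correct and follows essentially the same route as the paper: tightness from the uniformly bounded second moments, subsequential weak limits whose moments are identified via uniform integrability, and uniqueness of the limiting law from the fact that the growth bound $|A_\mu|\leq C^{|\mu|}(|\mu|)!$ makes the characteristic function holomorphic in a complex strip, hence determined by its Taylor coefficients at the origin. The only extra care worth noting (which both you and the paper pass over lightly) is that finiteness of $\Eb(e^{\varepsilon|Y|})$ should be checked via the \emph{absolute} moments $\Eb|Y|^{2n}=\sum_{|\nu|=n}\binom{n}{\nu}A_{2\nu}$ rather than directly from absolute convergence of the formal moment series, but this is immediate from the same bound.
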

\begin{proof} First the assumption implies that $\Eb|X_n|^2$ is uniformly bounded in $n$, thus the sequence of laws of $X_n$ is tight. For any subsequence $X_{n_k}$ we then have a subsequence $X_{n_{k_\ell}}$ that converges in law to (say) some random variable $X$. For any $\mu$, since $\Eb|X_n|^{2|\mu|}$ are bounded in $n$ it is easy to see that $\Eb(X^\mu)=\lim \Eb(X_n^\mu)=A_\mu$. Therefore, it suffices to prove that the law of the random variables $X$ with $\Eb(X^\mu)=A_\mu$ is unique. This is true because, since $|A_\mu|\leq C^{|\mu|}(|\mu|)!$, we then have $\Eb(e^{\varepsilon|X|})<\infty$ for small enough $\varepsilon$, hence $f(\xi)=\Eb(e^{i\xi\cdot X})$ is well-defined and holomorphic in the region $|\mathrm{Im}\xi|<\varepsilon$. The moments $\Eb(X^\mu)$ uniquely determines the Taylor expansion of $f(\xi)$ at $\xi=0$, hence uniquely determines the value of $f(\xi)$ in a neighborhood of $0$---and consequently in the whole region $|\mathrm{Im}\xi|<\varepsilon$ by analyticity. In particular the moments uniquely determine the value of $f(\xi)$ on $\Rb$, which is the characteristic function of $X$. Thus the law of $X$ is unique, as desired.
\end{proof}
Now, for any $t\in[0,\delta]$ and $k\in\Zb_L^d$, consider the unique radial density $\rho=\rho_k(t,v)$ (where $v\in\Cb$ also viewed as an $\Rb^2$ vector) such that 
\begin{equation}\label{dens}\int_\Cb(v_1^2+v_2^2)^r\rho_k(t,v)\,\mathrm{d}v_1\mathrm{d}v_2=\mu_r(t,k)=\sum_{p=0}^r\binom{r}{p}^2(r-p)!\mu_p\cdot(n_0(t,k))^p(n_+(t,k))^{r-p},\end{equation} then by Theorem \ref{main2} and Lemma \ref{convlaw} we have that convergence in law described by (\ref{evolpdf4}) and (\ref{evolpdf5}) is true. Thus to prove Theorem \ref{main3} it suffices to show that $\rho_k(t,v)$ solves the equation (\ref{evolpdf3}). The initial data $\rho_k(0)$ is verified by definition since the right hand side of (\ref{dens}) equals $\mu_r n_{\mathrm{in}}(k)^r$ when $t=0$. Now, by approximating from below, we may assume $\{\mu_p\}$ is bounded, and consider
\[L_k(t,\xi)=\int_\Cb e^{i\xi(v_1^2+v_2^2)}\rho_k(t,v)\,\mathrm{d}v_1\mathrm{d}v_2=\sum_{0\leq p\leq r}\frac{(i\xi)^r}{p!}\binom{r}{p}\mu_p\cdot(n_0(t,k))^p(n_+(t,k))^{r-p}.\] Taking time derivative and calculating using (\ref{wke}), (\ref{wkenon}), (\ref{wke0}) and (\ref{wkenon0}) yields
\begin{multline}\partial_tL_k=\sum_{0\leq p\leq r}\frac{(i\xi)^r}{p!}\binom{r}{p}\mu_p\cdot\big\{p(n_0(t,k))^{p}(n_+(t,k))^{r-p}\gamma_k(t)\\+(r-p)(n_0(t,k))^p(n_+(t,k))^{r-p-1}(\sigma_k(t)+\gamma_k(t)n_+(t,k))\big\}\end{multline} where $\sigma_k(t)$ and $\gamma_k(t)$ are defined in (\ref{evolpdf1}) and (\ref{evolpdf2}). This simplifies to
\begin{multline}\partial_tL_k=\gamma_k(t)\cdot\sum_{0\leq p\leq r}r\cdot\frac{(i\xi)^r}{p!}\binom{r}{p}\mu_p\cdot(n_0(t,k))^p(n_+(t,k))^{r-p}\\+\sigma_k(t)\cdot\sum_{0\leq p\leq r}(r-p)\cdot\frac{(i\xi)^r}{p!}\binom{r}{p}\mu_p\cdot(n_0(t,k))^p(n_+(t,k))^{r-p-1}.\end{multline} The first sum on the right hand side equals $\xi\partial_\xi L_k$, and the second sum equals
\[\sum_{0\leq p\leq r}(r+1)\cdot\frac{(i\xi)^{r+1}}{p!}\binom{r}{p}\mu_p\cdot(n_0(t,k))^p(n_+(t,k))^{r-p}\] by changing $r$ to $r+1$, which then equals $(i\xi+i\xi^2\partial_\xi)L_k$. Therefore we have
\[\partial_tL_k=\gamma_k(t)\cdot \xi\partial_\xi L_k+\sigma_k(t)\cdot(i\xi+i\xi^2\partial_\xi)L_k.\] Finally, by taking the inverse Fourier transform and switching between Cartesian and polar coordinates, we can verify that the density $\rho_k(t,v)$ solves (\ref{evolpdf3}). This completes the proof of Theorem \ref{main3}.
\section{Proof of Theorem \ref{main4}}\label{sec:wkh} In this last section we prove Theorem \ref{main4}. The proof is of ``soft" nature and has a different flavor from the other parts of this paper. The main idea, already hinted at in Section \ref{sec:wkh0}, is to represent $n_r$ as an average of tensor products for which one can apply Theorem \ref{main}. This is demonstrated in the following lemma, which is a variant of the classical Hewitt-Savage theorem.
\begin{lem}[a variant of Hewitt-Savage]
\label{hewsav} Suppose $(n_r)_{\mathrm{in}}$ is as in the statement of Theorem \ref{main4}, satisfies (\ref{bounded}), and is admissible in the sense of (\ref{admissible}). Recall the set $\Ac$ defined in (\ref{measurezeta}). Then, there exists a unique probability measure $\zeta$ on $\Ac$ such that for any $r$ and any distinct $(k_1,\cdots,k_r)$ we have
\begin{equation}\label{hewsav:eq1}(n_r)_{\mathrm{in}}(k_1,\cdots,k_r)=\int_{\Ac}m(k_1)\cdots m(k_r)\,\mathrm{d}\zeta(m).
\end{equation}
\end{lem}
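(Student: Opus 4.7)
The plan is to recognize the admissible family $(n_r)_{\mathrm{in}}$ as the mixed moments of an exchangeable $\Rb^d$-valued sequence, apply the Hewitt-Savage theorem to produce a candidate measure $\tilde\zeta$, and then upgrade the regularity of its support using the $\Sf^{40d;r}$ bounds. The main work will be in this last upgrade.

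First, I would normalize by setting $\mu_r := (n_r)_{\mathrm{in}}/\Xf^r$. The admissibility condition (\ref{admissible}) together with the symmetry of $(n_r)_{\mathrm{in}}$ makes each $\mu_r$ a symmetric probability density on $(\Rb^d)^r$ and ensures Kolmogorov compatibility $\int \mu_{r+1}(k[r],k)\,dk = \mu_r(k[r])$. Kolmogorov extension then produces a unique exchangeable Borel probability measure $\Theta$ on the Polish space $(\Rb^d)^\Nb$ with these marginals, and the Hewitt-Savage theorem yields a unique probability measure $\tilde\zeta$ on the space $\Mc$ of Borel probability measures on $\Rb^d$ (equipped with the weak-$*$ Borel structure) such that $\Theta = \int_\Mc P^{\otimes\Nb}\,d\tilde\zeta(P)$, which restricts to $\mu_r = \int_\Mc P^{\otimes r}\,d\tilde\zeta(P)$ for each $r$.

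Next, I would show that $\tilde\zeta$-almost every $P$ has an $L^2$ density $P_d$, with $m_P:=\Xf P_d\in\Sf^{40d}$ of norm $\lesssim C_1$. For $\phi\in C_c^\infty(\Rb^d)$ and multi-indices $|\alpha|,|\beta|\leq 40d$, set $L_P(\psi):=\int\psi\,dP$ and compute
\begin{equation*}
\int_\Mc \big(L_P(k^\alpha\partial^\beta\phi)\big)^{2r}\,d\tilde\zeta(P) = \int_{(\Rb^d)^{2r}} \prod_{j=1}^{2r}\big(k_j^\alpha\,\partial_{k_j}^\beta\phi(k_j)\big)\,\mu_{2r}(k[2r])\,dk[2r].
\end{equation*}
Integrating by parts in each $\partial_{k_j}^\beta$ (the total sign $(-1)^{2r|\beta|}$ is $+1$), expanding with Leibniz's rule, and applying Cauchy-Schwarz bound the right-hand side by $(K(C_1/\Xf)\|\phi\|_{L^2})^{2r}$ with $K$ depending only on $(d,\alpha,\beta)$, using $\|n_{2r}\|_{\Sf^{40d;2r}}\leq C_1^{2r}$. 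Chebyshev's inequality together with $r\to\infty$ then gives, $\tilde\zeta$-almost surely, $|L_P(k^\alpha\partial^\beta\phi)|\leq K(C_1/\Xf)\|\phi\|_{L^2}$. Exhausting a countable $L^2$-dense family of test $\phi$ and the finitely many admissible $(\alpha,\beta)$ keeps the exceptional set null; by $L^2$-density and duality, $P$ has an $L^2$ density and the weighted distributional derivatives $\partial^\beta(k^\alpha P_d)$ lie in $L^2$ with norm $\lesssim C_1/\Xf$. A Leibniz induction on $|\alpha|$ then converts these into $\|k^\alpha\partial^\beta m_P\|_{L^2}\lesssim C_1$ for all $|\alpha|,|\beta|\leq 40d$.

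Finally, I would define $\zeta$ as the pushforward of $\tilde\zeta$ under $P\mapsto m_P$; after absorbing the absolute multiplicative constant from the previous step into $C_1$ (equivalently, enlarging $\Ac$, which is harmless since Theorem \ref{main4} only requires $C_1$ to be sufficiently large), $\zeta$ is supported in $\Ac$. The identity (\ref{hewsav:eq1}) holds as an identity of measures by construction; both sides are continuous functions of $k[r]$ (the left by Sobolev embedding from the $\Sf^{40d;r}$ bound, the right by $\tilde\zeta$-a.s.\ Sobolev regularity of $m_P$ and dominated convergence), so it also holds pointwise. Uniqueness of $\zeta$ is inherited from uniqueness in the Hewitt-Savage theorem. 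The main obstacle is precisely the regularity step: the exponential growth $C_1^r$ of the joint $L^2$-Sobolev norms is exactly what is needed for the $2r$-th moment / Chebyshev argument to survive as $r\to\infty$, and the integration-by-parts / Leibniz constants are bounded uniformly in $r$ so they can be absorbed without cost.
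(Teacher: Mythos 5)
Your proof is correct and follows essentially the same route as the paper: Kolmogorov extension plus Hewitt--Savage to obtain a mixing measure on the space of all probability measures, then a tensor-product moment/Chebyshev argument with $r\to\infty$ using $\|(n_r)_{\mathrm{in}}\|_{\Sf^{40d;r}}\le C_1^r$ to show the measure is supported on $\Ac$. The only (cosmetic) difference is that the paper tests against $\big(\partial_k^\beta(k^\alpha\varphi)\big)^{\otimes r}$, which after integration by parts pairs exactly with the $\Sf^{40d;r}$ norm and gives the constant $C_1$ itself rather than $KC_1$, so no Leibniz expansion or enlargement of $\Ac$ is needed.
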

\begin{proof} For simplicity we will write $(n_r)_{\mathrm{in}}$ just as $n_r$. We may assume $\Xf=1$ (otherwise simply replace $n_r$ by $\Xf^{-r}n_r$ and $C_1$ by $\Xf^{-1}C_1$). Then, each $n_r$ is the density of the joint distribution of some $r$ random variables valued in $\Rb^d$, and $n_{r-1}$ is a marginal of $n_r$. By Kolmogorov extension theorem, we can find an infinite sequence of $\Rb^d$-valued random variables $(X_1,X_2,\cdots)$ such that $n_r$ is the density of the joint distribution of $(X_1,\cdots,X_r)$. By symmetry of $n_r$, these random variables $X_j$ are exchangeable, so by Hewitt-Savage theorem \cite{HS}, there exists a unique probability measure $\zeta$ on the set $\Mc$ of \emph{all probability measures} $m$ on $\Rb^d$ such that (\ref{hewsav:eq1}) is true with $\Ac$ replaced by $\Mc$.

Now it remains to prove that $\zeta$ is supported on $\Ac$. We will apply the beautiful argument of Rosenzweig-Staffilani in Section 5 of \cite{RosStaff}, which goes back to the work of Spohn \cite{Spohn}. Fix a Schwarz function $\varphi$ on $\Rb^d$ and indices $\alpha,\beta$ with $|\alpha|,|\beta|\leq 40d$; for any \emph{even} integer $r$, we may test both sides of (\ref{hewsav:eq1}) by the tensor product $(\partial_k^\beta k^\alpha \varphi)^{\otimes r}$ to get
\[\int_\Mc(m,\partial_k^\beta k^\alpha\varphi)^r\,\mathrm{d}\zeta(m)=\int_{(\Rb^d)^r}n_r(k_1,\cdots,k_r)\prod_{j=1}^rk_j^\alpha\partial_{k_j}^\beta\varphi(k_j)\,\mathrm{d}k_1\cdots\mathrm{d}k_r,\] where $(m,\partial_k^\beta k^\alpha\varphi)$ is the integral of $\partial_k^\beta k^\alpha\varphi$ against the measure $m$, in the usual distributional sense. Recall that $\|n_r\|_{\Sf^{40d;r}}\leq C_1^r$ from (\ref{bounded}), so the right hand side is bounded by
\[\int_{(\Rb^d)^r}n_r(k_1,\cdots,k_r)\prod_{j=1}^r\partial_{k_j}^\beta k_j^\alpha\varphi(k_j)\,\mathrm{d}k_1\cdots\mathrm{d}k_r\leq \|n_r\|_{\Sf^{40d;r}}\cdot\|\varphi\|_{L^2}^r\leq (C_1\|\varphi\|_{L^2})^r.\] For any $C_2>C_1$, by Chebyshev, this implies that
\[\zeta\big(\big\{m:|(m,\partial_k^\beta k^\alpha\varphi)|\geq C_2\|\varphi\|_{L^2}\big\}\big)\leq (C_2\|\varphi\|_{L^2})^{-r}\int_\Mc(m,\partial_k^\beta k^\alpha\varphi)^r\,\mathrm{d}\zeta(m)\leq (C_1/C_2)^{r}.\] Since $r$ is arbitrary, we conclude that
\begin{equation}\label{bdd}|(m,\partial_k^\beta k^\alpha\varphi)|\leq C_2\|\varphi\|_{L^2}\end{equation} holds $\zeta$-almost surely for any fixed $\varphi$, fixed $(\alpha,\beta)$, and fixed $C_2>C_1$. By choosing $\varphi$ in a countable dense subset of Schwartz space, enumerating finitely many possible $(\alpha,\beta)$, and choosing a sequence $C_2=(1+\varepsilon)C_1$, we know that $\zeta$-almost surely in $m$, (\ref{bdd}) actually holds for \emph{all} $(\varphi,\alpha,\beta)$, and with $C_2$ replaced by $C_1$. By Riesz representation theorem, this implies that \[\|k^\alpha\partial_k^\beta m\|_{L^2}\leq C_1,\quad \forall |\alpha|,|\beta|\leq 40d\] holds $\zeta$-almost surely. By definition this means that $m\in\Ac$ because $m$ is a probability measure and $\Xf=1$. Therefore we get $\zeta(\Mc\backslash \Ac)=0$, and hence we can define the measure $\zeta$ on $\Ac$ such that (\ref{hewsav:eq1}) is true.
\end{proof}

We are now ready to prove Theorem \ref{main4}.
\begin{proof}[Proof of Theorem \ref{main4}] First, if $(n_r)_{\mathrm{in}}$ is hybrid then it must be admissible. In fact, by the definition of hybrid initial data we have
\begin{equation}\label{hybridint}\int_{\Ac}m(k_1)\cdots m(k_r)\,\mathrm{d}\zeta(m)=(n_r)_{\mathrm{in}}(k_1,\cdots,k_r)\end{equation} for any distinct $k_j\in\Zb_L^d\,(1\leq j\leq r)$. Since $m$ and $(n_r)_{\mathrm{in}}$ are all continuous functions, by taking suitable limits and letting $L\to\infty$ we know that (\ref{hybridint}) is actually true for all $k_j\in\Rb^d\,(1\leq j\leq r)$. Then we simply integrate (\ref{hybridint}) in $k_r$, using the integral condition in the definition of $\Ac$, to get (\ref{admissible}).

From now on we shall assume $(n_r)_{\mathrm{in}}$ is admissible, then by Lemma \ref{hewsav}, we can find a unique measure $\zeta$ such that (\ref{hybridint}) holds. Consider the hybrid data $u_{\mathrm{in}}$ described in the statement of Theorem \ref{main4}. We can view it as obtained by first randomly selecting $m\in\Ac$ with law given by $\zeta$, then working in the same setting as in Theorems \ref{main0} and \ref{main}--\ref{main3} with the particular choice $n_{\mathrm{in}}=m$. Since $m\in\Ac$, by Theorem \ref{main0} we know that the conditional probability $\Pb(E|m)\geq 1-L^{-A}$ for any $m\in\Ac$, where $E$ is the event that (\ref{nls}) has a smooth solution up to time $T$. This implies that
\[\Pb(E)=\int_{\Ac}\Pb(E|m)\,\mathrm{d}\zeta(m)\geq 1-L^{-A}.\]

Finally we prove (\ref{approximationwkh}). As is clear from the proof of Theorem \ref{main2}, the remainder in (\ref{approximation2new}) is in fact $O(L^{-\nu})$ for some absolute constant $\nu>0$, where the implicit constant in $O(\cdot)$ may depend on $r$, but is uniform in $(t,k_j)$ and $n_{\mathrm{in}}=m$, as long as $m\in\Ac$. Thus, by Theorem \ref{main2}, for the hybrid data $u_{\mathrm{in}}$, for $t\in[0,T]$ we have that
\[\Eb\bigg(\prod_{j=1}^r|\widehat{u}(t,k_j)|^2\bigg)=\int_{\Ac}\Eb\bigg(\prod_{j=1}^r|\widehat{u}(t,k_j)|^2\,\bigg|\,m\bigg)\,\mathrm{d}\zeta(m)=\int_{\Ac}\prod_{j=1}^r\widetilde{m}\bigg(\frac{t}{T_{\mathrm{kin}}},k_j\bigg)\,\mathrm{d}\zeta(m)+O(L^{-\nu})\] where $\widetilde{m}(t,k)$ is the solution to (\ref{wke}) with initial data $\widetilde{m}(0,k)=m(k)$. It remains to show that
\[n_r(t,k_1,\cdots,k_r)=\int_{\Ac}\prod_{j=1}^r\widetilde{m}(t,k_j)\,\mathrm{d}\zeta(m).\] In fact, for any $m\in \Ac$, by definition $\widetilde{m}$ is the solution to (\ref{wke}) with initial data $m$, so by direct calculation we see that $(\widetilde{m})^{\otimes r}$ is a solution to (\ref{WKH}) with initial data $m^{\otimes r}$. Since (\ref{WKH}) is linear, we know that
\begin{equation}\label{acgsolution}\int_{\Ac}(\widetilde{m})^{\otimes r}\,\mathrm{d}\zeta(m)\end{equation} is a solution to (\ref{WKH}) with initial data
\[\int_{\Ac}m^{\otimes r}\,\mathrm{d}\zeta(m)=(n_r)_{\mathrm{in}},\] due to (\ref{hybridint}). Moreover, since $m\in\Ac$, for short time $t\in[0,\delta]$, the solution (\ref{acgsolution}) clearly belongs to the class $C_t^0\mathfrak{L}_{s,\epsilon}^\infty$ defined in \cite{RosStaff}; since short time uniqueness is proved in \cite{RosStaff} for solutions in this class, we know that (\ref{acgsolution}) has to equal $n_r$, which is the unique solution to (\ref{WKH}) in this class with initial data $(n_r)_{\mathrm{in}}$. The admissibility condition (\ref{admissible}) follows from the fact that $n_r$ equals (\ref{acgsolution}) which is an average of factorized solutions for which admissibility is clearly true {(Here we use that the \eqref{wke} conserves the total mass $\int n(t,\xi)\, d\xi$)}. This completes the proof.
\end{proof}

\end{document}